\tikzset{
     block/.style={rectangle, draw, fill=red!40, text width=6em,
                   text centered, rounded corners, minimum height=3em},
     arrow/.style={-{Stealth[]}}
     }
\def\undertilde#1{\mathord{\vtop{\ialign{##\crcr
$\hfil\displaystyle{#1}\hfil$\crcr\noalign{\kern1.5pt\nointerlineskip}
$\hfil\tilde{}\hfil$\crcr\noalign{\kern1.5pt}}}}}
\def\undertilde#1{{\baselineskip=0pt\vtop
  {\hbox{$#1$}\hbox{$\scriptscriptstyle\sim$}}}{}}
\newcommand{\utilde}{\undertilde}
\renewcommand{\gg}{\gamma}
\newcommand{\gG}{\Gamma}
\newcommand{\bR}{{\mathbb{R}}}
\newcommand{\rest}{\restriction}
\newcommand{\card}[1]{{\vert #1 \vert} }
\newcommand{\forces}{\Vdash}
\renewcommand{\models}{\vDash}
\newcommand{\powerset}{{\wp}}
\newcommand{\dom}{{\rm dom}}
\newcommand{\rge}{{\rm rge}}
\newcommand{\cp}{{\rm crit }}
\newcommand{\cf}{{\rm cf}}
\newcommand{\lh}{{\rm lh}}
\newtheorem{theorem}{Theorem}[section]
\newtheorem{definition}[theorem]{Definition}
\newtheorem{lemma}[theorem]{Lemma}
\newtheorem{corollary}[theorem]{Corollary}
\newtheorem{claim}[theorem]{Claim}
\newtheorem{subclaim}[theorem]{Subclaim}
\newtheorem{conjecture}[theorem]{Conjecture}
\newtheorem{remark}[theorem]{Remark}
\newtheorem{notation}[theorem]{Notation}
\newtheorem{terminology}[theorem]{Terminology}
\numberwithin{figure}{section}
\newcommand{\rcon}[1]{Conjecture~\ref{#1}}
\newcommand{\rcl}[1]{Claim~\ref{#1}}
\newcommand{\rprop}[1]{Proposition~\ref{#1}}
\newcommand{\rthm}[1]{Theorem~\ref{#1}}
\newcommand{\rlem}[1]{Lemma~\ref{#1}}
\newcommand{\rcor}[1]{Corollary~\ref{#1}}
\newcommand{\rdef}[1]{Definition~\ref{#1}}
\newcommand{\rsec}[1]{Section~\ref{#1}}
\newcommand{\rrem}[1]{Remark~\ref{#1}}
\def\core{{\sf{core}}}
\def\inseg{\trianglelefteq}
\def\k{\kappa}
\def\a{\alpha}
\def\b{\beta}
\def\d{\delta}
\def\l{\lambda}
\def\cop{{\sf{cop}}}
\def\P{{\mathcal{P} }}
\def\W{{\mathcal{W} }}
\def\Q{{\mathcal{ Q}}}
\def\mH{{\mathcal{ H}}}
\def\K{{\mathcal{ K}}}
\def\A{{\mathcal{A}}}
\def\R{{\mathcal R}}
\def\H{{\rm{HOD}}}
\def\M{{\mathcal{M}}}
\def\N{{\mathcal{N}}}
\def\T {{\mathcal{T}}}
\def\U{{\mathcal{U}}}
\def\S{{\mathcal{S}}}
\def\V{{\mathcal{V}}}
\def\X{{\mathcal{X}}}
\def\Y{{\mathcal{Y}}}
\def\Z{{\mathcal{Z}}}
\def\B{{\mathcal{B}}}
\def\card#1{\left|#1\right|}
\def\iff{\mathrel{\leftrightarrow}}
\def\and{\mathrel{\kern1pt\&\kern1pt}}
\def\inseg{\triangleleft}
\def\insegeq{\trianglelefteq}
\def\<#1>{\langle\,#1\,\rangle}
\begin{document}

\title[Generic generators]{Generic generators}

\author{Grigor Sargsyan} \address{Grigor Sargsyan, IMPAN, Antoniego Abrahama 18, 81-825 Sopot, Poland.}
\email{gsargsyan@impan.pl} 


\begin{abstract}
The goal of this paper is to present an approach to $\sf{Hod\ Pair\ Capturing}$ (${\sf{HPC}}$), which was introduced in \cite[Definition 1.7.1]{SteelCom}. ${\sf{HPC}}$ is one of the main open problems of descriptive inner model theory (see \cite{DIMT}). More specifically, we introduce generic generators, use them to define two principles, the ${\sf{Direct\ Limit\ Independence}}$ (see \rdef{dli}) and the ${\sf{Bounded\ Direct\ Limits}}$ (see \rdef{bdl}), and show that the two principles together imply ${\sf{HPC}}$. 

In a subsequent paper, using the work presented here, we will establish ${\sf{HPC}}$ below a Woodin cardinal that is a limit of Woodin cardinals and derive other applications. It is expected that the concept of generic generators will play a key role in the eventual resolution of ${\sf{HPC}}$. 

Mathematics Subject Classification: 03E55, 03E57, 03E60 and 03E45.
\end{abstract}

\maketitle
\setcounter{tocdepth}{1}

Descriptive inner model theory is an area of set theory that uses tools from inner model theory and descriptive set theory to study the internal structure of models of determinacy and also to develop tools to build models of determinacy from various large cardinals. One of its main goals is to create links between various foundational frameworks. The key desirable feature of the links sought by descriptive inner model theory is the ability to transfer set theoretic complexity from one framework to another. 

The most common and the oldest way to measure set theoretic complexity of a foundational framework is by looking at the canonical inner models for large cardinals whose existence is implied by the framework. As is well known, such canonical inner models form a hierarchy that is closely related to the large cardinal hierarchy. Another closely related but more modern method of measuring the set theoretic strength is by examining the kind of universally Baire sets that exist within the framework. Both approaches are based on a profound belief that set theoretic complexity is contained within the canonical inner models for large cardinals as well as the universally Baire sets. The striking aspect of this belief is that these two canonical objects capture all set theoretic complexity of a given set theoretic framework not just a portion of it. 

Descriptive inner model theory has unified both approaches into one single unified method, the ${\sf{Core\ Model\ Induction}}$, and the current work seems to imply that the old pure inner model theoretic approach cannot work, at least not without major modifications. \\

\textbf{The descriptive inner models theoretic approach}\\

As was mentioned above, one of the main goals of descriptive inner model theory is to use tools from inner model theory and descriptive set theory to build set-theoretic-complexity preserving links between various set theoretic frameworks. Since at the time of writing this paper  the canonical inner models that are well-understood are below superstrong cardinals, we must restrict ourselves to this region of the large cardinal hierarchy. We then make the following definition, which is  \cite[Definition 1.74]{SteelCom} and formalizes the content of the previous sentence. 
\begin{definition}[No Long Extender]\label{nle} $\sf{NLE}$ stands for the following statement: there is no active $\omega_1+1$-iterable countable mouse or an lbr mouse $\M$ such that $\M$'s last extender is a long extender.
\end{definition}

\begin{remark} The exact relationship between  $\sf{NLE}$ for pure mice and $\sf{NLE}$ for lbr mice is currently unknown. It is conceivable that $\sf{NLE}$ for lbr mice might be consistent with a failure of ${\sf{NLE}}$ for pure mice. However, in this paper, the distinction between the two versions of $\sf{NLE}$ will not play any role\footnote{We thank Hugh Woodin for pointing out the aforementioned distinction.}. 
\end{remark}
As was mentioned above, the ${\sf{Core\ Model\ Induction}}$ is currently our most robust tool for building models of determinacy from various set theoretic hypothesis. In \cite{ESSealing}, the author and Trang studied its current limitations, and to solve this limitations the author introduced the following conjecture (see \cite[Conjecture 1.13]{ESSealing} and \cite[Conjecture 0.1]{CCM})

\begin{conjecture}[Covering with Chang Models]\label{ccm} Assume $\sf{NLE}$ and suppose there are unboundedly many Woodin cardinals and strong cardinals\footnote{Using technical inner model theoretic concepts, it is possible to state a similar conjecture without assuming the existence of large cardinals.}. Let $\kappa$ be a limit of Woodin cardinals and strong cardinals and such that either $\k$ is a measurable cardinal or $\cf(\k)=\omega$. Then there is a transitive model $M$ of $\sf{ZFC-Powerset}$ such that
\begin{enumerate}
\item ${\sf{Ord}}\cap M=\k^+$,
\item $M$ has a largest cardinal $\nu$,
\item for any generic $g\subseteq Coll(\omega, <\k)$, letting $\mathbb{R}^* = \bigcup_{\alpha<\k} \mathbb{R}^{V[g\cap Coll(\omega,\alpha)]}$ and $\Gamma^*=\{ A^g\cap \bR^*: \exists \a<\k(A\in \Gamma^\infty_{g\cap Coll(\omega,\alpha)})\}$, in $V(\bR^*)$, 
\begin{center}
$L(M, \bigcup_{\a<\nu}\a^\omega, \Gamma^*, \bR^*)\models \sf{AD^+}$.
\end{center}
\item If in addition there is no inner model with a subcompact cardinal then
\begin{center}
 $L(M)\models {\sf{ZFC}}+\powerset(\nu)=\powerset(\nu)^M+\square_\nu$.
 \end{center}
\end{enumerate}
\end{conjecture}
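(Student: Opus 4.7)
The plan is to reduce the conjecture to ${\sf{HPC}}$ below $\k$ by a ${\sf{Core\ Model\ Induction}}$, derive ${\sf{HPC}}$ from the two principles ${\sf{DLI}}$ and ${\sf{BDL}}$ via the main theorem of the present paper, and verify these two principles in the setting of the conjecture. Under ${\sf{NLE}}$, all mice involved are short, so the generic generator construction is available throughout. The CMI produces, by induction on $\a<\k$, an increasing sequence of hod pairs $(\P_\a,\Sigma_\a)$ whose direct limit, appropriately truncated, will serve as the model $M$.

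The substantive new work is the verification of ${\sf{BDL}}$ and ${\sf{DLI}}$ below $\k$. For ${\sf{BDL}}$, the dichotomy that $\k$ is measurable or $\cf(\k)=\omega$ is used in the standard way: either a normal measure on $\k$ or an $\omega$-cofinal sequence converging to $\k$ supplies the external structure needed to bound the relevant direct limits inside $\k^+$, which is the content of \rdef{bdl}. For ${\sf{DLI}}$, the presence of unboundedly many strong cardinals below $\k$ is crucial; each such strong supplies a level of elementarity that allows generic generators produced in different collapse extensions to be identified canonically, yielding the independence of the direct limit from the generic choice of $g$ demanded by \rdef{dli}. With both principles verified, ${\sf{HPC}}$ follows, and the CMI runs unobstructed up to $\k$.

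From ${\sf{HPC}}$ below $\k$, the model $M$ is obtained as the direct limit of all countable hod premice iterable by tails of $\bigcup_{\a<\k}\Sigma_\a$, truncated to have height $\k^+$ with largest cardinal $\nu$, the image under the direct limit map of the supremum of the Woodin cardinals of the approximating hod premice. Clauses (1) and (2) are then immediate from the construction. Clause (3) follows from Woodin's derived model theorem applied in $V(\bR^*)$: $\Gamma^*$ is the canonical universally Baire pointclass of the symmetric extension, the strategies $\Sigma_\a$ extend canonically to $V(\bR^*)$, and the additional sets $\bigcup_{\a<\nu}\a^\omega$ are absorbed because each $\a<\nu$ becomes countable in $V[g\cap Coll(\omega,\b)]$ for sufficiently large $\b<\k$, so its $\omega$-sequences are coded into $\bR^*$.

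Clause (4) is obtained by a Schimmerling--Zeman style analysis: the hypothesis of no inner model with a subcompact cardinal means the extender sequence of $M$ carries a definable $\square_\nu$-sequence, and a standard covering argument between $L(M)$ and $M$ transfers both this sequence and the equality $\powerset(\nu)^{L(M)}=\powerset(\nu)^M$ to $L(M)$. The principal obstacle, by a wide margin, will be the verification of ${\sf{DLI}}$: branch condensation and the coherence of generic generators are presently controlled only one Woodin cardinal at a time, whereas ${\sf{DLI}}$ demands uniform control simultaneously across the entire interval of Woodins and strongs below $\k$, and supplying this uniformity is precisely where the concept of generic generators is expected to do its deepest work.
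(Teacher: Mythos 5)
The statement you are trying to prove is \rcon{ccm}, and in the paper it is stated as a \emph{conjecture}, not a theorem. The paper never proves it and never claims to; it explicitly positions \rcon{ccm} as the target of a long-term program, cites \cite[Theorem 0.4]{CCM} (verification of the conjecture \emph{inside hod mice}) as the main evidence for it, and remarks that its eventual proof would pass through the ${\sf{HOD\ Problem}}$, ${\sf{HPC}}$, and a subsequent lifting step ``above $\Gamma^*$.'' So there is no ``paper's own proof'' to compare against, and nothing you have written closes the conjecture.

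Beyond that, the proposal has a circularity that makes it non-starting even as a sketch. You propose to ``verify ${\sf{BDL}}$ and ${\sf{DLI}}$ below $\k$'' and then invoke the results of this paper to get ${\sf{HPC}}$. But the paper itself never verifies ${\sf{DLIH}}$ or ${\sf{BDL}}$ under ${\sf{NLE}}$ --- those verifications are exactly \rcon{another con}, which is left open, with the only supporting evidence coming from the ${\sf{NWLW}}$ case promised in \cite{CWLD}. Your paragraph offering ``a normal measure or an $\omega$-cofinal sequence'' for ${\sf{BDL}}$ and ``each strong supplies a level of elementarity'' for ${\sf{DLI}}$ is not an argument; both principles are stated in \rdef{dli} and \rdef{bdl} inside models of ${\sf{AD_{\bR}}}+V=L(\powerset(\bR))$ over the poset $Coll(\omega_1,\bR)$, whereas \rcon{ccm} lives in a ${\sf{ZFC}}$ universe with large cardinals, so even formulating what ``${\sf{DLIH}}$ below $\k$'' should mean requires the core-model-induction translation layer you are taking for granted. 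Moreover, what the paper actually proves (Theorem \ref{getting superstrongs}) is a contrapositive under the additional hypothesis that $A$-wlw backgrounds exist for every $A$, and even that result carries over to hod pairs only modulo the gap flagged in \rrem{gap 0} and \rrem{gap 1}; you cannot cite ``the main theorem of the present paper'' as an unconditional derivation of ${\sf{HPC}}$ from the two principles. Finally, clause (4) --- the $\square_\nu$ computation in $L(M)$ under no-subcompact --- is exactly the fine-structural covering argument that \rcon{ccm} is designed to encapsulate; ``a Schimmerling--Zeman style analysis'' is a name for the problem, not a solution. In short: you have correctly identified the intended route (CMI to get ${\sf{HPC}}$, then build $M$ as a direct limit and lift), but every substantive step you appeal to is either an open conjecture in the paper, left to the sequel \cite{CWLD}, or unformulated outside the ${\sf{AD}^+}$ context.
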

The importance of the conjecture is that assuming it one can show that the ${\sf{Proper\ Forcing\ Axiom}}$ implies that there is an inner models with a subcompcat cardinal (see \cite[Corollary 0.3]{CCM}). Clause 3 of \rcon{ccm} is a generalization of Woodin's Derived Model Theorem: the statement that $L(\Gamma^*, \bR^*)\models {\sf{AD^+}}$ is just the Derived Model Theorem (see \cite{St09DMT}). \cite[Theorem 0.4]{CCM} verifies that the conjecture is true inside hod mice, which is our main evidence for it. The author strongly believes that proving the conjecture is the most fruitful approach to the age old problem that the ${\sf{Proper\ Forcing\ Axiom}}$ implies the existence of an inner model with a superstrong or a subcompact cardinal. 

According to the current methodology, a key step towards proving \rcon{ccm} is the $\sf{HOD\ Problem}$ (see \cite[Question 3.5]{DIMT} and \cite[Chapter 1.4]{SteelCom}), which asks whether the HOD of a model of determinacy that has the form $V=L(\powerset(\bR))$ satisfies ${\sf{GCH}}$. The problem is the main topic of \cite{HMMSC}, \cite{LSA} and \cite{SteelCom}, and the currnet most successful way of attacking it is by showing that assuming ${\sf{AD^+}}+V=L(\powerset(\bR))$, ${\sf{HOD}}\cap V_\Theta$ is a universe of a hod mouse. In \cite{SteelCom}, Steel reduced the problem to $\sf{Hod\ Pair\ Capturing}$ (see \cite[Definition 1.7.1]{SteelCom}).
\begin{definition}[$\sf{Hod\ Pair\ Capturing}$]\label{hpc} Assume ${\sf{AD^{+}}}$ and suppose $A\subseteq \bR$ is a Suslin and co-Suslin set of reals. Then there is a hod pair $(\M, \Sigma)$ such that $A$ is first order definable over the structure $(HC, \Sigma, \in)$. 
\end{definition}
The $L[\vec{E}]$-capturing, $\sf{LEC}$, introduced in \cite[Definition 1.71]{SteelCom} is the same as $\sf{HPC}$ except that we require that the pair $(\M, \Sigma)$ is a pure extender pair (i.e., $\M$ is just an ordinary mouse). It is shown in \cite{SteelCom} that $\sf{LEC}$ implies $\sf{HPC}$. \cite[Theorem 1.73]{SteelCom} states that ${\sf{AD}}_{\bR}+{\sf{HPC}}+V=L(\powerset{\bR})$ implies that ${\sf{HOD}}\cap V_\Theta$ is a universe of a hod mouse, and hence satisfies ${\sf{GCH}}$. Because of this, \cite[Conjecture 1.75 and 1.76]{SteelCom} are the most outstanding theoretical open problems of descriptive inner model theory. We repeat them below.

\begin{conjecture}\label{lec conjecture} Assume ${\sf{AD^{+}+NLE}}$. Then $\sf{LEC}$ holds.
\end{conjecture}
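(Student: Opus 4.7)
The plan is to adapt the generic generator machinery developed in this paper to the pure-extender-mouse setting. Fix $A\subseteq \bR$ Suslin and co-Suslin under ${\sf{AD^{+}+NLE}}$; the goal is to produce a pure extender pair $(\M,\Sigma)$ whose iteration strategy defines $A$ first order over $(HC,\Sigma,\in)$. Since the main theorem of the paper will give ${\sf{DLI}}+{\sf{BDL}}\Rightarrow {\sf{HPC}}$, the natural first task is to isolate pure-extender analogues ${\sf{DLI}}^{pe}$ and ${\sf{BDL}}^{pe}$, reworking the notion of generic generator so that it lives inside $L[\vec{E}]$-premice rather than in a strategy hierarchy.

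The steps I would carry out are as follows. First, set up a core model induction driven by the scales hierarchy below $\Theta$, using ${\sf{NLE}}$ at every step to rule out long-extender pathologies; this guarantees that the strategy mice that appear in the induction can be translated into ordinary $L[\vec{E}]$-mice plus an external mouse operator. Second, formulate ${\sf{DLI}}^{pe}$ as the statement that the direct limit of countable iterates of a fixed pure extender mouse $\mathcal{P}$ under its $\Gamma$-fullness-preserving strategy is independent of the cofinal system of hulls used to form it. This should follow from a generic-generator argument: one shows that generic generators for $\mathcal{P}$ exist in a sufficiently closed background universe, and that any two cofinal collections of hulls are absorbed into a common generic extension, which forces the two direct limits to agree. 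Third, formulate ${\sf{BDL}}^{pe}$ as a boundedness statement for the ordinals of these direct limits and prove it by combining the existence of generic generators with the translation procedure between hod mice and pure extender mice from \cite{SteelCom}.

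The hardest step is ${\sf{BDL}}^{pe}$. In the hod-pair case treated in this paper, boundedness is closely tied to the internal structure of the strategy predicate, which names the relevant branches of iteration trees directly; pure extender mice lack such a predicate, so the generic generators must be constructed externally, using only the extender sequence. The main obstacle is that at a scales jump where ${\sf{LEC}}$ would fail if it were to fail, the candidate pure extender mouse must reach into a strategy level in order to define the generic generator, and this requires a bypass argument: under ${\sf{NLE}}$ one would try to show that the strategy of the associated hod pair is itself coded by a background pure extender mouse's extender sequence, so that the generic generator can be read off the extender sequence alone. This bypass is where I expect the work to be hardest and most novel, and its resolution seems to require a genuinely new idea. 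Once ${\sf{DLI}}^{pe}$ and ${\sf{BDL}}^{pe}$ are in place, the ${\sf{DLI}}+{\sf{BDL}}\Rightarrow {\sf{HPC}}$ argument of this paper should transfer with only cosmetic changes to yield ${\sf{LEC}}$ directly, since the reduction uses its hod input only through the generic-generator direct-limit machinery.
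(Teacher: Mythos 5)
You have written a research plan, not a proof, and that is exactly as it should be: Conjecture~\ref{lec conjecture} is stated in this paper as an open problem, not a theorem. The paper does not prove it. What the paper \emph{does} prove (Theorems~\ref{existence of gg}, \ref{comp of powerset} and \ref{getting superstrongs}) is a conditional reduction: assuming ${\sf{AD}}_{\mathbb{R}}+{\sf{NLE}}+V=L(\powerset(\bR))$, that every $A\subseteq\bR$ has an $A$-wlw background, and that $\neg{\sf{LEC}}$ holds, then one of ${\sf{DLIH}}(\Delta)$ or ${\sf{BDL}}(\Delta)$ must fail for $\Delta=\Delta_{gen,pe}$. So ${\sf{LEC}}$ is reduced to establishing ${\sf{DLIH}}$ and ${\sf{BDL}}$ for pe-generic generators, and the paper explicitly leaves that as Conjecture~\ref{another con}, to be attacked in a sequel under the extra hypothesis ${\sf{NWLW}}$.

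Against that backdrop, a few of your points need correcting. First, you treat the pure-extender case as requiring a reworking of the generic-generator notion to ``live inside $L[\vec{E}]$-premice rather than in a strategy hierarchy,'' and you worry that ``pure extender mice lack such a predicate.'' But Definition~\ref{generic generator def} already defines \emph{pe}-generic generators as pairs $(\P,\Sigma)$ where $\P$ is a pure-extender premouse and $\Sigma$ is an \emph{external} $\omega_2$-strategy acting on $L[\P]$; no internal strategy predicate is used or needed, so there is no translation obstacle of the kind you describe. Theorem~\ref{existence of gg} already produces pe-generic generators from $\neg{\sf{LEC}}$, and Theorem~\ref{getting superstrongs} is stated directly for the pe case. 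Second, the role you assign to a ``core model induction driven by the scales hierarchy'' does not appear in the paper's argument; the mechanism is a comparison and universality argument inside fully backgrounded $L[\vec{E}]$-constructions of $\N^*_x$-models, together with the powerset computation of Theorem~\ref{comp of powerset}, not a scales-driven induction. Third, the reduction is not to ${\sf{DLI}}+{\sf{BDL}}$ alone: Theorem~\ref{getting superstrongs} also needs the ambient hypothesis that for every $A\subseteq\bR$ there is an $A$-wlw background, which is an additional strength assumption you should not suppress.

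Where you are right is in identifying the genuine gap: nothing in the paper proves ${\sf{DLIH}}$ or ${\sf{BDL}}$ from ${\sf{AD^+}}+{\sf{NLE}}$ alone, and the paper itself says this requires a comparison theory for (uncountable) generic generators that is developed only in the forthcoming work \cite{CWLD}, and only under ${\sf{NWLW}}$. Your closing remark that the hard step ``seems to require a genuinely new idea'' is consistent with the paper's own assessment. In short: your proposal reconstructs the paper's reduction at a coarse level but does not, and could not, close the gap, since the statement is not yet a theorem.
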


\begin{conjecture}\label{hpc conjecture} Assume ${\sf{AD^{+}+NLE}}$. Then $\sf{HPC}$ holds.
\end{conjecture}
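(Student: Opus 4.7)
The plan is to derive the conjecture from the two principles that this paper introduces: ${\sf{Direct\ Limit\ Independence}}$ (${\sf{DLI}}$) and ${\sf{Bounded\ Direct\ Limits}}$ (${\sf{BDL}}$). Since the paper establishes that ${\sf{DLI}}$ and ${\sf{BDL}}$ together imply ${\sf{HPC}}$, it suffices to prove both under ${\sf{AD^{+}+NLE}}$. The core technical innovation underlying both principles is the notion of a generic generator, and the proof hinges on showing that generic generators inherit enough of the structural features of ordinary extender generators to support the two principles. First I would fix a Suslin and co-Suslin set $A$, identify a pointclass $\Gamma$ containing $A$, and work with the direct limit system of countable hod premice whose iteration strategies lie in $\Gamma$. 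I would then verify that under ${\sf{NLE}}$, every point in a generic iterate of a hod premouse in this system is approximated by a generic generator, so that the language of generic generators is adequate for analyzing the system.

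Next I would attempt ${\sf{DLI}}$. The claim is, roughly, that the direct limit of the hod premice and its trace of generic generators do not depend on the choice of iteration tree used to produce it. The natural route is to imitate the comparison theorem for hod pairs that is available under ${\sf{AD^{+}}}$, replacing the usual coherent-sequence comparison with one that operates on records of generic generators. The Dodd--Jensen property of iteration strategies, combined with ${\sf{NLE}}$ ruling out the long-extender configurations that would otherwise destroy the generic generator record, should force any two direct limits of the same system to agree on a common generic generator skeleton and hence to be canonically isomorphic.

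Then I would attempt ${\sf{BDL}}$, the claim that the relevant direct limits are bounded inside a natural transitive structure. Here a pointclass reflection argument seems most promising: the direct limit of the hod premice whose strategies compute $A$ should live inside $V_\Theta^{L(\Gamma,\mathbb{R})}$, and some form of coarse mouse capture in $\Gamma$ ought to provide the required bound without presupposing ${\sf{HPC}}$. With ${\sf{DLI}}$ and ${\sf{BDL}}$ in hand, the main theorem of this paper then yields ${\sf{HPC}}$.

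The main obstacle I expect is ${\sf{DLI}}$. Historically, the coherence problem across different iterations is precisely the barrier that has prevented a proof of ${\sf{HPC}}$: distinct iterations may produce distinct hod pairs both capturing $A$, and reconciling them requires a genuinely new understanding of how generic extenders interact. The notion of generic generator is designed to attack this problem, but extracting agreement across all generic iterations, not merely the ``honest'' ones arising from genericity iterations, will require delicate forcing-theoretic bookkeeping, and it is precisely here that ${\sf{NLE}}$ must be used decisively rather than as background. A secondary difficulty is that ${\sf{BDL}}$ must be argued at the level of coarse mice in order to avoid circularity with ${\sf{HPC}}$ itself.
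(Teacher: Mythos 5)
This statement is a \emph{conjecture} in the paper, not a theorem, and the paper does not prove it; so a proposal purporting to derive ${\sf{HPC}}$ from ${\sf{AD^+}}+{\sf{NLE}}$ is claiming substantially more than the paper itself does. What the paper does prove (Theorem \ref{getting superstrongs} and the remark following it) is that, \emph{assuming additionally that for every set of reals there is an $A$-wlw background}, if ${\sf{DLIH}}(\Delta)$ and ${\sf{BDL}}(\Delta)$ both hold after forcing with $Coll(\omega_1,\bR)$ then ${\sf{HPC}}$ holds. The paper then leaves the verification of ${\sf{DLIH}}$, ${\sf{BDL}}$, and the wlw-background hypothesis explicitly open (Conjectures \ref{important conjecture} and \ref{another con}), deferring even the partial case under ${\sf{NWLW}}$ to a forthcoming sequel. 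You have correctly identified the reduction, but the reduction is where the paper stops, not where the proof ends.

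The specific gap in your argument is the passage from ``it suffices to prove ${\sf{DLIH}}$ and ${\sf{BDL}}$'' to ``and here is how to prove them.'' Your plan for ${\sf{DLIH}}$ is to imitate the hod-pair comparison theorem under ${\sf{AD^+}}$ using Dodd--Jensen. But the paper explicitly states that the comparison theory of Steel's framework does not apply to generic generators in a way usable for ${\sf{DLIH}}$ or ${\sf{BDL}}$, precisely because generic generators are uncountable and live in a generic extension; the comparison theorem for them is asserted to require a genuinely new development (the cited \cite{CWLD}), and even then under the extra hypothesis ${\sf{NWLW}}$. So ``imitate the existing comparison theorem'' is the one route the paper rules out. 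Your plan for ${\sf{BDL}}$ via pointclass reflection and coarse mouse capture is a reasonable heuristic, but the paper gives no argument here either, and the boundedness claim involves direct limits of uncountable objects evaluated across \emph{all} generic generators in $V[g]$, not just those arising from a single backgrounded construction --- a uniformity the reflection argument would have to deliver. Finally, even granting both principles, Theorem \ref{getting superstrongs} requires the existence of $A$-wlw backgrounds for every $A$, which you never address and which is itself not a consequence of ${\sf{AD^+}}+{\sf{NLE}}$ on its own. In short: your proposal is a faithful restatement of the paper's \emph{program}, not a proof, and the paper itself is explicit that the program's key ingredients remain open.
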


Something very close to both $\sf{LEC}$ and $\sf{HPC}$ were considered by the author in \cite[Theorem 6.1]{HMMSC}, in \cite{LSA} and also in \cite[Conjecture 3.17]{DIMT}, where they were called Generation of Full Pointclasses. In this paper, we will describe a general framework for attacking \rcon{lec conjecture} and \rcon{hpc conjecture}. In a sequel to this paper, we will show how to use this line of attack to prove both conjectures assuming there is no $\omega_1+1$-iterable mouse with a Woodin cardinal that is a limit of Woodin cardinals.

The reason ${\sf{HPC}}$ is relevant to \rcon{ccm} is that $M$ in \rcon{ccm} is essentially a hod mouse. In order to build it, we first need to prove that $\sf{HOD}$ of $L(\Gamma^*, \bR^*)$ is a hod mouse, and then use this fact to \textit{lift} it above $\Gamma^*$. \\

\textbf{The classical approach}\\

The classical approach to the problem of transference of set-theoretic complexity from one framework to another is via the core model $K$ and its parent model, $K^c$. The core model theory as developed in \cite{CMIP} and \cite{JS13} has been a very successful tool for establishing lower bounds. For example, the results of \cite{CMIP} and \cite{JS13} together show that the saturation of the non-stationary ideal requires a Woodin cardinal. The core model theory plays also a crucial role in core model induction, in the internal steps. For example, it is used crucially in Steel's proof that ${\sf{PFA}}$ implies that ${\sf{AD}}^{L(\bR)}$ (see \cite{St05}). The core model theory is successful because of the Covering Theorems, the simplest of which is Jensen's Covering Lemma (see \cite{CMIP}, \cite{CLUW} and \cite{MSch95}). 

The key shortcoming of core model theory is that mild large cardinals, such as Woodin cardinals, imply that the core model doesn't exist, and hence, it cannot have covering properties. In \cite{JSSS}, Jensen-Schimmerling-Schindler-Steel showed that the core model's parent model, $K^c$, which does not posses some of the key properties of $K$ such as generic absoluteness, also has covering properties at sufficiently closed regular cardinals. The advantage of $K^c$ is that because it is not as rigid, the arguments that show the non-existence of $K$ do not apply. The following is one of the key theorems of \cite{JSSS}.
\begin{theorem}[{\cite[Theorem 0.3]{JSSS}}]\label{kc theorem} Let $\kappa\geq \aleph_3$ be a regular cardinal. Assume that $\kappa$ is countably closed in the sense that for all $\eta<\kappa$, $\eta^{\aleph_0}< \kappa$. Suppose that both $\square(\kappa)$ and $\square_\kappa$ fail. If the certified $K^c$ exists in $V^{Coll(\kappa, \kappa)}$ then there is a subcompact cardinal in the certified $K^c$ of $V^{Coll(\kappa, \kappa)}$. 
\end{theorem}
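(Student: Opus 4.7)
The plan is to argue by contradiction: assume the certified $K^c$ of $V^{Coll(\kappa,\kappa)}$ has no subcompact cardinal, and produce either a $\square(\kappa)$-sequence or a $\square_\kappa$-sequence that descends to $V$, contradicting the hypothesis. Because $Coll(\kappa,\kappa)$ is $\kappa$-closed and, under the countable-closure assumption $\eta^{\aleph_0}<\kappa$ for all $\eta<\kappa$, cardinals at and above $\kappa$ are preserved with enough control on cofinalities at $\kappa^+$, any square sequence manufactured in the $Coll(\kappa,\kappa)$-extension restricts to a genuine witness in $V$.

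The first substantive step is to build square sequences inside $(K^c)^{V^{Coll(\kappa,\kappa)}}$. Under the contradictory assumption, this $K^c$ is a fine-structural extender model with no subcompact, so the Schimmerling--Zeman condensation-based construction applies and gives $\square_\mu$ at every $K^c$-singular $\mu$ and $\square(\mu)$ at every uncountable $K^c$-regular $\mu$. In particular, one gets a coherent sequence inside $K^c$ targeting the ordinal $\kappa$. This part is purely internal to $K^c$ and relies only on the iterability certified by the $K^c$-construction and on the standard square-synthesis in extender models below subcompact.

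The crucial second step is a covering argument lifting the $K^c$-sequence at $\kappa$ to a true sequence in $V^{Coll(\kappa,\kappa)}$. Split on whether $\kappa$ is $K^c$-regular or $K^c$-singular. If $\kappa$ is $K^c$-regular, the $K^c$-coherent sequence witnessing $\square(\kappa)$ remains coherent in the extension, and any thread there would, by covering, project to a thread in $K^c$, contradicting that the original sequence was threadless there; so $\square(\kappa)$ holds in $V^{Coll(\kappa,\kappa)}$. If $\kappa$ is $K^c$-singular, one stitches $\square_{\bar\kappa}$-sequences from $K^c$-predecessors of $\kappa$ along the $K^c$-cofinal resolution of $\kappa$ to produce an honest $\square_\kappa$-sequence in $V^{Coll(\kappa,\kappa)}$.

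The main obstacle, and where the theorem really departs from classical $K$-theory, is this covering step: $K^c$ is not generically absolute, so one cannot simply quote Mitchell--Schimmerling--Steel. The role of the $Coll(\kappa,\kappa)$-generic is precisely to certify enough iterability, via comparisons carried out in the extension, to secure cofinality correctness of $K^c$-successors at $\kappa$ and, more importantly, absoluteness of non-threadability between $K^c$ and the extension. Making this absoluteness work at cardinals as low as $\aleph_3$, under only the countable-closure hypothesis rather than genuine inaccessibility, is where I expect the bulk of the technical difficulty to concentrate, since the failure of $\square(\kappa)$ and $\square_\kappa$ in $V$ must be leveraged to rule out every potential thread that the collapse might introduce.
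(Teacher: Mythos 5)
The paper does not prove this theorem; it is quoted verbatim from \cite[Theorem 0.3]{JSSS} and invoked as a black box in the discussion of the classical $K^c$ approach. There is therefore no in-paper argument against which to check your proposal.

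That said, your sketch does capture the broad shape of the published JSSS argument: internal square synthesis in the fine-structural $K^c$ below a subcompact (Schimmerling--Zeman), followed by a covering-style transfer to the $\mathrm{Coll}(\kappa,\kappa)$-extension and then down to $V$. What you are treating as a footnote is actually the heart of the matter. The ``crucial second step'' you describe --- showing that the $K^c$-square sequence at $\kappa$ remains threadless in $V^{\mathrm{Coll}(\kappa,\kappa)}$ and then descends to $V$ --- is precisely the content of the new covering machinery developed in \cite{JSSS} for the Mitchell--Schindler certified $K^c$, and it cannot be dispatched by ``leveraging the failure of $\square(\kappa)$ and $\square_\kappa$'' as you suggest; the failure of square is the conclusion of the contradiction, not an available tool inside the covering argument. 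The role of $\mathrm{Coll}(\kappa,\kappa)$ is also somewhat different from what your sketch implies: it is not there to ``certify iterability via comparisons,'' but to ensure that $\kappa^{<\kappa}=\kappa$ holds so that the certified $K^c$-construction (which requires a robust supply of background certificates indexed below $\kappa$) can be carried out at all, and so that the relevant covering lemma applies at $\kappa$. Finally, the descent from $V^{\mathrm{Coll}(\kappa,\kappa)}$ back to $V$ needs an explicit argument using the $\kappa$-closure and countable closure of $\kappa$ to pull a square sequence on $\kappa^+$ from the extension into $V$; your sketch asserts it but does not say which homogeneity or closure property makes it work. None of this is a mistake per se --- it is an accurate table of contents --- but it is not a proof, and the hard analytic content is exactly what you have compressed into single sentences.
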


The statement ``$K^c$ exists" says that the construction producing $K^c$, the $K^c$-construction, converges to a model of the desired height. The $K^c$ constructions are constructions much like G\"odel's $L$ and Kunen's $L[\mu]$ that aim to produce a class size model inheriting all large cardinals from $V$. In this constructions, one inductively picks extenders or ultrafilters from $V$ and adds them to the constructibility hierarchy as a predicate. The construction can be very conservative allowing one to pick only the extenders in $V$ that are total and witness strongness embeddings. The $K^c$ construction used in \cite{JSSS} and in \rthm{kc theorem} is the most liberal $K^c$ construction introduced by Mitchell and Schindler in \cite{MitSch}. It essentially allows as many extenders to be added to the model as is possible. It is an abstract theorem of inner model theory that if the countable submodels of models appearing in the $K^c$ construction have $\omega_1+1$-iteration strategies then the $K^c$ construction converges. Given \rthm{kc theorem}, the following then is the key conjecture of the area.

\begin{conjecture}[{\cite[Conjecture 6.5]{OIMT}}] Suppose $\N$ is a premouse occurring in a $K^c$ construction, that $k\leq \omega$, and that $M$ is a countable premouse such that there is a weak
$k$-embedding from $M$ into $C_k(N)$\footnote{This is the $k$th core of $N$.}; then $M$ is $(k, \omega_1, \omega_1+1)$-iterable.
\end{conjecture}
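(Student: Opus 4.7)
I would follow Steel's standard template for proving iterability of countable hulls of models appearing in $K^c$-constructions, as outlined in \cite{OIMT}. Given the weak $k$-embedding $\pi: M \to C_k(\N)$, the strategy on $M$ would be defined tree-by-tree: given a putative iteration tree $\T$ on $M$, I would simultaneously construct an ``enlargement'' $\T^{*}$, which is an iteration tree on $V$ (or on an appropriately large rank initial segment thereof), together with copy maps $\sigma_\alpha : \M^\T_\alpha \to \M^{\T^{*}}_\alpha$ commuting with the tree orderings. The extenders used in $\T$ are, after application of the copy maps, derived extenders of extenders on the sequence of the corresponding model of the $K^c$-construction, which in turn are certified by total background extenders on $V$; this allows one to form $\T^{*}$ one step at a time.

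The main technical work is at successor stages, where the \emph{resurrection} calculus is needed to ensure that applying the background extender of a copied extender produces a model into which the next stage of $\T$ embeds. Once these copy commuting diagrams are set up, wellfoundedness of $\M^\T_\alpha$ follows immediately from wellfoundedness of $\M^{\T^{*}}_\alpha$, which holds because the latter is an iterate of $V$ by an iteration tree using only total extenders with background certificates.

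At a limit stage $\lambda$, the problem becomes finding a cofinal wellfounded branch of $\T$. The enlargement $\T^{*}$, being an iteration tree on a universal background model, admits cofinal wellfounded branches (indeed, using $\mathrm{DC}$ and the standard argument one finds branches along which the direct limit is wellfounded). The key assertion is then that at least one such branch of $\T^{*}$ is \emph{realized} by a branch of $\T$, i.e.\ the copy maps extend continuously to a branch of $\T$. For the length-$\omega_1$ case one reduces via reflection and hull arguments to the countable case handled above.

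\textbf{Main obstacle.} The realization step at limit stages is precisely where the argument breaks down in general. The copy construction requires that background extenders have sufficient closure and agreement to reflect the choice of a branch on $\T^{*}$ back to a branch on $\T$; when $\T$ uses \emph{long extenders}, the background certificates available from a $K^c$-construction need not carry enough information, and the realization map can fail to exist. This is exactly the phenomenon that motivates the $\sf{NLE}$ hypothesis of \rdef{nle}, and it is this long-extender barrier that makes the conjecture genuinely open. I would expect any proof to require either a strengthening of the background condition in the $K^c$-construction (at the cost of convergence) or a wholly new mechanism for constructing branches, for example via the generic generator framework this paper begins to develop.
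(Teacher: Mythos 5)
This is not a theorem of the paper, and the paper does not offer a proof of it: the statement is Conjecture~6.5 of \cite{OIMT}, which the paper merely restates as background in order to frame the classical $K^c$-approach. You were therefore in a position where no complete proof could be expected, and to your credit you correctly flagged the realization step at limit stages as the place where the standard copying/enlargement argument breaks down. But your diagnosis of \emph{why} it breaks down is off the mark, and this matters because the paper's stance on the conjecture is sharper than ``open.''

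You attribute the obstruction to ``long extenders'' and say this is what motivates $\sf{NLE}$. That conflates two unrelated boundaries. $\sf{NLE}$ (Definition~\ref{nle}) marks the frontier of the fine-structure theory that descriptive inner model theory currently rests on — below superstrongs — and is a hypothesis imposed on the mice studied, not a reflection of a failure in the $K^c$-copying argument. By contrast, the Iterability Conjecture concerns whether countable hulls of models in a $K^c$-construction are iterable, and the difficulty there is that the very liberal backgrounding condition of $K^c_{\sf{MiSch}}$ certifies extenders whose realizations need not persist across branch choices — nothing to do with long extenders. Moreover, the paper's actual position is that for $K^c_{\sf{MiSch}}$ the conjecture is not merely open but \emph{cannot be proven in ZFC}: combining Theorem~\ref{kc theorem} with \cite[Theorem 1.12]{LaSa21} and the comparison theory announced for \cite{CWLD}, one concludes that iterability of $K^c_{\sf{MiSch}}$, together with the covering theorem, would yield inner models with subcompacts from consistency-wise too-weak hypotheses. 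So your closing suggestion — that a proof might be salvaged by strengthening the background condition or via the generic generator machinery — is pointing in the wrong direction: the paper's use of generic generators is precisely part of the argument that this conjecture (for the liberal $K^c$) \emph{fails}, not a toolkit for proving it.
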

We let $K^c_{\sf{MiSch}}$ be the $K^c$ construction introduced by Mitchell and Schindler in \cite{MitSch}. Thus, $K^c_{{\sf{MiSch}}}$ is the $K^c$ used in \rthm{kc theorem}.\\ 

\textbf{The goal of the paper}\\

The goal of this paper is to present a general approach to proving ${\sf{HPC}}$. The key new concept introduced in this paper is the concept of a \textit{generic generator}. First we make the following definition.

\begin{definition}\label{generated model} Assume ${\sf{AD^+}}$. Suppose $M$ is a transitive model of ${\sf{AD^+}}+{\sf{NLE}}+V=L(\powerset(\bR))$ such that $\bR\subseteq M$. We say $M$ is \textbf{hod pair generated} if there is a hod pair $(\P, \Sigma)$ such that $\Sigma\not \in M$. We then say that $(\P, \Sigma)$ \textbf{generates} $M$ or is a \textbf{generator} for $M$. The meaning of  ``pure extender generated" is defined similarly. 
\end{definition}
In \rdef{generated model}, the pair $(\P, \Sigma)$ generates $M$ because whenever $A\in M\cap \powerset(\bR)$, $A$ is first order definable over $(HC, \Sigma, \in)$. The key difference between generators and generic generators is that generic generators are not countable. Because of this, their comparison theory is harder to develop and we will do it elsewhere. Here, our main goal is to show how generic generators can be used to prove $\sf{LEC}$ and $\sf{HPC}$.

\begin{remark} \normalfont In this paper, all models of ${\sf{AD}}$ are assumed to be models of ${\sf{AD^+}}$. Thus, by writing $V\models {\sf{AD}}_{\mathbb{R}}$ we implicitly assume that $V\models {\sf{AD^+}}$.
\end{remark}

\textbf{A sequel}\\

As was mentioned above, we will need a new comparison theory for generic generators as the comparison theory developed in \cite{SteelCom} does not apply in a way that can be used to establish either ${\sf{DLIH}}$ or $\sf{BDL}$ (see \rdef{dli} and \rdef{bdl}). This will be done in \cite{CWLD}.

The comparison theorem developed in \cite{CWLD} can then be used to show that the hypothesis of the main theorem of \cite[Theorem 1.12]{LaSa21} is weaker than a Woodin cardinal that is a limit of Woodin cardinals. Therefore, \rthm{kc theorem} and \cite[Theorem 1.12]{LaSa21} imply that the Iterability Conjecture for $K^c_{\sf{MiSch}}$ cannot be proven in ${\sf{ZFC}}$. \\\\
\textbf{Acknowledgments.} The author is grateful to Nam Trang, John Steel and Sandra M\"uller for various conversations concerning the topic of this paper. The author is indebted to the referee for a very helpful list of corrections. The idea of proving ${\sf{HPC}}$ and other statements like it by calculating the powerset operation inside hod pair constructions is due to Steel and Woodin. This idea is heavily used in this paper many times (e.g. \rthm{comp of powerset} and \rthm{getting superstrongs}), and it is also heavily used in \cite{ESC}, \cite{HMMSC}, \cite{LSA} and \cite{SteelCom}. We are indebted to the referee for numerous comments that made the paper infintely better.

The author's work is funded by the National Science Center, Poland under the Weave-UNISONO call in the Weave program, registration number UMO-2021/03/Y/ST1/00281.

\section{The largest generated segment}

This paper is based on \cite{SteelCom} and \cite{MPSC}, and it will use the terminology developed there. In particular, our concept of a hod mouse is the one introduced in \cite[Chapter 1.5, Definition 9.2.2]{SteelCom}.

Suppose $V$ is a model of ${\sf{AD^+}}+{\sf{NLE}}+V=L(\powerset(\bR))$. Let $\Delta^V_{gen, hp}$ be the set of all hod pair \textit{generated sets of reals}. More precisely,  $A\in \Delta^V_{gen, hp}$ if and only if $A\subseteq \bR$ and there is a hod pair $(\P, \Sigma)$ such that $\P$ is countable and $A$ is projective in $Code(\Sigma)$\footnote{$Code(\Sigma)$ is the set of reals coding $\Sigma$ via some natural fixed method of coding members of ${\sf{HC}}$ via reals.}. We let $\Delta^V_{gen, pe}$ be the set of all $A\subseteq \bR$ that are generated by a pure extender pair, i.e., there is a pure extender pair $(\P, \Sigma)$ such that $\P$ is countable and $A$ is projective in $Code(\Sigma)$. The following is the main basic theorem known about $\Delta^V_{gen, hp}$.

\begin{definition}\label{theta reg} $\Theta_{{\sf{reg}}}$ is the theory ${\sf{AD}}_{\mathbb{R}}+``\Theta$ is a regular cardinal".
\end{definition}

\begin{theorem}\label{the generated portion} Assume ${\sf{AD^+}}$. If $\Delta_{gen, hp}\not =\powerset(\bR)$ then 
\begin{center}$L(\Delta_{gen, hp}, \bR)\models \Theta_{\sf{reg}}\ \text{and}\  \cf(\Theta^{L(\Delta_{gen, hp}, \bR)})\geq \omega_1.$
\end{center}
\end{theorem}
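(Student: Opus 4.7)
The plan is to first establish closure properties of $\Gamma := \Delta^V_{gen,hp}$, then promote ${\sf{AD^+}}$ and a full Suslin-representation theorem into $L(\Gamma,\bR)$ to obtain ${\sf{AD}}_{\mathbb{R}}$ there, and finally deduce regularity of $\Theta$ in the model together with $\omega_1$-cofinality in $V$. For closure, $\Gamma$ is self-dual and closed under Wadge reducibility, finite Boolean operations, and projective quantifications directly from the definition, since projective-in-$Code(\Sigma)$ is preserved by these. Closure under countable unions requires absorbing countably many $(\P_n,\Sigma_n)$ into a single countable hod pair $(\P,\Sigma)$ whose strategy codes the $\Sigma_n$, which is available through the comparison theory of \cite{SteelCom}. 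In parallel, every $A\in\Gamma$ is Suslin and co-Suslin in $V$ via the standard iteration-strategy Suslin representation of $\Sigma$ (the tree of attempts to build iteration trees on $\P$ together with their cofinal branches); since this tree is ordinal-definable from $\Sigma$, it lies in $L(\Gamma,\bR)$.

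Next, ${\sf{AD}}$ relativizes to $L(\Gamma,\bR)$ because winning strategies are reals and $\bR\subseteq L(\Gamma,\bR)$. For ${\sf{AD^+}}$, every $B\in L(\Gamma,\bR)\cap\powerset(\bR)$ is definable over some $L_\alpha(\Gamma,\bR)$ from parameters in $\Gamma\cup\bR$, so an $\infty$-Borel code for $B$ can be assembled from the codes of its parameters in $\Gamma$, and ${\sf{DC}}_{\bR}$ is inherited. Lifting the Suslin trees from $\Gamma$ through the $L$-levels then shows $L(\Gamma,\bR)\models$ every set of reals is Suslin, so $L(\Gamma,\bR)\models {\sf{AD}}_{\mathbb{R}}$ by Woodin's theorem. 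Along the way one verifies $\Theta^{L(\Gamma,\bR)}=\sup\{w(A):A\in\Gamma\}=:\Theta(\Gamma)$, essentially because the tree representations factor through $\Gamma$ so that every set of the model Wadge-reduces to one in $\Gamma$.

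For regularity of $\Theta^{L(\Gamma,\bR)}$, suppose for contradiction some $f:\lambda\to\Theta(\Gamma)$ with $\lambda<\Theta(\Gamma)$ were cofinal in $L(\Gamma,\bR)$; pick $A_\alpha\in\Gamma$ with $w(A_\alpha)\geq f(\alpha)$ via a surjection $\bR\to\lambda$ in the model, and the join $\bigsqcup_{\alpha<\lambda}A_\alpha$ lies in $L(\Gamma,\bR)$ and has Wadge rank $\geq\Theta(\Gamma)$, contradiction. For $\cf^V(\Theta^{L(\Gamma,\bR)})\geq\omega_1$, any putative $\omega$-cofinal sequence $\langle\alpha_n\rangle\in V$ gives $\langle A_n\rangle\in V$ with $A_n\in\Gamma$ and $w(A_n)\geq\alpha_n$; the countable union $\bigsqcup_n A_n\in\Gamma$ then carries the sequence into $L(\Gamma,\bR)$ and contradicts the regularity just proved. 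The main obstacle is producing the Suslin representations inside $L(\Gamma,\bR)$ for arbitrary sets of reals of the model, together with the equality $\Theta^{L(\Gamma,\bR)}=\Theta(\Gamma)$; both are tree-production arguments requiring the iteration-strategy trees for hod pairs to cohere uniformly with the $L$-hierarchy over $(\Gamma,\bR)$, and the hypothesis $\Gamma\neq\powerset(\bR)$ is what places us in a proper inner model where these coherence claims have content.
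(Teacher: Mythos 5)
Your proposal takes a genuinely different route from the paper, and unfortunately the route has serious gaps at two of its load-bearing steps.

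The paper does not attempt a first-principles closure-and-lifting argument. Instead, it sketches a reduction: the proof of Theorem~\ref{existence of gg} shows that when ${\sf HPC}$ fails, $\Delta_{gen,hp}$ can be realized as the set of reals of a derived model of an iterate $\Q$ of a hod mouse $\P$, obtained by sending a strong cardinal $\kappa$ of $\P$ that is a limit of Woodin cardinals to $\omega_1$. The conclusion $L(\Delta_{gen,hp},\bR)\models \Theta_{\sf reg}$ is then read off from the main theorem of \cite{GappoSarg} about derived models at such cardinals. The regularity of $\Theta$ is thus inherited from the internal structure of the hod mouse and the derived-model analysis, not from pointclass combinatorics.

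The first gap in your argument is the step ``lifting the Suslin trees from $\Gamma$ through the $L$-levels then shows $L(\Gamma,\bR)\models$ every set of reals is Suslin.'' This inference is false in general: every set in $\utilde{\Delta}^2_1$ is Suslin, yet $L(\bR)$, which is $L$ over exactly these sets together with $\bR$, contains many sets of reals that are not Suslin in $L(\bR)$. Whether $L(\Gamma,\bR)\cap\powerset(\bR)=\Gamma$, and whether all of it is Suslin inside the model, are precisely the substantive facts one must prove; they do not follow from Wadge-closure plus Suslin representations of $\Gamma$-sets. You flag this as ``the main obstacle,'' but without the derived-model realization of $\Delta_{gen,hp}$ there is no mechanism to close it.

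The second gap is the regularity argument. You assume a cofinal $f:\lambda\to\Theta(\Gamma)$ with $\lambda<\Theta(\Gamma)$ and ``pick $A_\alpha\in\Gamma$ with $w(A_\alpha)\geq f(\alpha)$ \dots and the join $\bigsqcup_{\alpha<\lambda}A_\alpha$ lies in $L(\Gamma,\bR)$.'' This requires choosing $\lambda$-many sets of reals simultaneously, which is exactly the point at which the argument must fail without $\sf DC$. Indeed, ${\sf AD}_\bR$ alone does \emph{not} imply that $\Theta$ is regular: there are models of ${\sf AD}_\bR$ (with $\sf DC$ failing) in which $\cf(\Theta)=\omega$. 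The fact that $L(\Delta_{gen,hp},\bR)$ satisfies $\Theta$-regularity is not a soft Wadge-rank calculation; it is a theorem about a specific model, and it is obtained in the paper via the Gappo--Sargsyan analysis of derived models of hod mice at a strong cardinal that is a limit of Woodin cardinals. The $\cf\geq\omega_1$ clause, by contrast, is the easy part and your countable-join argument there is fine in spirit, modulo the (not-entirely-trivial, but believable under ${\sf NLE}$) closure of $\Delta_{gen,hp}$ under countable joins and the equality $\Theta^{L(\Gamma,\bR)}=\Theta(\Gamma)$, which again depend on the derived-model picture.
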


We will not prove \rthm{the generated portion}, but it is not hard to deduce it from the main theorem of \cite{GappoSarg}. \rthm{existence of gg}, or rather its proof, shows that assuming ${\sf{HPC}}$ fails, the least strong cardinal $\k$ of the hod pair construction $\P$ of a sufficiently strong background unverse is a limit of Woodin cardinals, and that $\Delta$ can be realized as the set of reals of the derived model of some iterate $\Q$ of $\P$ such that if $i:\P\rightarrow \Q$ is the iteration embedding then $i(\k)=\omega_1$. It then follows from the main theorem of \cite{GappoSarg} that $L(\Delta_{gen, hp}, \bR)\models \Theta_{\sf{reg}}$. We do not know if $L(\Delta_{gen, pe}, \bR)\models \Theta_{\sf{reg}}$. 

\section{Generic generators}\label{generic generator}

 We define $Coll(\omega_1, \bR)$ to be the poset consisting of countable functions $p: \omega_1\rightarrow \bR$ such that $\dom(p)\in \omega_1$ ordered by end-extensions. We do not require that $p$ is injective. We then have that if $V\models {\sf{DC}}$ and $g\subseteq Coll(\omega_1, \bR)$ is $V$-generic then $\bR^{V[g]}=\bR^V$ and $g:\omega_1\rightarrow \bR$ is a surjective function with the property that every real is enumerated unboundedly often. The property of $Coll(\omega_1, \bR)$ that we need
is that if $p, q\in Coll(\omega_1, \bR)$ are two conditions such that $\dom(p)=\dom(q)$ and $g\subseteq Coll(\omega_1, \bR)$ is $V$-generic such that $p\insegeq g$ then $g_q\subseteq Coll(\omega_1, \bR)$ defined by 
\begin{center}
$g_q(\a)=\begin{cases}
q(\a)&: \a\in dom(q)\\
g(\a)&:\text{otherwise}
\end{cases}$\end{center}
is also $V$-generic and $V[g]=V[g_q]$. 

The concept of \textit{generic generator} is the key new concept introduced in this paper.

\begin{definition}\label{generic generator def}  Suppose $\Delta\subseteq \powerset(\bR)$ and $g\subseteq Coll(\omega_1, \bR)$ is generic. We say $(\P, \Sigma)\in V[g]$ is a \textbf{pe-generic generator} for $\Delta$ if in $V[g]$,
\begin{enumerate}
\item ${\sf{Ord}}\cap \P=\omega_1$,
\item $(\P, \Sigma)$ is a pure extender pair such that $\Sigma$ acts on $L[\P]$ and $\Sigma$ is an $\omega_2$-strategy,
\item there is a club $C\subseteq \omega_1$ such that for every $\a\in C$, $\P\models ``\a$ is an inaccessible cardinal",
\item for every $\a<\omega_1$, $\Sigma_{\P|\a}\rest HC^V\in V$  and $\Sigma_{\P|\a}$ is the unique extension of $(\Sigma_{\P|\a}\rest HC^V)$ in $V[g]$, 
\item for every $\a<\omega_1$, $Code(\Sigma_{\P|\a}\rest HC^V)\in \Delta$,
\item for every $A\in \Delta$, there is $\a<\omega_1$ such that $A$ is projective in $Code(\Sigma_{\P|\a}\rest HC^V)$\footnote{This means that $A$ is definable via real parameters over the structure $(HC, Code(\Sigma_{\P|\a}), \in)$.}.
\end{enumerate}
Similarly, we say $(\P, \Sigma)$ is a \textbf{hp-generic generator} if $(\P, \Sigma)$ is a hod pair. If the nature of $\P$ is not important then we simply say that $(\P, \Sigma)$ is a \textbf{generic generator}.
\end{definition}

Our goal is to show that if $M$ is not generated then there is a generic generator for it (see \rthm{existence of gg}). However, generic generators may exist even for generated $M$. Below we give one example. Let $\N_{wlw}$ be the minimal sound active lbr premouse satisfying the sentence: ``There is a Woodin cardinal that is a limit of Woodin cardinals"\footnote{Here by \textit{minimal} we mean that no proper initial segment satisfies its defining property.}. We say $``\N_{wlw}$ exists" if there is an active sound lbr premouse $\M$ projecting to $\omega$ such that $\M$ is $\omega_1$-iterable and has a Woodin cardinal that is a limit of Woodin cardinals. 

\begin{theorem}\label{generic generator for dm} Assume ${\sf{AD^+}}$ and that $\N_{wlw}$ exists. Let $\M=\N_{wlw}$ and let $\Sigma$ be its unique $\omega_1$-iteration strategy. Let $M$ be the derived model of $(\M, \Sigma)$\footnote{By the results of \cite{DMATM}, $M$ is independent of genericity iterations and hence, exists in $V$.}. Then it is forced by $Coll(\omega_1, \bR)$ that there is a hp-generic generator for $\Delta=\powerset(\bR)\cap M$.
\end{theorem}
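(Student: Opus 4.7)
The plan is to construct $(\P,\Sigma_\P)$ as a genericity iterate of $\M$ of height $\omega_1^{V[g]}$, exploiting the fact that $\M=\N_{wlw}$ is an lbr premouse, hence already a hod mouse, whose Woodin-limit-of-Woodins $\delta$ can be mapped to $\omega_1^{V[g]}$ by an iteration that absorbs the generic enumeration of the reals.

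Working in $V[g]$, I use $g:\omega_1\to\bR$ to drive an $\omega_1$-length genericity iteration on $\M$ by $\Sigma$. Fixing a sequence $\langle\delta_\alpha:\alpha<\delta\rangle^\M$ of Woodin cardinals cofinal in $\delta$, at stage $\alpha<\omega_1$ I perform, above the current supremum of extender lengths, the Woodin genericity iteration that makes $g(\alpha)$ generic over the current model $\M_\alpha$. After $\omega_1$ many stages, the resulting tree $\T$ has a unique cofinal wellfounded branch by $\Sigma$-iterability, yielding $i:\M\to\N$ with $\N=\M^\T_b$. Standard continuity considerations at a Woodin-limit-of-Woodins force $i(\delta)=\omega_1^{V[g]}$. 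I set $\P=\N|\omega_1^{V[g]}$ and let $\Sigma_\P$ be the tail strategy induced by $\Sigma$.

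I next verify the six clauses of \rdef{generic generator def}. Clauses (1)--(3) are essentially immediate: the height of $\P$ is $i(\delta)=\omega_1^{V[g]}$, $(\P,\Sigma_\P)$ is an lbr pair because $\M$ is and iteration preserves this, and the images $i_{0,\alpha}(\delta_\beta)$ form a club in $\omega_1^{V[g]}$ of Woodin cardinals of $\P$, each inaccessible in $\P$. For clauses (5) and (6), I invoke the derived model theorem: every $A\in\Delta$ is projective in the code of the tail strategy on some countable $\Sigma$-iterate of $\M$, which via the genericity iteration can be realized as an initial segment of $\P$, so $A$ is projective in $\text{Code}(\Sigma_{\P|\alpha}\rest HC^V)$ for all sufficiently large $\alpha$; conversely each such strategy code is itself a set of reals of the derived model $M$, hence lies in $\Delta$.

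The main obstacle is clause (4), the uniqueness of extensions of $\Sigma_{\P|\alpha}\rest HC^V$ from $V$ to $V[g]$. That $\Sigma_{\P|\alpha}\rest HC^V$ lies in $V$ follows from the standard uniqueness of $\omega_1$-iteration strategies for countable lbr premice together with the fact that $\P|\alpha$ embeds back into a countable $\Sigma$-iterate that is already in $V$. Uniqueness in $V[g]$ requires that $Coll(\omega_1,\bR)$ adds no new cofinal branches to iteration trees on countable lbr mice that lie in $V$; this in turn rests on the branch-condensation properties of least-branch strategies developed in \cite{SteelCom}. A secondary technical subtlety is to check that the derived model of $\N$ at $\omega_1^{V[g]}$, computed symmetrically in $V[g]$, really coincides with $M$, which is a standard consequence of \cite{DMATM} once one observes that the genericity iterations at the $\delta_\alpha$ make $\bR^V$ the symmetric reals of $\N$ at $i(\delta)$.
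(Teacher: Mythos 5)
Your approach takes a genuinely different route from the paper's, and the substitution loses the mechanism that makes clause~6 of Definition~\ref{generic generator def} (the Generation Condition) hold. The paper does not drive the iteration by genericity at the Woodin cardinals. Instead it fixes a $<\lambda$-strong cardinal $\kappa < \lambda$ of $\M$ and at each stage $\alpha$ takes a single ultrapower $\M_{\alpha+1} = Ult(\M_\alpha, E_\alpha)$ with $\cp(E_\alpha) = \kappa_\alpha := \pi_{0,\alpha}(\kappa)$. The extender $E_\alpha$ is not arbitrary: the paper first iterates $\M_\alpha$ above $\kappa_\alpha$ via some $\T_\alpha$ to a model $\M_\alpha^*$ in which the $\alpha$-th set $A_\alpha \in \Delta$ acquires a $<\pi^{\T_\alpha}(\pi_{0,\alpha}(\lambda))$-uB code below a Woodin $\zeta$ of $\M_\alpha^*$, and then chooses $E_\alpha \in \vec{E}^{\M_\alpha^*}$ with $\cp(E_\alpha) = \kappa_\alpha$ and $\lh(E_\alpha) > (\zeta^+)^{\M_\alpha^*}$. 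Coherence gives $\M_{\alpha+1}|\lh(E_\alpha) = \M_\alpha^*|\lh(E_\alpha)$, so the capturing iterate and its tail strategy are literally initial segments of the next model, making $A_\alpha$ projective in $Code(\Sigma_{\M_{\alpha+1}|\lh(E_\alpha)})$. The strong cardinal $\kappa$ is what supplies extenders of unbounded length at $\kappa_\alpha$ so that this squashing ultrapower exists, and it is also what forces $\sup_\alpha \kappa_\alpha = \omega_1^{V[g]}$, giving the correct cut $\P = \P^+|\omega_1$.

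The gap in your argument appears precisely where you verify clause~6. You assert that every $A \in \Delta$ is projective in the code of a tail strategy on a countable $\Sigma$-iterate of $\M$, and that this iterate ``via the genericity iteration can be realized as an initial segment of $\P$.'' The first half is plausible; the second half is unjustified and is where the proof breaks. The genericity iterate $\N$ is one particular $\Sigma$-iterate of $\M$, built purely to absorb the reals $g(\alpha)$; the iterate $\Q$ whose tail strategy captures $A$ is generally a completely different one, and nothing in your construction aligns $\Q|\eta$ with $\N|\alpha$ for any $\alpha < \omega_1$. No coiteration or resurrection argument is given to repair the mismatch. Making the reals of $V$ generic over the iterates does not by itself make the strategy codes of the relevant cutpoint initial segments cofinal in $\Delta$; that is a separate requirement, and the paper arranges it step by step via the squashing-by-$E_\alpha$ device. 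Repairing your argument would force you to interleave capturing iterations into each stage and then compress them back down, which is exactly the step your proposal omits and which requires the strong cardinal $\kappa$.
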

\begin{proof} Because the main purpose of the theorem is to illustrate how generic generators come to exist, we will only outline the proof.  In particular, we will not explain the meaning of ``$M$ be the derived model of $(\M, \Sigma)$". The main issue here is the independence of $M$ from genericity iterations. Steel showed such independence for ordinary mice in \cite{DMATM}, and these methods also work in the context of hod mice as well. We will not use the current theorem in this paper. 

Let $\l$ be the Woodin cardinal of $\M$ that is a limit of Woodin cardinals. Let $\k<\l$ be a $<\l$-strong cardinal of $\M$. Let $g\subseteq Coll(\omega_1, \bR)$ be generic. Working inside $V[g]$ we describe an $\omega_1$-sequence $(\M_\a, \pi_{\a, \b}: \a<\omega_1)$ of iterates of $\M$ such that 
\begin{enumerate}
\item $\M_0=\M$,
\item for every $\a<\omega_1$, $\M_{\a+1}$ is a non-dropping $\Sigma_{\M_\a}$-iterate of $\M_\a$,
\item $\pi_{\a, \a+1}:\M_\a\rightarrow \M_{\a+1}$ is the iteration embedding,
\item for a limit $\b<\omega_1$, $\M_\b$ is the direct limit of $(\M_\a, \pi_{\a, \gg}: \a<\gg<\b)$ and for $\a<\b$, $\pi_{\a, \b}:\M_\a\rightarrow \M_\b$ is the direct limit embedding,
\item letting $\k_\a=\pi_{0, \a}(\k)$,  $\M_{\a+1}=Ult(\M_\a, E_\a)$ for some extender $E_\a$ with $\cp(E_\a)=\k_\a$, 
\item (Generation Condition:) for every $A\in M$ there is $\a<\omega_1$ such that 
$A$ is projective in $Code(\Sigma_{\M_{\a+1}|\lh(E_\a)})$.
\end{enumerate} 
Condition 6 above, which we call the Generation Condition, is the most important condition. Given the sequence we let $\P^+$ be the direct limit of $(\M_\a, \pi_{\a, \b}: \a<\b<\omega_1)$, and set $\P=\P^+|\omega_1$. Let $\Lambda=\Sigma_\P$. Our strategy is to show that $(\P, \Lambda)$ is a generic generator for $M$.  Generation Condition above guarantees that clause 5 of \rdef{generic generator def} holds. The results of \cite{ADRUB} guarantee that $\Lambda$ can be extended to an $\omega_2$-strategy in $V[g]$\footnote{Unfortunately, as pointed out by the referee, \cite[Theorem 2.1]{ADRUB} is proved assuming ${\sf{AD}}_{\mathbb{R}}$, while here we stated our result under ${\sf{AD^+}}$. However, it is not hard to modify the proof of \cite[Theorem 2.1]{ADRUB} to make it work under ${\sf{AD^+}}$. There are number of ways to prove \cite[Theorem 2.1]{ADRUB}. One way is to use the results of \cite{KKMW} where it is shown that $\Theta$ is a limit of cardinals with a strong partition property. It is important to note that the notion of ``strong partition property" used in \cite{KKMW} is stronger than the one used in \cite{St09DMT}, and this stronger version shows that under ${\sf{AD}}_{\mathbb{R}}$, every set of reals is $\k$-homogenously Suslin for every $\k<\Theta$. The proof also shows that under ${\sf{AD^+}}$, every Suslin, co-Suslin set is $\k$-homogenously Suslin for every $\k<\Theta$. In the past, the author was unaware of this discrepancy in the descriptive set theoretic terminology, and didn't immediately see how to generalize the results of \cite{St09DMT} to conclude that in fact every set of reals is $\k$-homogenously Suslin for every $\k<\Theta$. That this is possible was pointed out to the author by Lukas Koschat. One can alternatively redo the proof of \cite[Theorem 2.1]{ADRUB} and show that under ${\sf{AD^+}}$ every Suslin, co-Suslin set of reals is $<\Theta$-uB. The proof presented in  \cite[Theorem 2.1]{ADRUB} uses the Solovay measure on $\powerset_{\omega_1}(\bR)$ which cannot be shown to exist from just ${\sf{AD^+}}$. However, one can use $\Sigma^2_1$-reflection to avoid this issue. Indeed, assuming our $\Sigma$ is not $<\Theta$-uB, we can find some $W$ such that $\powerset(\bR)\cap W\subseteq \utilde{\Delta}^2_1$ and in $W$, $\Sigma$ is not $<\Theta^W$-uB. But now we do have a supercompactness measure that measures subsets of $\powerset_{\omega_1}(\bR)$ that are in $W$. This supercompactness measure can then be used to carry out the proof presented in \cite[Theorem 2.1]{ADRUB}.}

We finish by showing how to arrange the Generation Condition. Because $M\subseteq L(\Sigma, \bR)$ and $M\models {\sf{AD_{\mathbb{R}}}}$ we have that $\Sigma\not \in M$. It follows that in $V[g]$, $\card{\powerset(\bR)^M}=\omega_1$. Fix then an enumeration $(A_\a: \a<\omega_1)\in V[g]$ of $\powerset(\bR)^M$. Our intention is to ``add" each $A_\a$ to $\M_{\a+1}$. 

We have that the derived model of $\M_\a$ as computed by $\Sigma$ is $M$, and the computation of the aforementioned derived model is independent of the particular way the genericity iteration is performed. It follows that we can find an iteration tree $\T_\a$ on $\M_\a$ according to $\Sigma_{\M_\a}$ that is above $\kappa_\a$ and has last model $\M_\a^*$ such that for some $\nu<\pi^\T(\pi_{0, \a}(\l))$, whenever $h\subseteq Coll(\omega, <\nu)$ is $\M_\a^*$-generic, $A$ has a $<\pi^\T(\pi_{0, \a}(\l))$-universally Baire code in $\M_\a^*[h]$. 

Fix now a $\zeta\in (\nu, \pi^\T(\pi_{0, \a}(\l)))$ that is a Woodin cardinal of $\M_\a^*$ and let $E\in \vec{E}^{\M_\a^{*}}$ be such that $\cp(E)=\k_\a$ and $\lh(E)>(\zeta^+)^{\M_\a^*}$. Set $\M_{\a+1}=Ult(\M_\a, E)$. Then whenever $h\subseteq Coll(\omega, (\zeta^+)^{\M_{\a+1}})$ is $\M_{\a+1}$-generic, $A$ has a $<\pi_{E}^{\M_\a}(\pi_{0, \a}(\l))$-universally Baire code in $\M_{\a+1}[h]$. This is because
 $A\in \utilde{\Delta}^1_1(Code(\Sigma_{\M_{\a+1}|\zeta}))$. 
\end{proof}

\begin{remark} \normalfont
    Assume that the minimal active mouse with a Woodin cardinal that is a limit of Woodin cardinals exists and let $\M$ be this mouse and $\Sigma$ be its strategy. Then the proof of \ref{generic generator for dm} can be used to obtain a pe-generic generator for the derived model of $\M$.
\end{remark}

\section{Production of generic generators}

We will consider fully backgrounded constructions of sufficiently strong backgrounds. \cite[Chapter 4.1]{LSA} introduces all the necessary terminology. \cite[Definition 4.1.8]{LSA} defines the meaning of Suslin, co-Suslin capturing. \cite[Theorem 4.1.12]{LSA}, which is due to Woodin, is our main source of backgrounds. \cite{ADRUB} outlines a proof of \cite[Theorem 4.1.12]{LSA}. 

Suppose $V$ is a model of ${\sf{AD_{\mathbb{R}}}}+{\sf{NLE}}+\neg{\sf{HPC}}+V=L(\powerset(\bR))$. Set $\Delta=\Delta^V_{gen}$. Because we are assuming ${\sf{AD_{\bR}}}$ and $\Delta\not =\powerset(\bR)$, we have that there are good pointclasses beyond $\Delta$. Suppose that $(\mathbb{M}, (P, \Psi), \Gamma^*, A)$ Suslin, co-Suslin captures some good pointclass that is Wadge beyond $\Delta$. It is then not unreasonable to hope that the hod pair construction of $\mathbb{M}$ reaches a hod pair beyond $\Delta$. This has been the main strategy for establishing ${\sf{HPC}}$ and other statements like it, and was successfully implemented both in \cite{HMMSC} and in \cite{LSA}. However, \cite{HMMSC} and \cite{LSA} assume more than just ${\sf{NLE}}$: the first assumes no hod mouse with an inaccessible limit of Woodins and the second no transitive inner model of ${\sf{LSA}}$, both of which are stronger assumptions than ${\sf{NLE}}$. We are unable to prove, assuming just ${\sf{NLE}}$, that the hod pair construction of $\mathbb{M}$ reaches a hod pair beyond $\Delta$, but our failure bears some fruits as it produces generic generators.


 


In this paper, the expression ``some fully backgrounded construction" refers to a $K^c$ construction in which the extenders used are total extenders. The constructions we have in mind may produce pure premice or lbr premice or anything else. In addition, if the background universe has a distinguished extender sequence then we assume that all background extenders come from that extender sequence. There are many types of such fully backgrounded constructions. For example, the reader may wish to consult \cite[Definition 3.1.1]{SteelCom} and also \cite[Definition 3.2]{FarmBC}. These two definitions are slightly different: they impose different initial segment conditions on the models built by backgrounded constructions. 

When we say that ``$\M$ is a model appearing on some fully backgrounded construction" we mean that $\M$ is one of the models constructed by the named construction. Also, if $\M$ is a mouse or an lbr mouse and $\phi$ is a large cardinal property then $M\models \phi[\kappa]$ means that in $\M$, $\k$ has the large cardinal property $\phi$ as witnessed by the extender sequence of $\M$. 

 \begin{lemma}\label{strongs in the construction} Suppose $\d$ is an inaccessible cardinal and suppose $\M$ is a model appearing on some fully backgrounded construction of $V_\d$. Suppose\footnote{$\rho_{\omega}(\M)=\rho(\M)$ is the eventual projectum of $\M$.} $\k<\rho_{\omega}(\M)=\rho(\M)$ is such that $\M\models ``\k$ is a measurable cardinal". Then there is no total extender $E$ on the sequence of $\M$ such that $\cp(E)=\k$ and the resurrection\footnote{The resurrection of $E$ is an extender $F$ that appears in some model of the construction and eventually cores down to $E$. Moreover, $F$ is a restriction of a background extender. See \cite[Chapter 12]{FSIT} or \cite[Chapter 3.1]{SteelCom}.} of $E$ has critical point $>\k$. 
 \end{lemma}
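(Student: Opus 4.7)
The plan is to argue by contradiction, assuming there is a total extender $E$ on the sequence of $\M$ with $\cp(E)=\k$ whose resurrection $E^{*}$ satisfies $\cp(E^{*})=\k^{*}>\k$. I would first unpack the resurrection as produced by the fully backgrounded construction: $E^{*}$ lives on the sequence of some earlier model $\M^{*}$ and is directly certified by a $V$-background extender $F$ with $\cp(F)=\k^{*}$; the correspondence $E\leftrightarrow E^{*}$ is realized by the composition $\sigma$ of the anti-core (uncollapse) embeddings produced at the coring steps lying between the stages of $\M^{*}$ and $\M$. By definition of resurrection, $\sigma(E)=E^{*}$ and $\sigma(\k)=\k^{*}$, so in particular $\cp(\sigma)\leq\k$.

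The main step is to show that every anti-core embedding $h\colon\core(\N)\to\N$ appearing in $\sigma$ has critical point exceeding $\k$; this contradicts $\cp(\sigma)\leq\k$. Since $\cp(h)=\rho(\core(\N))=\rho(\N)$, this reduces to showing $\rho(\N')>\k$ for every core $\N'$ in the anti-core chain from $\M$ back to $\M^{*}$. The base case is $\rho(\M)>\k$, which is assumed. For the inductive step, I would appeal to the fact that coring preserves the projectum, and to the standard property of fully backgrounded constructions that along the coring chain relevant to $\sigma$ the projectums of the cores are non-decreasing as one moves back in the construction (cf.\ \cite[Chapter 3]{SteelCom} and \cite[Chapter 4.1]{LSA}).

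The hypothesis that $\k$ is measurable in $\M$ together with $\k<\rho(\M)$ provides the fine-structural rigidity needed to rule out a projectum drop to $\k$ or below at each step. Fixing a witness normal measure $U\in\M$ on $\k$, the inequality $\k<\rho(\M)$ places $U$ inside $\M|\rho(\M)$, so the initial anti-core from $\M$ fixes both $\k$ and $U$. As long as all anti-cores encountered so far have critical point $>\k$, the image of $U$ survives as a normal measure on $\k$ in the corresponding intermediate core $\N'$; the presence of this measure then obstructs a drop $\rho(\N')\leq\k$, since such a drop would force $\k$ to lie in the hull generated in $\N'$ by parameters strictly below $\rho(\N')$, conflicting with the initial segment condition on $\N'$'s extender sequence together with the coherence imposed by $F$.

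The hard part will be making this last step rigorous: ruling out a projectum drop along the anti-core chain by playing the preserved measure on $\k$ against the background $F$ and the coherence relations in the pre-core models. Assuming this step goes through as anticipated, each anti-core in $\sigma$ has critical point exceeding $\k$, so $\sigma$ fixes $\k$, giving $\cp(E^{*})=\sigma(\k)=\k$, contradicting $\cp(E^{*})=\k^{*}>\k$ and completing the proof.
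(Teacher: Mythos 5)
The proposal goes in a genuinely different direction from the paper, and I believe the key inductive step cannot be carried out. Your plan is to show that \emph{no} projectum drop to $\leq\kappa$ occurs along the resurrection chain, so that the resurrection map $\sigma$ fixes $\kappa$ and hence $\cp(E^{*})=\kappa$. The paper's proof does the opposite: it \emph{establishes} that the chain must contain a stage $\N_{\xi_n-1}$ with $\rho(\N_{\xi_n-1})=\kappa$ exactly (this is forced by $\cp(G)=\cp(F)>\kappa$ together with $\cp(E)=\kappa$: the critical point of the composite anti-core map must be $\leq\kappa$, and the totality/measurability data pins the drop at $\kappa$ rather than below). The contradiction is then drawn not by refuting the drop but by exploiting it: $\rho(\M_{\xi_n})=\kappa$ means $rud(\M_{\xi_n})$ collapses $(\kappa^{+})^{\M_{\xi_n}}$, so $\M$ contains a subset of $\kappa$ not present in $\M\|\lh(E)$, contradicting the totality of $E$ over $\M$.

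The gap in your argument is the inductive step ``the presence of this measure obstructs a drop $\rho(\N')\leq\kappa$.'' This is simply not true: a premouse can project to a cardinal that is measurable in it (the classic examples are sharps and $\M_n^{\#}$-like mice, which project to $\omega$ while having measurables above), and there is nothing about the initial segment condition or the coherence of the background extender $F$ that rules this out. Indeed, the paper's own proof produces exactly such a drop. Similarly, the appeal to ``projectums of the cores are non-decreasing as one moves back in the construction'' is not a standard fact; the resurrection chain in \cite[Chapter 12]{FSIT} is defined precisely by successive projectum drops as one moves \emph{forward}, and nothing forces monotonicity in the backward direction relative to $\M$. So the ``hard part'' you flag at the end is, in my view, unfixable along these lines: you would be trying to refute a drop that the hypothesis of the contradiction genuinely entails. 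The measurability of $\kappa$ and the assumption $\kappa<\rho(\M)$ do enter the paper's argument, but their role is to locate the drop at $\kappa$ and to make the resulting collapse of $(\kappa^{+})^{\M_{\xi_n}}$ incompatible with $E$ being total on $\M$, not to prevent the drop from happening.
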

 \begin{proof} Suppose not. Let $\k$ be a measurable cardinal of $\M$ and let $E$ be a total extender on the sequence of $\M$ with $\cp(E)=\k$ such that letting $F$ be its resurrection, $\cp(F)=_{def}\l>\k$. Let $\xi$ be the stage of the backgrounded construction where $F$ is added to the construction. Thus, $\N_\xi=(\M_\xi, G)$ where $G=F\cap \M_\xi$. 
 
 Because $G$ collapses down to $E$ via core embeddings, we must have a strictly increasing sequence of $(\xi_i: i\leq n)$, $(G_i: i\leq n)$ and $(\pi_i: i\leq n)$ such that setting $G_{-1}=G$,
 \begin{enumerate}
\item $\xi_0=\xi+1$,
 \item $\pi_{i}: \M_{\xi_i}\rightarrow \N_{\xi_i-1}$ is the inverse of the core embedding,
 \item $G_{i+1}=\pi_{i+1}^{-1}(G_i)$ for all $i\in [-1,n-1]$,
 \item for all $i\in [0,n-1]$, $\xi_{i+1}$ is the least $>\xi_i$ such that $\M_{\xi_i}\insegeq \N_{\xi_{i+1}-1}$ and $\rho(\N_{\xi_{i+1}-1})<\lh(G_i)$, and
 \item $\rho(\N_{\xi_{n}-1})=\k$. 
 \end{enumerate}
 We then have that $E=\pi_n^{-1}(G_{n-1})$. However, $rud(\M_{\xi_n})\models ``(\k^+)^{\M_{\xi_n}}$ is not a cardinal". It follows that $E$ cannot be total over $\M$. 
 \end{proof}

\begin{corollary}\label{cor to strongs in the construction} Suppose $V$ is a premouse or an lbr premouse, $\d$ is an inaccessible cardinal and $\M$ is a model appearing on some fully backgrounded construction of $V_\d$. Suppose ${\sf{Ord}}\cap \M=\d$ and $\k$ is such that $\M\models ``\k$ is a strong  cardinal". Then $V_\d\models ``\k$ is a strong cardinal". 
  \end{corollary}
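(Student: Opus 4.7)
The plan is to fix an arbitrary $\lambda<\d$ and produce, in $V_\d$, a total extender with critical point $\k$ witnessing $\lambda$-strength, so that $V_\d\models\k$ is strong. The lemma just proved is precisely the tool needed to ensure that strength in $\M$ lifts to strength in $V$ rather than getting shifted up to some $\k'>\k$ at the background level.

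First I would note that the hypothesis of \rlem{strongs in the construction} applies to $\k$: we have $\k<\d={\sf{Ord}}\cap \M$, and because $\M$ is the top model of the backgrounded construction of height $\d$, it is fully sound with $\rho_\omega(\M)=\d>\k$; moreover $\k$ is measurable in $\M$ since it is strong there. Now fix $\lambda<\d$. Using that $\M\models ``\k$ is strong\rq\rq, pick a total extender $E$ on the $\M$-sequence with $\cp(E)=\k$ which witnesses $\lambda$-strength of $\k$ in $\M$, i.e. $\lh(E)>\lambda$ and $V_\lambda^\M\subseteq \mathrm{Ult}(\M,E)$ with the usual agreement.

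Next let $F$ be the resurrection of $E$, so that $F$ is an extender on some model $\N_\xi$ of the backgrounded construction and is witnessed by a background extender $F^*$ on the $V$-sequence (recall that $V$ is assumed to be itself a premouse or lbr premouse, and backgrounds come from its sequence). By construction $\cp(F)\geq\cp(E)=\k$, and by \rlem{strongs in the construction} we cannot have $\cp(F)>\k$; hence $\cp(F)=\k$. Consequently $\cp(F^*)=\k$, and $F^*$ is total in $V$ with $\lh(F^*)\geq\lh(F)>\lambda$.

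The remaining step, and the main obstacle, is to turn the $\M$-side $\lambda$-strength of $E$ into the $V$-side $\lambda$-strength of $F^*$. This is the standard agreement calculation for fully backgrounded constructions: the resurrection maps commute with the embeddings determined by $E$, $F$ and $F^*$ up to the generators below $\lh(E)$, and the background condition forces $F^*$ and $F$ to agree on those generators. Combining this with the fact that $V_\lambda^\M$ is computed inside the construction from $V_\lambda^V$ in the expected way, one obtains $V_\lambda^V\subseteq \mathrm{Ult}(V,F^*)$. Since $F^*\in V_\d$ and $\lambda<\d$ was arbitrary, $V_\d\models ``\k$ is a strong cardinal\rq\rq.
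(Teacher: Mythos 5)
Your proposal is correct and takes essentially the same route as the paper: both use \rlem{strongs in the construction} to conclude that the critical point of the resurrection (and hence of its certifying background extender on $V$'s sequence) is exactly $\k$, and then observe that since $\k$ is strong in $\M$ there are such background extenders of unbounded length below $\d$.

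The one place you diverge is in treating the passage from ``$F^*$ has critical point $\k$ and $\lh(F^*)>\lambda$'' to ``$V_\lambda\subseteq\mathrm{Ult}(V,F^*)$'' as the main obstacle. It isn't: the hypothesis is that $V$ is itself a premouse (or lbr premouse) and background extenders are taken from its distinguished sequence, so $F^*$ appears on $\vec{E}^V$ and coherence of the sequence already gives the required agreement between $V$ and $\mathrm{Ult}(V,F^*)$ up to the index of $F^*$. The paper's proof therefore stops after the observation that $\vec{E}^V$ contains extenders with critical point $\k$ of unbounded length below $\d$, which is exactly the meaning of ``$V_\d\models\k$ is strong'' under the convention stated just before \rlem{strongs in the construction}. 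Your ``agreement calculation'' step is harmless but superfluous, and as written it is sketchier than the rest of the argument, so you would do well to drop it in favor of simply invoking coherence of $\vec{E}^V$.
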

  \begin{proof} It follows from \rlem{strongs in the construction} that the resurrections of all total extenders on the sequence of $\M$ with critical point $\k$ must have a critical point $\k$. It follows that for each $\nu$ there is an extender $E$ on the sequence of $V$ with critical point $\kappa$ and $\lh(E)>\nu$. Hence, $\k$ must be a strong cardinal of $V_\d$.
  \end{proof}

\rthm{existence of gg} is the main theorem of this section. 

\begin{notation} $\cop(\T)$ is the common part of $\T$ and is customarily denoted by $\M(\T)$. Since $\M$ is overused in inner model theory, we will use $\cop$ notation.
\end{notation}

We will also use the following lemma, which is the restatement of \cite[Theorem 4.1.12]{LSA}. Recall from \cite{Mo09} that if $\Gamma$ is a lightface pointclass then $\utilde{\Gamma}$ is its boldface version.

\begin{theorem}[Woodin, Theorem 10.3 of \cite{DMATM}]\label{n*x} Assume $ \sf{AD}^+$. Suppose $\Gamma$ is a good pointclass and there is a good pointclass $\gG^*$ such that $\Gamma\subseteq \utilde{\Delta}_{\gG^*}$. Suppose $(N, \Psi)$ is $\gG^*$-Woodin which Suslin, co-Suslin captures some $A\in lub(\Gamma)$. There is then a function $F$ defined on $\mathbb{R}$ such that for a Turing cone of $x$, $F(x)=(\N^*_x, \M_x, \d_x, \Sigma_x)$ is such that
\begin{enumerate}
\item $N\in L_1[x]$,
\item $\N^*_x|\d_x=\M_x| \d_x$,
\item $\M_x$ is a $\Psi$-mouse over $x$: in fact, $\M_x=\M_1^{\Psi, \#}(x)|\k_x$ where $\k_x$ is the least inaccessible cardinal of $\M_1^{\Psi, \#}(x)$ that is $>\d_x$,
\item $\N^*_x\models ``\d_x$ is the only Woodin cardinal",
\item $\Sigma_x$ is the unique iteration strategy of $\M_x$,
\item $\N^*_x= L(\M_x, \Lambda)$ where $\Lambda=\Sigma_x\rest \dom(\Lambda)$ and 
\begin{center}
$\dom(\Lambda)=\{\T\in \M_x: \T$ is a normal iteration tree on $\M_x$, $\lh(\T)$ is a limit ordinal and $\T$ is below $\d_x\}$,
\end{center}
\item setting $\vec{G}=\{(\a, \vec{E}^{\N^*_x}(\a)): \N^*_x\models  ``\lh(\vec{E}^{\N^*_x}(\a))$ is an inaccessible cardinal $<\d_x"\}$ and $\mathbb{M}_x=(\N^*_x, \d_x, \vec{G}, \Sigma_x)$, $(\mathbb{M}_x, (N, \Psi), \Gamma^*, A)$ Suslin, co-Suslin captures $\Gamma$\footnote{Hence, $(\N^*_x, \d_x, \vec{G}, \Sigma_x)$ is a self-capturing background.}. 
\end{enumerate}
\end{theorem}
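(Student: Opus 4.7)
The plan is to construct the local self-capturing backgrounds $\mathbb{M}_x$ for a Turing cone of $x$ via a relativized mouse construction driven by the outer capturing pair $(N,\Psi)$, and then to verify Suslin, co-Suslin capturing for all of $\Gamma$. First I would show that the $\Psi$-mouse operator $x \mapsto \M_1^{\Psi,\#}(x)$ is total on a cone. Working inside $N$, I run a fully backgrounded $L[\vec E, \Psi]$-construction over $x$, allowing only total $\Psi$-certified extenders from the $N$-sequence. Since $(N,\Psi)$ is $\Gamma^*$-Woodin and Suslin, co-Suslin captures some $A\in lub(\Gamma)$, $N$ carries enough Woodin cardinals above $x$ for this construction to reach a Woodin cardinal $\d_x$, and iterability of countable substructures is forced by the standard ``bad sequence'' argument: any failure of iterability would produce a set in $\utilde{\Delta}_{\Gamma^*}$ beyond $\Gamma$ and incompatible with the capturing of $A$. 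The uniqueness of the strategy $\Sigma_x$ in clause (5) follows from the relevant premouse being $\omega$-sound and projecting to $x$.

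Once $\M_x = \M_1^{\Psi,\#}(x)|\k_x$ is in hand, where $\k_x$ is the least inaccessible of $\M_1^{\Psi,\#}(x)$ above $\d_x$, I set $\Lambda = \Sigma_x \rest \dom(\Lambda)$ as in clause (6) and define $\N^*_x = L(\M_x, \Lambda)$. Because the material added above $\d_x$ is constructed purely from the pair $(\M_x, \Lambda)$ without appealing to new background extenders, clauses (2) and (4) are immediate: no new bounded subsets of $\d_x$ appear, and the rigidity of $L$-construction over a strategy predicate rules out further Woodin cardinals. Clauses (1) and (3) are then built-in features of the $\Psi$-Woodin setup, and packaging the total initial segments of $\vec{E}^{\N^*_x}$ of inaccessible length below $\d_x$ into $\vec G$ yields the background $\mathbb{M}_x$.

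The main obstacle is clause (7): showing that $(\mathbb{M}_x, (N,\Psi), \Gamma^*, A)$ Suslin, co-Suslin captures \emph{all} of $\Gamma$, not merely $A$. The idea is that a genericity iteration of $\mathbb{M}_x$ at $\d_x$ absorbs any given real into a generic extension of an iterate, while the $\Gamma^*$-universally Baire representation of $A$ inside $N$, combined with Moschovakis' coding lemma and the definability of the strategy fragment $\Lambda$ in the Wadge-predecessors of $A$, produces $\d_x$-uB representations of every set in $lub(\Gamma)$ uniformly in $x$ on the cone. The delicate point is arranging this uniform representation of the \emph{whole} pointclass simultaneously with the required self-capturing coherence between $\Sigma_x$ and $\Psi$; this is what promotes $\mathbb{M}_x$ from a witness for $A$ to a self-capturing background for $\Gamma$.
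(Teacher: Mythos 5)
The paper does not prove \rthm{n*x}; it is imported verbatim as Woodin's Theorem~10.3 of \cite{DMATM} (equivalently, as a restatement of \cite[Theorem~4.1.12]{LSA}, with \cite{ADRUB} cited as containing an outline of a proof). So there is no in-paper argument to compare your proposal against. What follows is an assessment of your sketch on its own terms.

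Your overall direction is the standard one: use the capturing pair $(N,\Psi)$ to verify that the $\Psi$-mouse operator $x\mapsto\M_1^{\Psi,\#}(x)$ is total on a cone, cut at the first inaccessible above the Woodin $\d_x$, and then close off under constructibility together with the appropriate fragment of the iteration strategy. But there is a concrete error in your argument for clauses~(2) and~(4). You write that because ``the material added above $\d_x$ is constructed purely from the pair $(\M_x,\Lambda)$ without appealing to new background extenders,'' no new bounded subsets of $\d_x$ appear. That inference is not sound: $L(\M_x,\Lambda)$ is a constructibility closure over a strategy predicate, and nothing a priori prevents $\Lambda$ from coding new subsets of $\d_x$. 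The actual reason $\N^*_x|\d_x=\M_x|\d_x$ holds is a local definability fact: the relevant fragment of $\Sigma_x$ acting on trees below $\d_x$ is already definable over $\M_x$ (for a one-Woodin $\Psi$-mouse the strategy below $\d_x$ is recovered by $Q$-structures inside the model), so $\Lambda$ is effectively given by $\M_x$ and adds nothing bounded. Your sketch skips this step, and without it the claim is circular. The same local definability is needed to conclude that $\d_x$ remains a Woodin cardinal of $\N^*_x$ (your phrase ``rigidity of $L$-construction over a strategy predicate rules out further Woodin cardinals'' addresses only the ``no extra Woodins'' half and not the preservation of Woodinness at $\d_x$, and even that half needs an argument, not an appeal to rigidity).

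For clause~(7) you correctly identify that promoting capturing of $A$ to Suslin, co-Suslin capturing of all of $\Gamma$ via genericity iterations is the heart of the theorem, but what you offer there is a one-paragraph plausibility sketch (``genericity iteration absorbs any given real,'' ``coding lemma produces $\d_x$-uB representations uniformly''). That is a correct description of the ingredients, but it is not a proof; the uniform self-capturing coherence between $\Sigma_x$ and $\Psi$, which you flag as ``the delicate point,'' is precisely where the content lies and is left unaddressed. So as submitted the attempt has a real gap in (2)/(4) and leaves (7) at the level of intuition.
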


\begin{remark}
    We assume that the $\N^*_x$ model introduced in \rthm{n*x} is in ms-indexing introduced in \cite{FSIT}. This will play a crucial role in the next proof. 
\end{remark}

In the proof of the next theorem,  ${\sf{NLE}}$ is used to gurantee universality (see \cite[Chapter 8.1]{SteelCom}). 

\begin{theorem}\label{existence of gg} Suppose $V\models {\sf{AD_{\mathbb{R}}}}+{\sf{NLE}}+\neg{\sf{LEC}}$ and set $\Delta=\Delta^V_{gen, pe}$. Let $g\subseteq Coll(\omega_1, \bR)$. Then there is a pe-generic generator for $\Delta$ in $V[g]$.

Similarly, if $V\models {\sf{AD_{\mathbb{R}}}}+{\sf{NLE}}+\neg{\sf{HPC}}$ then there is a hp-generic generator for $\Delta_{gen, hp}$ in $V[g]$.
\end{theorem}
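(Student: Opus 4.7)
I would model the argument on the proof of \rthm{generic generator for dm}, replacing the canonical mouse $\N_{wlw}$ with the output of a fully backgrounded hod pair (resp.\ pure extender pair) construction, and using $\neg{\sf{HPC}}$ (resp.\ $\neg{\sf{LEC}}$) together with ${\sf{NLE}}$ to force the construction to reach a model with the correct internal structure.

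First I would fix a good pointclass $\Gamma^*$ Wadge beyond $\Delta$: since $V\models {\sf{AD}}_{\mathbb{R}}$ and by assumption $\Delta\neq \powerset(\bR)$, such $\Gamma^*$ exists, and by \rthm{n*x} we obtain, for a cone of reals $x$, a self-capturing background $\mathbb{M}_x=(\N^*_x,\d_x,\vec{G},\Sigma_x)$ that Suslin, co-Suslin captures a good pointclass beyond $\Delta$. Inside $\N^*_x$ I run the fully backgrounded pure extender (resp.\ hod pair) construction, using only background extenders drawn from $\vec{G}$. Call the resulting model $\P_0$, of ordinal height $\d_x$. By ${\sf{NLE}}$ the construction is universal in the sense of \cite[Chapter 8.1]{SteelCom}, so $\P_0$ exists and its strategy, induced by $\Sigma_x$, is fullness preserving and captures every pair strictly below $\Delta$.

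Next I would argue that $\P_0$ has a least strong cardinal $\k$ which is a limit of Woodin cardinals, and that $\Delta$ is precisely the set of reals of the derived model of $\P_0$ at $\k$. If $\k$ were not a limit of Woodins, then by \rcor{cor to strongs in the construction} and standard hull/condensation arguments one could read off from $\P_0|\k$ (or from an appropriate initial segment projecting across $\k$) a pure extender (resp.\ hod) pair $(\Q,\Lambda)$ with $Code(\Lambda)\notin \Delta$, contradicting $\neg{\sf{LEC}}$ (resp.\ $\neg{\sf{HPC}}$); similarly, if the derived model at $\k$ were strictly weaker than $\Delta$, the next step of the construction above $\k$ would generate a pair outside $\Delta$, again contradicting the hypothesis. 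This is the step where ${\sf{NLE}}$ is essential: without it we could not rule out long extenders appearing at or below $\k$, and the identification of the derived model with $\Delta$ would fail.

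Now fix $g\subseteq Coll(\omega_1,\bR)$ generic and work in $V[g]$. Using $g$ to enumerate $\Delta$ as $(A_\a:\a<\omega_1)$, I would construct a sequence of $\Sigma$-iterates $(\M_\a,\pi_{\a,\b}:\a<\b<\omega_1)$ exactly as in \rthm{generic generator for dm}: at successor stage $\a+1$ one first performs a genericity iteration $\T_\a$ on $\M_\a$ above $\k_\a=\pi_{0,\a}(\k)$ to make $A_\a$ have a $<\pi^{\T_\a}(\pi_{0,\a}(\l))$-universally Baire code (here $\l$ is chosen to be a Woodin cardinal of $\P_0$ above $\k$), and then applies an extender $E_\a$ on the sequence of the last model of $\T_\a$ with $\cp(E_\a)=\k_\a$ and $\lh(E_\a)$ above the chosen Woodin, so that $A_\a\in \utilde{\Delta}^1_1(Code(\Sigma_{\M_{\a+1}|\lh(E_\a)}))$. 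Taking the direct limit $\P^+$ along this sequence and setting $\P=\P^+|\omega_1$, $\Sigma$ is the induced tail strategy. Clauses (1)--(3) of \rdef{generic generator def} are immediate from the construction (the club $C$ is a thinning of the set of $\lh(E_\a)$'s), clause (4) follows from Woodin's uniqueness of the extension of $\Sigma\rest HC^V$, and clauses (5)--(6) follow from the Generation Condition arranged at each stage. Finally, to extend $\Sigma$ to an $\omega_2$-strategy in $V[g]$ one invokes the $\Sigma^2_1$-reflection version of \cite[Theorem 2.1]{ADRUB} discussed in the footnote of \rthm{generic generator for dm}, giving that $\Sigma\rest HC^V$ is $<\Theta$-uB and hence extends canonically.

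The main obstacle, and the real content of the argument, is the identification of $\Delta$ as the reals of the derived model of $\P_0$ at $\k$: one must exclude, using $\neg{\sf{LEC}}$/$\neg{\sf{HPC}}$ and ${\sf{NLE}}$, both that the construction overshoots (producing a pair outside $\Delta$) and that it undershoots (failing to witness some $A\in\Delta$ as a derived-model set of reals). Once this calibration is in hand, the $\omega_1$-iteration in $V[g]$ and the verification of \rdef{generic generator def} are a direct transcription of the proof of \rthm{generic generator for dm}, with the hod-pair case differing from the pure-extender case only in that the backgrounded construction and the resulting strategy act on hod premice rather than on pure premice.
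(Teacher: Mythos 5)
Your proposal hinges on a calibration claim that is both unjustified and, as stated, internally confused: you assert that the least strong cardinal $\k$ of the construction output $\P_0$ is a limit of Woodin cardinals and that the derived model of $\P_0$ at $\k$ has reals exactly $\Delta$, and you try to derive this from $\neg{\sf LEC}$ (resp.\ $\neg{\sf HPC}$). But the argument you sketch — ``one could read off a pure extender pair $(\Q,\Lambda)$ with $Code(\Lambda)\notin\Delta$, contradicting $\neg{\sf LEC}$'' — cannot work, because $\Delta=\Delta_{gen,pe}$ is \emph{defined} to contain $Code(\Lambda)$ for every pure extender pair $(\Q,\Lambda)$; no such pair can have code outside $\Delta$, and $\neg{\sf LEC}$ asserts only that $\Delta\neq\powerset(\bR)$, not anything about which sets lie in it. More seriously, whether the $L[\vec{E}]$-construction of an $\N^*_x$-model reaches a Woodin cardinal that is a limit of Woodins is precisely what \rthm{towards wlw} addresses, and that theorem needs the additional hypotheses ${\sf DLIH}$ and boundedness of the bottom part; it is not available at this stage from $\neg{\sf LEC}+{\sf NLE}$ alone. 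You yourself flag this calibration as ``the real content of the argument,'' and it is exactly the step that fails.

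The paper's proof sidesteps this entirely. It does \emph{not} iterate $\P_0$ via genericity iterations relative to a single background; instead it iterates the background universe $M=\N^*_{x_0}$ itself. It fixes a $\Coll(\omega_1,\bR)$-generic enumeration $(y_\a:\a<\omega_1)$ of reals that is Turing-cofinal, so that the backgrounds $\N^*_{y_\a}$ \emph{grow}, and it produces $M_{\a+1}$ from $M_\a$ by a delicate mixed backgrounded construction inside $\N^*_{y_{\a+1}}$ over $M_\a|(\k_\a^+)^{M_\a}$ (this is the $({\rm IH}_\xi)$ construction and \rlem{never breaks down}), yielding a normal tree on $M_\a$ above $\k_\a$. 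The generic generator is then $\P=j_{0,\omega_1}(\N)|\omega_1$ where $\N$ is the $L[\vec{E}]$-construction of $M|\d$, and $\k$ may be taken to be \emph{any} strong cardinal of $\N$ — no Woodin-limit-of-Woodins structure is needed. Clause~6 of \rdef{generic generator def} is verified not by a derived-model argument but by universality of the backgrounded construction (\cite[Chapter 8.1]{SteelCom}): each mouse pair $(\Q,\Phi)$ has its code Suslin, co-Suslin captured by some $\N^*_{y_\a}$, and universality shows that $\Q$ iterates into $\P_\a|\k_\a$. Your proposed approach — a single background plus genericity iterations à la \rthm{generic generator for dm} — would only work once one knew the construction reaches a Woodin limit of Woodins, which is downstream of this theorem, not an input to it.
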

\begin{proof} The proof below produces a pe-generic generators. To produce hp-generic generators we consider hod pair constructions instead of pure $L[\vec{E}]$ constructions.

Because $\Delta\not =\powerset(\bR)$, we can fix $A^*\subseteq \bR$ such that $A^*\not \in \Delta$ and $w(A^*)=w(\Delta)$\footnote{For $B\subseteq \bR$, $w(B)$ is the Wadge rank of $B$ and for a pointclass $\Gamma$, $w(\Gamma)$ is the supremum of $w(B)$ for $B\in \Gamma$.}. Because ${\sf{AD_{\mathbb{R}}}}$ holds, we can fix a good pointclass $\Gamma$ such that all projective in $A^*$ sets of reals are contained in $\undertilde{\Delta}_\Gamma$. Let the tuple $(F, \Gamma^*, (N, \Psi), A)$ be as in \rthm{n*x} applied to $\Gamma$. 
Fix $x_0$ in the cone of \rthm{n*x} and set $M=\N^*_{x_0}$, $\Sigma'=\Sigma_{x_0}$ and $\d=\d_{x_0}$. Let $g\subseteq Coll(\omega_1, \bR)$ be $V$-generic. 
Working in $V[g]$, let $(y_\a:\a<\omega_1)$ be a sequence of reals such that
\begin{enumerate}
\item $y_0=x_0$,
\item for every $z\in \bR$ there is $\a<\omega_1$ such that $z\leq_T y_\a$, and
\item for every $\a<\omega_1$, $(\N^*_{y_\b}: \b<\a)\in HC^{\N^*_{y_\a}}$.
\end{enumerate}
Let $\N$ be the output of the pure $L[\vec{E}]$-construction of $M|\d$ as defined in \cite[Chapter 3.1]{SteelCom}. Let $\kappa$ be any strong cardinal of $\N$. It follows from \rcor{cor to strongs in the construction} that $\k$ is a strong cardinal of $M|\d$. 

We now build an $\omega_1$-sequence $(M_\a, \k_\a, j_{\a, \b}:\a\leq \omega_1)$ of iterates of $M$ such that the following conditions are satisfied.  
\begin{enumerate}
\item $M_0=M$ and $\k_0=\k$,
\item for $\a<\omega_1$,  $\k_\a=j_{0, \a}(\k)$, 
\item for $\a<\omega_1$, $\cp(j_{\a, \a+1})=\k_\a$, 
\item for all limit $\a\leq \omega_1$, $M_\a$ is the direct limit of $(M_\b, j_{\b, \gg}: \b<\gg<\a)$, and
\item for all $\b<\omega_1$, $(M_\a:\a<\b)\in HC^{\N^*_{y_\b}}$.
\end{enumerate}
The construction is very similar to the construction introduced in the proof of \cite[Theorem 9.3.2]{LSA} (see the construction  after Claim 3)\footnote{The author discovered this construction sometime during 2010-2012.}.

It is enough to explain how we build $M_{\a+1}$ given $M_\a$. We have that for each $\b<\omega_1$, $\N^*_{y_\b}|\d_{y_\b}=\M_1^{\Psi}(y_\b)|\d_{y_\b}$\footnote{$\M_1^{\Psi}$ is the minimal class size $\Psi$-mouse with a Woodin cardinal.}. Also, $\Sigma_{y_\b}$ acts on trees that are based on $\N^*_{y_\b}|\d_{y_\b}$ as there are no more extenders in $\N^*_{y_\b}$. The following lemma is the generalization of the corresponding lemma for $\M_1$. 

\begin{lemma} Suppose $\a<\b<\omega_1$. Then $\Sigma_{y_\a}\rest (\N^*_{y_\b}|\d_{y_\b})\in \N^*_{y_\b}$.
\end{lemma}
\begin{proof} Given a transitive set $b$ such that $L_1(b)\models ``b$ is well-ordered", we let $b^{\Psi, \#}$ be the minimal active sound $\Psi$-mouse over $b$. Let $G$ be the function given by $G(b)=b^{\Psi, \#}$. Let $G'$ be the function given by $G'(\U)=(\cop(\U))^{\Psi, \#}$ where $\U$ is any tree on $\N^*_{y_\a}|\d_{y_\a}$ of limit length. Notice that\\\\
(1) $G\rest (\N^*_{y_\b}|\d_{y_\b})$ is definable over  $\N^*_{y_\b}|\d_{y_\b}$, and therefore,\\
(2)  $G'\rest (\N^*_{y_\b}|\d_{y_\b})$ is definable over  $\N^*_{y_\b}|\d_{y_\b}$. \\\\
But $G'$ determines $\Sigma_{y_\a}$. Hence, $\Sigma_{y_\a}\rest (\N^*_{y_\b}|\d_{y_\b})$ is definable over $\N_{y_\b}$.
\end{proof}

Fix now $\a<\omega_1$. We want to build $M_{\a+1}$. Let $y=y_{\a+1}$. Clause 5 above implies that $M_\a\in HC^{\N^*_{y}}$. We now perform a backgrounded construction over $M_\a|(\kappa_\a^+)^{M_\a}$ that builds an iterate of $M_\a$ via a normal tree $\T$ that only uses extenders with critical point $\geq \kappa_\a$.  Set $W=Ult(\M_\a, E)$ where $E\in \vec{E}^{M_\a}$ is such that $\cp(E)=\k_\a$ and $Ult(M_\a, E)\models ``\k_\a$ is not a measurable cardinal". We let $\Lambda=(\Sigma')_W$ and $\d^W$ be the Woodin cardinal of $W$. 

The construction consists of models $(P_\xi, R_\xi: \xi<\d_{y})$. We start by setting $P_0=M_\a|(\k_\a^+)^{M_\a}=W|(\k_\a^+)^W$. The following is our inductive hypothesis:\\\\
IH$_\xi:$ In the comparison of $W$ with $P_\xi$, no disagreement is caused by a predicate indexed on the $P_\xi$ side.\\\\
A consequence of IH$_\xi$ is that in the aforementioned comparison, $P_\xi$ side doesn't move. Let then $\T_\xi$ be the $W$-to-$P_\xi$ tree according to $\Lambda$ with last model $W_\xi$ such that either $P_\xi\insegeq W_\xi$ or $W_\xi\insegeq P_\xi$. Our construction will ensure that $W_\xi\inseg P_\xi$ is impossible for $\xi<\d_{y_\b}$. Assume then that $P_\xi\insegeq W_\xi$. Another consequence of IH$_\xi$ is that for $\xi<\d_{y_\b}$, $P_\xi$ and $R_\xi$ are iterable as $\Psi$-mice and hence, we do not need to worry about the convergence of the construction. We stop the construction if either we build $(P_\xi, R_\xi: \xi<\d_{y})$ or encounter $\xi<\d_{y}$ for which IH$_\xi$ fails. \\\\
\textbf{The successor step.}\\

Suppose we have built $(P_\zeta, R_\zeta: \zeta<\xi)$ and $P_\xi$.  If $\xi=\d_y$ then we stop the construction and declare $P_\xi$ as its output. We define $R_\xi$ and $P_{\xi+1}$ as follows. \\\\
\textbf{1. Adding a Backgrounded Extender.} Suppose $G\in \vec{E}^{\N^*_{y}}$ is such that $(P_\xi, G\cap P_\xi, \in)$ is an ms-premouse\footnote{I.e. the indexing scheme and the initial segment condition are as in \cite{FSIT} and \cite{OIMT}. Our backgrounding condition is basically that of \cite[Chapter 11]{FSIT}.}. Letting $G$ be the least such extender, we set $R_\xi=(P_\xi, G\cap P_\xi, \in)$ and $P_{\xi+1}=\core(R_\xi)$. \\\\
\textbf{2. Adding an Extender from $W_\xi$.} Assume  clause 1 fails. Suppose that there is no extender as above but there is an extender $G\in \vec{E}^{W_\xi}$ such that $\cp(E)<\k_\a$ and $(P_\xi, E, \in)$ is a $\Psi$-premouse. We then let $R_\xi=(\P_\xi, E, \in)$ and $P_{\xi+1}=\core(R_\xi)$.\\\\
\textbf{3. Adding a Branch from $W_\xi$.} Assume clauses 1 and 2 fail, $P_\xi=W_\xi|\nu$, and in $W_\xi$,  there is a branch $b$ indexed at $\nu$. We then let $\R_{\xi+1}=W||\nu$ and $P_{\xi+1}=\core(R_\xi)$. \\\\
\textbf{4. Closing Under Constructibility.} Assume clauses 1-3 fail. Set $R_\xi=rud(P_\xi)$ and $P_{\xi+1}=\core(R_\xi)$.\\\\
\textbf{The limit step.}\\

We define the limit step according to \cite[Definition 6.3]{OIMT}. Suppose we have build $(P_\zeta, R_\zeta: \zeta<\xi)$ and $\xi$ is a limit ordinal. We then let $\P_\xi$ be the $limsup_{\zeta\rightarrow \xi}P_\zeta$. More precisely, $P_\xi$ is defined on ordinals $\b$ such that for some $\zeta<\xi$, $\omega\b\leq {\sf{Ord}}\cap \P_\zeta$. Suppose now that for every $\a<\b$, we have defined $P_\xi||\a$.
Then because for every $\zeta<\xi$, $P_\zeta$ and $R_\zeta$ are iterable, it follows that for some $\zeta^*<\xi$, for all $\zeta\in [\zeta^*, \xi)$, $P_\zeta||\b=P_{\zeta^*}||\b$. We then set $P_\xi||\b=P_\zeta||\b$.

Notice now that if $\l$ is a cardinal of $\P_\xi$ then $(\l^+)^{\P_\xi}$ exists if and only if for some $\zeta^*<\xi$, for all $\zeta\in [\zeta^*, \xi)$, $\P_\xi||(\l^+)^{\P_{\xi}}=\P_\zeta||(\l^+)^{\P_\xi}$. Equivalently, $\l$ is the largest cardinal of $\P_\xi$ if and only if for cofinally many $\zeta<\xi$, $\rho(\P_\zeta)=\l$. 

\begin{lemma}\label{never breaks down} The above construction lasts $\d_y$ many steps.  
\end{lemma}
\begin{proof} Towards a contradiction assume that the construction breaks down at some stage $\xi$. This means that our inductive hypothesis IH$_\xi$ fails. A careful analysis of the construction shows that $\xi$ must be a limit ordinal. Below we provide the details of this analysis. \\

\textit{Claim.} $\xi$ is a limit ordinal.
\begin{proof} Towards a contradiction assume that $\xi=\gg+1$. Recall the tree $\T_\gg$ on $W$ that is according to $\Lambda$. $\T_\gg$ has a last model $W_\gg$ and $P_\gg\insegeq W_\gg$. We now examine how $R_\gg$ is defined. 

Suppose that $R_\gg$ is obtained from $P_{\gg}$ by adding a predicate from $W_\gg$. We examine the case when the added predicate is an extender, the other case being very similar. This means that if $\nu={\sf{Ord}}\cap P_\gg$ then $E_\nu^{W_\gg}\not=\emptyset$ and $R_\gg=(P_\gg, E_\nu, \in)$. If $\rho(R_\gg)=\nu$ or $R_\gg$ is sound then $P_{\gg+1}=P_\xi=R_\gg$. It follows that $P_{\gg+1}\insegeq W_\gg$. Therefore, IH$_\xi$ holds as witnessed by $\T_\gg$. 

Suppose now that $\rho(R_\gg)<\nu$ and $R_\gg$ is not sound. Because we have that $R_\gg\insegeq W_\gg$ and all proper initial segments of $W_\gg$ are sound, we have that $W_\gg=R_\gg$. It then follows that $P_{\xi}=\core(R_\gg)$ is a node on $\T_\gg$. Then $\T_{\leq P_\xi}$ witnesses that IH$_\xi$. 

Suppose now that $R_\gg$ is obtained from $P_\gg$ by adding a backgrounded extender. Let $G\in \vec{E}^{\N^*_y}$ be such that $R_{\gg}=(P_\gg, G\cap P_\gg, \in)$. It follows from the stationarity of the backgrounded constructions that $G\cap W_\gg\in \vec{E}^{W_\gg}$ (e.g. see \cite[Lemma 2.11]{HMMSC} or \cite[Lemma 8.1.2]{SteelCom}). The rest follows exactly the same argument as above.  

It remains to examine the case when $R_\gg=rud(P_\gg)$. If $P_\gg\inseg W_\gg$ then the proof is exactly as above. Assume then $P_\gg=W_\gg$. Because $W_\gg$ is sound, we must have that the main branch of $\T_\gg$ doesn't drop. Let $\l=\pi^{\T_\gg}(\d^W)$. We have that $\l<\d_y$. $S$-constructions now show that $(\N^*_{y}|\l)^{ \Psi, \#}\models ``\l$ is a Woodin cardinal"\footnote{Because every extender on $W_\gg$ with critical point $>\k_\a$ is backgrounded, we must have that $\N^*_{y}|\l$ is generic over $W_\gg$. E.g. see \cite[Lemma 3.40]{HMMSC} or \cite[Lemma 1.3]{NegRes}. $(\N^*_{y}|\l)^{ \Psi, \#}$ is the minimal active $\Psi$-mouse over $\N^*_y|\l$. Thus, for some $\a$, the universe of $(\N^*_{y}|\l)^{ \Psi, \#}$ is $\mathcal{J}^\Psi_\a[\N^*_y|\l]$. To show that $\l$ is Woodin in $(\N^*_{y}|\l)^{ \Psi, \#}$ we use the fact mentioned in the first sentence of this footnote. It shows that every $f:\l\rightarrow \l$ which is in $(\N^*_{y}|\l)^{ \Psi, \#}$ is dominated by some $g:\l\rightarrow \l$ such that $g\in W_\gg$. We can then pick an $F\in \vec{E}^{\N^*_y}$ such that $G=_{def}F\cap W_\gg$ witnesses Woodinness of $\l$ for $g$ in $W_\gg$. It is then easy to see that $F$ witnesses Woodinness of $\l$ for $f$ in $(\N^*_{y}|\l)^{ \Psi, \#}$. For more details see the Claim in the proof of \cite[lemma 1.3]{NegRes}. This type of arguments were first used by Steel and Woodin.}. This contradicts the minimality of $\d_y$.
\end{proof}

We thus have that $\xi$ is a limit ordinal. We now define the $W$-to-$P_\xi$ tree. For each $P_\xi$ cardinal $\l$ such that $(\l^+)^{P_\xi}<{\sf{Ord}}\cap P_\xi$, let $\U_\l$ be the $W$-to-$P_\xi|(\l^+)^{P_\xi}$ tree according to $\Lambda$. We have that $P_\xi|(\l^+)^{P_\xi}$ is a model in our construction; thus, in fact $\U_\l=\T_\zeta$ for some $\zeta$. Let $S_\l$ be the last model of $\U_\l$. Because $\U_\l$ is just a comparison tree, it follows that for $\l<\l^*$, $\U_\l\inseg \U_\l^*$. Thus, if $P_\xi$ doesn't have a largest cardinal then setting $\T_\xi=_{def}\cup_{\l}\U_\l$, $\T_\xi$ is as desired. 

Suppose then $\l$ is the largest cardinal of $P_\xi$. We are still in the process of defiing $W$-to-$P_\xi$ tree.  Let $\zeta<\xi$ be such that $\l={\sf{Ord}}\cap \P_\zeta$ and $\P_\zeta=\P_\xi|\l$. Because $\l$ is a cardinal of $P_\xi$, we have that for every $\zeta^*\in [\zeta, \xi)$, $\rho(P_{\zeta^*})\geq \l$. Because $\l$ is the largest cardinal of $P_\xi$ and $\xi$ is a limit ordinal, we have that for cofinally many $\theta<\xi$, $P_\theta$ is sound and $\rho(P_\theta)=\l$. Fix one such $\theta>\zeta$. We claim that $P_\theta\insegeq W_\zeta$. Towards a contradiction assume not. By minimizing such a $\theta$, we can assume that $\theta=\gg+1$ and $P_\theta=\core(R_\gg)$. Because $R_\gg$ is not sound, we must have that $W_\gg=R_\gg$. Because $R_\gg|\l=W_\zeta|\l$, we must have that $\T_\zeta\insegeq \T_\gg$. Because $R_\gg$ is not sound, we have that for some $\iota< \lh(\T_\gg)$, $P_\theta\insegeq \M_\iota^{\T_\gg}$. Because $\rho(P_\theta)=\l$ and $P_\theta$ is sound, we must have that $\P_\theta\insegeq W_\zeta$. Thus, $P_\xi\insegeq W_\zeta$, and therefore $\T_\zeta$ witnesses that IH$_\xi$ holds. 
\end{proof}

We thus have that our construction lasts $\d_y$ many steps. Let then $\T$ be according to $\Lambda$ and such that $\cop(\T)=P_{\d_y}$. The following now follows from our backgrounding condition and from the results of \cite[Chapter 12]{FSIT} (and in particular, from \cite[Lemma 11.4]{FSIT}).\\\\
(A) Suppose $F^*\in \vec{E}^{\N^*_y}$ is such that for some $\N^*_y$-inaccessible cardinal $\l>\k_\a$, $\cp(F^*)>\k_\a$, $\nu(F^*)=\lambda$ and $P_{\d_y}|\lambda=\pi_{F^*}^{\N^*_y}(P_{\d_y})|\lambda$. Let $\tau=\cp(F^*)$ and $F=_{def}F^*\cap P_{\d_y}$. Let $\rho$ be such that $\rho$ is the natural length of $F\rest \rho$ and $\rho$ is not a generator of $F$. Then the natural completion of $F\rest \rho$ is on the sequence of $P_{\d_y}$. \\\\
Let $b=\Lambda(\T)$. Because of clause 1 of the construction, we must have that $\M^\T_b\models ``\d_y$ is a Woodin cardinal" (notice that $b\not \in \N^*_y$ but $\M^\T_b$ is a class of $\N^*_y$). We then set $M_{\a+1}=\M^\T_b$. This finishes our construction of $(M_\a, \k_\a, j_{\a, \b}:\a\leq \omega_1)$.

Set now $\P=j_{0, \omega_1}(\N)|\omega_1$ and let $\Sigma$ be the strategy of $\P$ induced by $(\Sigma')_{M_{\omega_1}}$. Notice that because in $V[g]$, $(\Sigma')_{M_{\omega_1}}$ is an $\omega_2$-strategy, $\Sigma$ is an $\omega_2$-strategy. We have that $(\P, \Sigma)$ is a mouse pair. It is not hard to see that $(\P, \Sigma)$ satisfies clauses 1-4\footnote{We get a club of inaccessibles in $\P$ as each $\k_\a$ is such a point.} of \rdef{generic generator}. Clause 5 is a consequence of the fact that $\Delta=\Delta_{gen}$. We verify clause 6.

Towards showing clause 6, fix a mouse pair $(\Q, \Phi)$. We have that $Code(\Phi)\in \Delta$. We can then find $y_\a$ such that $\a=\b+1$ and $Code(\Phi)$ is Suslin, co-Suslin captured by $\N^*_{y_\a}$. Let $\P_\a=j_{0, \a}(\N)$. We claim that $Code(\Phi)$ is projective in $Code(\Sigma_{\P_\a|\kappa_\a})$. 

To see this, let $(\S_\xi, \S_\xi': \xi<\d_{y_\a})$ be the output of the fully background $L[\vec{E}]$-construction\footnote{Because we want to use Steel's Comparison Theorem from \cite{SteelCom}, this construction uses the background condition of \cite{SteelCom}.} of $\P_\a$ that uses extenders with critical points $\geq \omega_1^{\N^*_{y_\a}}$ (because we left $\kappa$ behind, $\P_\a$ has many total extenders whose critical point is $<\omega_1^{\N^*_{y_\a}}$). It follows from the results of \cite[Chapter 8.1]{SteelCom} that for some $\nu<\d_{y_\a}$, $\S_\nu$ is a $\Phi$-iterate of $\Q$ such that the iteration embedding $j:\Q\rightarrow \S_\nu$ exists and $\Phi_{\S_\nu}$ is the strategy of $\S_\nu$ induced by $\Sigma_{y_\a}$\footnote{This follows from (A). Indeed, the proof is straightforward assuming that the $L[\vec{E}]$-construction was performed in $\N^*_y$ itself. However, the construction is performed inside $\P_{\a}$. Nevertheless, the same proof works as (A) above guarantees that $\P_\a$ has many backgrounded extenders. The reader may wish to consult \cite[Chapter 4.5]{LSA}.}. Because $\kappa_\a$ is a $<\d_{y_\a}$-strong cardinal in $\N^*_{y_\a}$, we get that $\nu<\kappa_\a$\footnote{To see this, fix $\N^*_{y_\a}$-inaccessible cardinals $\zeta<\zeta'<\d_{y_\a}$ such that $\P_\a|\zeta$ is constructed inside $\M_{\a+1}|\zeta+1$, $\M_{\a+1}|\zeta+1$ is constructed in $\N^*_{y_\a}|\zeta'$ and $\S_\nu$ is constructed inside $\P_\a|\zeta$. Let $F\in \vec{E}^{\P}$ be such that $\cp(F)=\k_\a$, $\nu(F)>\zeta'$ and that $\nu(F)$ is a measurable cardinal of $\P_\a$. We can find such an $F$ because our original $\k$ was a strong cardinal in $\N$. Let $F'\in \vec{E}^{M_{\a+1}}$ be the background extender of $F$ and $F''\in N^*_{y_\a}$ be the background extender of $F'$. We now have that $\pi_{F''}(\P_\a)|\nu(F)=\P_\a|\nu(F)$. It then follows that in $Ult(\N^*_{y_\a}, F'')$, $\S_\nu$ is built inside $\pi_{F''}(\P_\a)|\pi_{F''}(\k_\a)$. Hence, (because $\Q\in \N^*_{y_\a}|\k_\a$) $\S_\nu$ is built inside $\P_\a|\k_\a$.},\footnote{Here, we use universality, and we need ${\sf{NLE}}$ to implement it.}. Because we have that the fragment of $(\Sigma')_{M_\a}$ that acts on trees above $\kappa_\b$ is induced by $\Sigma_{y_\a}$, it follows that $\Phi_{\S_\nu}$ is also the strategy induced by $(\Sigma')_{\M_\a}$, and hence, by $\Sigma_{\P|\kappa_\a}$ (recall that $\P_\a|\kappa_\a=\P|\kappa_\a$). It follows that $Code(\Phi)$ is projective in $Code(\Sigma_{\P|\kappa_\a})$.
\end{proof}

\begin{definition}\label{g.g. production} Suppose that $(F, x_0, \Gamma^*, (N, \Psi), A)$ is as in the proof of \rthm{existence of gg}. Then we say that the iteration $(M_\a, j_{\a, \a+1}: \a\leq \omega_1)$ is a \textbf{fully backgrounded pe-gg-producing} iteration of $(\N^*_{x_0}, \d_{x_0}, \Sigma_{x_0})$ if the iteration is defined as in the proof of \rlem{existence of gg} relative to some enumeration $(y_\a: \a<\omega_1)$. We say that the iteration is \textbf{relative to} $\k$ if $\k$ is the strong cardinal chosen from $\N$ as in the proof of \rthm{existence of gg}.

Similarly we define the \textit{fully backgrounded hp-gg-producing} iteration of $(\N^*_{x_0}, \d_{x_0}, \Sigma_{x_0})$. If the type of generic generator is irrelevant then we simply use the expression ``gg-producing iteration".
\end{definition}
The key useful property of the generic generator that we have produced is that its extenders have background certificates. 

\begin{definition}\label{backgrounded g.g.} We say that a generic generator $(\P, \Sigma)$ is backgrounded if there is $$(F, x_0, \Gamma^*, (N, \Psi), A)$$ as in the proof of \rthm{existence of gg} such that $(\P, \Sigma)$ is the generic generator produced via some g.g.-producing iteration of $(N^*_{x_0}, \d_{x_0}, \Sigma_{x_0})$. 
\end{definition}

The following is a special case of \rthm{existence of gg}. 
\begin{notation}\label{strong cardinals} Suppose $M$ is a transitive model of some fragment of set theory. We then set:
\begin{enumerate}
\item $S(M)=\{ \k: M\models ``\k$ is a strong cardinal$"\}$.
\item $W(M)=\{ \d: M\models ``\d$ is a Woodin cardinal$"\}$. 
\item For any set $B\subseteq {\sf{Ord}}$, $cB=\{\a: \sup(\a\cap B)=\a\}$.
\end{enumerate}
\end{notation} 

\begin{definition} We let ${\sf{NWLW}}$ stand for the statement: ``There is no $\omega_1+1$-iterable active mouse $\M$ such that $\M\models ``$there is a Woodin cardinal that is a limit of Woodin cardinals"".
\end{definition}

\begin{corollary}\label{existence of gg below wlw} Suppose $V\models {\sf{AD_\bR}}+V=L(\powerset(\bR))+\neg{\sf{LEC}}+{\sf{NWLW}}$.  Then setting $\Delta=\Delta^V_{gen, pe}$, whenever $g\subseteq Coll(\omega_1, \bR)$ is $V$-generic, there is a pe-generic generator $(\P, \Sigma)\in V[g]$ for $\Delta$ such that $\sup(W(\P)\cup S(\P))<\omega_1$ and all strong cardinals of $\P$ are limits of Woodin cardinals. 
\end{corollary}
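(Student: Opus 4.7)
The plan is to apply \rthm{existence of gg} with a judicious choice of the strong cardinal $\kappa$ of $\N$, and then read off the required structural properties of $\P$ from those of $\N$ via elementarity.

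Recall from the proof of \rthm{existence of gg} that the pe-generic generator takes the form $\P = j_{0,\omega_1}(\N)|\omega_1$, where $\N$ is the output of the fully backgrounded pure $L[\vec{E}]$-construction of $M|\d$ inside $M = \N^*_{x_0}$, and the iteration $(M_\alpha, j_{\alpha,\beta}: \alpha \leq \omega_1)$ has critical points $\kappa_\alpha = j_{0,\alpha}(\kappa)$ for $\kappa$ a strong cardinal of $\N$ chosen in advance. Since $\N^*_{x_0} \subseteq L_1[x_0]$, the model $M$ is countable in $V$, so $\kappa$ is a $V$-countable ordinal; the sequence $(\kappa_\alpha: \alpha < \omega_1)$ is then a strictly increasing sequence of $V$-countable ordinals of length $\omega_1^V$, whence $j_{0,\omega_1}(\kappa) = \omega_1^V$. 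Because $\kappa$ is the critical point of $j_{0,\omega_1}$, and every ordinal of $\N$ that is $\geq \kappa$ maps under $j_{0,\omega_1}$ to something $\geq \omega_1^V$, elementarity yields
\[
W(\P) = W(\N) \cap \kappa \quad\text{and}\quad S(\P) = S(\N) \cap \kappa.
\]

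Next I would use ${\sf{NWLW}}$ to restrict the structure of $\N$. Because $\N$ arises from a fully backgrounded construction in the self-capturing background $M$, every initial segment of $\N$ is $\omega_1{+}1$-iterable (by \rthm{n*x} together with the universality argument in \cite[Chapter 8.1]{SteelCom}, noting that ${\sf{NWLW}}$ implies ${\sf{NLE}}$). Hence no initial segment of $\N$ satisfies ``there is a Woodin cardinal that is a limit of Woodin cardinals''; in particular, within $\N$ no Woodin cardinal is a limit of Woodin cardinals of $\N$, so the Woodins of $\N$ are discrete in this sense.

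Finally I choose $\kappa$ to be the least strong cardinal of $\N$. Such a $\kappa$ exists because $M|\d$ has a Woodin cardinal $\d$, hence many $<\d$-strong cardinals in $M$, and by \rcor{cor to strongs in the construction} these descend to strong cardinals of the $L[\vec{E}]$-construction $\N$. With this choice, $S(\N) \cap \kappa = \emptyset$, so $S(\P) = \emptyset$ and the clause ``all strong cardinals of $\P$ are limits of Woodin cardinals'' holds vacuously. Moreover $W(\P) \cup S(\P) \subseteq \kappa$, and since $\kappa < \omega_1^V$ we obtain $\sup(W(\P) \cup S(\P)) \leq \kappa < \omega_1$, giving the first conclusion. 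The only step requiring verification is that the gg-producing iteration of \rthm{existence of gg} can be carried out with this specific choice of $\kappa$; inspection of that proof shows that it only uses strongness of $\kappa$ in $\N$ (to ensure access to many total extenders along the iteration and to obtain the Generation Condition), so no obstruction arises. The main conceptual point to check is thus merely that the minimality of $\kappa$ does not interfere with witnessing clause 6 of \rdef{generic generator def}, which follows because the argument there uses strongness of $\kappa$ in $\N$ below the various $\d_{y_\a}$ and not any additional structural feature.
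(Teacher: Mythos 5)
Your proof works, but it takes a genuinely different route from the paper's, and it is worth being explicit about the divergence. The paper's proof picks $\kappa$ to be the least strong cardinal of $\N$ \emph{above} $\sup(W(\N))$ (noting that $\sup(W(\N))<\d$ because $\d$ is Woodin in $L(\N)$). With that choice, $W(\P)=W(\N)$ in full, $S(\P)=S(\N)\cap\kappa$ is possibly nonempty, and every $\mu\in S(\P)$ is a limit of Woodins because $\mu\leq\sup(W(\N))$: an extender with critical point $\mu$ that captures $V_{\delta+1}$ for some Woodin $\delta>\mu$ reflects Woodin cardinals below $\mu$ (and the degenerate case $\mu=\sup(W(\N))$ Woodin is ruled out by ${\sf{NWLW}}$). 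You instead pick $\kappa$ to be the \emph{least} strong cardinal of $\N$, which makes $S(\P)=S(\N)\cap\kappa=\emptyset$ and the ``strong implies limit of Woodins'' clause vacuous. That is logically adequate for the statement as written, but it may also make $W(\P)=W(\N)\cap\kappa=\emptyset$ (the least strong cardinal of a backgrounded $L[\vec{E}]$-construction typically lies below its least Woodin), so the generic generator you produce could have $\P^b$ trivial; the paper's choice produces a $\P$ that carries all of $W(\N)$ and hence a nondegenerate bottom part, which is what later arguments such as \rthm{towards wlw} actually lean on.

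Two smaller points. First, your justification that strong cardinals of $\N$ exist cites \rcor{cor to strongs in the construction} in the wrong direction: that corollary transfers strongness \emph{from} $\N$ \emph{to} the background $M|\d$, whereas the existence of strong cardinals in $\N$ is a universality fact about the $L[\vec{E}]$-construction (used silently in \rthm{existence of gg}) and is not a consequence of that corollary. Second, your middle paragraph invoking ${\sf{NWLW}}$ to argue that the Woodins of $\N$ are ``discrete'' does no work in your argument as given; it is only needed in the paper's approach, and even there only to exclude the corner case in which $\sup(W(\N))$ is itself a strong Woodin cardinal of $\N$.
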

\begin{proof} In the proof of \rthm{existence of gg}, $\k$ was chosen to be any strong cardinal of $\N$. As $\d$ is a Woodin cardinal in $L(\N)$, we have that $\sup(W(\N))<\d$. We can now perform the g.g. producing iteration of \rthm{existence of gg} by choosing $\kappa$ to be the least  strong cardinal of $\N$ such that $\sup(W(\N))<\k$. 
\end{proof}

\begin{remark}\label{gap 0}
\normalfont \rthm{existence of gg} can be used to produce a hp-generic generator with essentially the same proof using hod pair constructions instead of $L[\vec{E}]$ constructions. However, while we strongly believe that Corollary \ref{existence of gg below wlw} is true for hp-generic generators, there is currently a gap in the literature which doesn't allow us to prove it for hod pair generic generators. The issue is that if $\N^*_{x_0}$ is as in the proof of Theorem \ref{existence of gg} then it is not known that $\d_{x_0}$ is a Woodin cardinal in the hod pair construction of  $\N^*_{x_0}$ as developed in \cite[Definition 3.1.1]{SteelCom}. The $L[\vec{E}]$ construction introduced in \cite[Definition 3.2]{FarmBC} does have this property as shown in \cite[Theorem 3.6]{FarmBC}. Instead of \cite[Definition 3.1.1]{SteelCom} one could use \cite[Definition 3.2]{FarmBC} to define hod pair constructions but here the missing piece is that \cite{SteelCom} verifies that backgrounded constructions inherit nice strategies for the hod pair construction introduced in \cite[Definition 3.1.1]{SteelCom}. 

It may seem that this is also an issue for pe-generators, but the issue can be resolved easily by quoting published results. Recall that in Corollary \ref{existence of gg below wlw} we assume ${\sf{NWLW}}$. First, it is shown in \cite{SteelCom} and in \cite[Chapter 3, Chapter 4, Claim 4.1]{ESC} that the mouse construction of $\N^*_{x_0}$ is universal in the sense that if $\N$ is the output of this construction then $S(\N)$, the stack over $\N$, has covering in the sense that $\N^*_{x_0}\models \cf(S(\N)\cap {\sf{Ord}})\geq \d_{x_0}$. The results of \cite{FuchsTM} imply that $S(\N)$ can be translated into an ms-mouse, call it $\M$. Let now $\N'$ be the $L[\vec{E}]$-construction of $\N^*_{x_0}$ done as in \cite{FSIT}. Thus, $\N'$ is also an ms-mouse and moreover, $S(\N')\models ``\d_{x_0}$ is a Woodin cardinal" and $\N^*_{x_0}\models \cf(S(\N')\cap {\sf{Ord}})\geq \d_{x_0}$. Let $\M'=S(\N')$. We can now compare $\M$ and $\M'$. Because both are universal, we get that they embed into the same mouse $\Q$ such that if $i: \M\rightarrow \Q$ and $i':\M'\rightarrow \Q$ are the iteration embeddings\footnote{To iterate $\M$ and $\M'$, we use their strategies induced by $\Sigma_{x_0}$.} then $i(\d_{x_0})=i'(\d_{x_0})=\d_{x_0}$. It now follows that $\M\models ``\d_{x_0}$ is a Woodin cardinal" and hence, $L[\N]\models ``\d_{x_0}$ is a Woodin cardinal". 
\end{remark}

\section{Amenable linear iterations}

The main result of this section is \rcor{cor to equality}. It will be used in the proof of \rthm{towards wlw}. The reader may choose to skip this section. Given a transitive set $M$ and an $M$-extender $E$, we say $E$ is \textit{amenable} to $M$ if for any $X\in M$ and $a\in (\lh(E))^{<\omega}$, $E_a\cap X\in M$. Recall that in an iteration tree, each extender is \textit{close} to the model it is applied to (see \cite[Lemma 6.1.5]{FSIT}). It follows that if $M\models {\sf{ZFC-Powerset}}$ and $\T$ is an iteration tree on $M$ and $\pi^\T$ exists then if $N$ is on the main branch of $\T$ and $E$ is applied to $N$ then $E$ is amenable to $N$. 

\begin{definition} Given an lbr pmouse or just a premouse $\R$, we say $p=(\R_\a, E_\a, \pi_{\a, \b}: \a<\b<\iota)$ is an \textit{amenable linear iteration} of $\R$ if 
\begin{enumerate}
\item $\R_0=\R$, and for all $\a<\iota$, $\R_\a$ is transitive,
\item $E_\a$ is a total $\R_\a$-extender that is amenable to $\R_\a$,
\item for $\a<\iota$, $\R_{\a+1}=Ult(\R_\a, E_\a)$ and $\pi_{\a, \a+1}=\pi^{\R_\a}_{E_\a}$,
\item for $\a<\b<\iota$, $\pi_{\a, \b}:\R_\a\rightarrow \R_\b$ is the composition of $\pi^{\R_\xi}_{\xi, \xi+1}$ for $\xi\in [\a, \b)$,
\item for limit $\l<\iota$, $\R_\l$ is the direct limit of $(\R_\a, \pi_{\a, \b}: \a<\b<\l)$ and $\pi_{\a, \l}:\R_\a\rightarrow \R_\l$ is the direct limit embedding.
\end{enumerate}
If $p$ has a last model then we let $\pi_p=\pi_{0, \iota-1}$.  
\end{definition}

Our main source of amenable linear iterations are the main branches of iteration trees. Suppose $\T$ is an iteration tree on $\R$ satisfying ${\sf{ZFC-Powerset}}$. Assume $\pi^\T$ exists and let $\eta$ be an inaccessible cardinal of $\R$. Suppose all extenders used on the main branch of $\T$ have critical points below the image of $\eta$. More precisely, if $\a<lh(\T)$ is on the main branch of $\T$ and $F$ is the extender used at $\a$ on the main branch then $\cp(F)<\pi^\T_{0, \a}(\eta)$. Let now $(\R_\a, E_\a: \a<\iota)$ be the models and extenders used in $\T$ on the main branch of $\T$ (we re-enumerate these models). Set $\M_0=\R|\eta$ and $\M_\a=\pi_{0, \a}^\T(\M_0)$. Then we naturally have an amenable linear iteration $(\M_\a, E_\a, \pi_{\a, \b}:\a<\b<\iota)$ induced by $\T$. 

Suppose now that $p$ is an amenable linear iteration of $\R\models {\sf{ZFC}}$ where $\R$ is a (lbr) premouse. Suppose further that $\M$ is a (lbr) premouse extending $\R$ such that $\eta=_{def}{\sf{Ord}}\cap \R$ is the largest cardinal of $\M$, $\M\models ``\eta$ is an inaccessible cardinal" and $\M\models {\sf{ZFC-Powerset}}$. We can now apply $p$ to $\M$ and obtain an amenable linear iteration $p^\M$ of $\M$. $p^\M$ is the result of successively applying the extenders used in $p$ to $\M$.

Our goal is to prove \rcor{cor to equality}. We start by proving what is really the main step towards \rcor{cor to equality}.

\begin{lemma}\label{equality from ultrapowers} Suppose $\M$ and $\N$ are two lbr premice or just premice having the same largest cardinal $\eta$ such that
\begin{enumerate}
\item $\M, \N\models {\sf{ZFC-Powerset}}$,
\item $\M, \N\models ``\eta$ is an inaccessible cardinal" and
\item $\M|\eta=\N|\eta$.
\end{enumerate}
Suppose $E$ is an extender such that
\begin{enumerate}
\item $E$ is amenable to $\M$ and $\N$,
\item $\cp(E)<\eta$ and
\item $Ult(\M, E)=Ult(\N, E)$ and $Ult(\M, E)$ is well-founded\footnote{The referee has pointed out that the well-foundedness of these ultrapowers is not necessary.}. 
\end{enumerate} Then $\M= \N$.
\end{lemma}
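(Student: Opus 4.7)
My plan is to recover both $\M$ and $\N$ from the common ultrapower $\P := \mathrm{Ult}(\M,E) = \mathrm{Ult}(\N,E)$ together with their shared initial segment $\M|\eta = \N|\eta$, proceeding by induction on the ordinal height above $\eta$. The first step is to establish that the two ultrapower maps already agree on everything they naturally share. For any $x \in V^\M_\eta = V^\N_\eta$, the constant function $c_x : \cp(E) \to \{x\}$ is a set of Kuratowski pairs whose ranks are bounded by $\max(\cp(E), \mathrm{rank}(x)) + 2 < \eta$, so $c_x \in V^\M_\eta = V^\N_\eta \subseteq \M \cap \N$. Hence $\pi^\M_E(x) = [\{\cp(E)\}, c_x]_E = \pi^\N_E(x)$ in $\P$, and by continuity at the inaccessible $\eta$, $\pi^\M_E(\eta) = \pi^\N_E(\eta) =: \eta^*$, with $\P|\eta^* = \pi^\M_E(\M|\eta) = \pi^\N_E(\N|\eta)$. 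Amenability of $E$ to both models, together with $V^\M_\eta = V^\N_\eta$, further forces $E_a \cap X$ to be the same set whether computed in $\M$ or in $\N$, for every $X \in V^\M_\eta$ and $a \in [\lh(E)]^{<\omega}$, so the ``small'' part of $E$ is shared.

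Next I induct on $\alpha \in [\eta, \min({\sf Ord}\cap \M, {\sf Ord}\cap \N)]$ to show $\M|\alpha = \N|\alpha$; the base $\alpha = \eta$ is the hypothesis and limit steps are immediate. For the successor step, assume $\M|\alpha = \N|\alpha$. A parallel sub-induction on rank---coding each $y \in \M|\alpha$ of rank $\geq \eta$ as a subset of some ordinal $\beta < \alpha$ inside the common initial segment, which is possible since $\eta$ is the largest cardinal and $\M,\N \models {\sf ZFC}$-Powerset---yields $\pi^\M_E \restriction \M|\alpha = \pi^\N_E \restriction \N|\alpha$, and in particular $\pi^\M_E(\alpha) = \pi^\N_E(\alpha) =: \alpha^*$. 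Any (lbr)-premouse predicate $F^\M$ of $\M$ indexed at $\alpha$ is then mapped under $\pi^\M_E$ to the predicate of $\P$ at index $\alpha^*$; because $\P = \mathrm{Ult}(\N,E)$ and $\pi^\N_E$ is elementary, this same predicate of $\P$ is $\pi^\N_E(F^\N)$ for the corresponding predicate $F^\N$ of $\N$ at $\alpha$ (and the parallel argument rules out one model having a predicate while the other does not). Coherence and amenability of the extender sequences confine each measure $(F^\M)_a$ to a subset of $\M|\alpha = \N|\alpha$ for $a \in [\cp(E)]^{<\omega}$, and injectivity of $\pi^\M_E \restriction \M|\alpha = \pi^\N_E \restriction \N|\alpha$ then forces $F^\M = F^\N$ fiber-by-fiber. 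Once the induction completes, equality of heights ${\sf Ord}\cap \M = {\sf Ord}\cap \N$ is forced: a strict inequality would make one ultrapower a proper initial segment of the other, contradicting $\mathrm{Ult}(\M,E) = \mathrm{Ult}(\N,E)$.

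The principal technical obstacle is the successor step---in particular, producing for each $y \in \M|\alpha$ with $\mathrm{rank}(y) \geq \eta$ a representing function for $\pi^\M_E(y)$ that lies below level $\alpha$ in the common initial segment, because the constant function $c_y$ itself sits above $\eta$ and need not lie in $\M|\alpha$. This is where the (lbr)-premouse fine structure, the largest-cardinal hypothesis, and ZFC-Powerset in both models are essential: together they let one code each set of rank less than $\alpha$ by a subset of a smaller ordinal inside $\M|\alpha = \N|\alpha$, yielding representing functions in the common part on which \L o\'s's theorem applied to the common fragment of $E$ returns the same value whether computed in $\M$ or $\N$. The inversion of the equality $\pi^\M_E(F^\M) = \pi^\N_E(F^\N)$ inside $\P$ is then carried out one measure at a time, using amenability to confine each $(F^\M)_a$ to $\M|\alpha$ and invoking the inductive agreement of the ultrapower maps on that common initial segment. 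Once these agreements are established, the rest of the argument reduces to a routine transitive-collapse extraction of each source model from its image in $\P$.
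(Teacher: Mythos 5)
Your opening paragraph is correct and is exactly the observation the paper's proof implicitly relies on: since $\eta$ is inaccessible in both models and $\M|\eta = \N|\eta$, every ordinal $\gamma < \pi^\M_E(\eta) = \pi^\N_E(\eta)$ is represented by a function $h : \cp(E)^{|b|} \to \eta$ with bounded range, hence $h \in V^\M_\eta = V^\N_\eta$, so the two ultrapower maps agree on $V_\eta$ and in particular on $[\eta]^{<\omega}$. The paper uses this tacitly when it asserts $\pi^\N_E(\N_0) \models \phi[\pi^\N_E(p), \pi^\N_E(t)] \Leftrightarrow A_{\phi,t} \in E_a$ via \L o\'s over $\M$.

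The rest of your argument, however, has a genuine gap. Your level-by-level induction needs, at the successor step, that $\pi^\M_E\restriction\M|\alpha = \pi^\N_E\restriction\N|\alpha$, and your proposed justification is to code each $y\in\M|\alpha$ by a subset of an ordinal inside the common initial segment. Since $\eta$ is the largest cardinal, such a code is necessarily a subset of $\eta$, and the obstruction is exactly the one you flag but do not resolve: the coding bijection $g : \eta \to \mathrm{tc}(\{y\})$ --- and therefore the code $a_y \subseteq \eta$ --- need not lie in $\M|\alpha$. When $\rho_1(\M|\alpha) = \eta$, the relevant subsets of $\eta$ are precisely the ones that first appear at level $\alpha+1$, which is the very level you are trying to determine; so the argument is circular at exactly the stages where new information enters. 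There is a second, independent problem: even granting agreement of the two ultrapower maps pointwise on $\M|\alpha = \N|\alpha$, this does not determine $\pi^\M_E(\alpha)$, since $\alpha \notin \M|\alpha$ and $\pi^\M_E$ can be discontinuous at $\alpha$ (as it will be whenever $\cf^\M(\alpha) \leq \cp(E)$, which can certainly happen below $\eta$). Without $\pi^\M_E(\alpha) = \pi^\N_E(\alpha)$ you cannot identify which level of $\P$ carries the predicate you want to pull back, so the successor step does not close, and the limit step has the same problem for the next successor.

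The paper avoids all of this by not inducting on levels at all. It fixes a sound $\N_0 \inseg \N$ with $\rho_1(\N_0) = \eta$ (every element of $\N$ lies in such an initial segment), lets $p$ be its first standard parameter, and recovers $\N_0$ inside $\M$ as the transitive collapse of a $\Sigma_1$ hull: it shows the set $S$ of indices $A_{\phi,t} \in E_a$ lies in $\M$ by amenability, reads off from $S$ the complete theory $T$ of $(\pi^\N_E(\N_0),\pi^\N_E(p))$ in a language with constants from $\eta$, builds a model $\N^*$ of $T$ in $\M$, and identifies $\N^* = \N_0$ because $\rho_1(\N_0)=\eta$ makes $\N_0$ the transitive collapse of $\mathrm{Hull}_1^{\pi^\N_E(\N_0)}(\pi^\N_E(p)\cup\pi^\N_E[\eta])$. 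The soundness and projectum hypotheses do exactly the work that your coding step was trying to do, but packaged so that the object being recovered is a sound hull and the amenability of $E$ to $\M$ is applied once, to the family $\{A_{\phi,t} : \phi,t\}$, rather than level by level. If you want to repair your approach, you would need to replace ``induct on ordinal height'' by ``induct on sound initial segments projecting to $\eta$'', at which point you are essentially reproducing the paper's argument.
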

\begin{proof} We assume that $\M$ and $\N$ are premice. We first show that $\N\subseteq \M$. Suppose $\N_0\inseg \N$ is such that $\rho(\N_0)=\eta$. Without losing generality, we suppose $\rho_1(\N_0)=\eta$. Otherwise we could work with master codes. We claim that $\N_0\in \M$. 

To see this, let $f\in \M$ and $a\in [\nu_E]^{<\omega}$ be such that $\pi_E^\M(f)(a)=\pi_E^\N(\N_0)$. Given $s\in [\cp(E)]^{\card{a}}$, let $p_s$ be the first standard parameter of $f(s)\insegeq \M$. Let $p$ be the first standard parameter of $\N_0$. Let $n$ be the length of $p$ and let $\mathcal{L}$ be the language of premice augmented by $n$ constant symbols $c_1,..., c_n$ and constants for each $\a<\eta$. The intended interpretation of $c_1,..., c_n$ is the standard parameter of the structure. 

Given $t\in [\eta]^{<\omega}$ and a formula $\phi(c_1,...,c_n, x_1,..., x_k)$, set 
\begin{center}
$A_{\phi, t}=\{ s\in [\cp(E)]^{\card{a}}: (f(s), p_s)\models \phi[c_1,..., c_n, t]\}$.
\end{center}
We have
\begin{center}
$\pi_E^\N(\N_0)\models \phi[\pi_E^\N(p), \pi_E^\N(t)]$ if and only if $A_{\phi, t}\in E_a$.
\end{center}
Because $E$ is amenable to $\M$, we have that 
\begin{center}
$S=_{def}\{ A_{\phi, t}\in E_a: \phi$ a formula and $t\in [\eta]^{<\omega}\}\in \M$.
\end{center}
Let $T=\{(\phi, t): A_{\phi, t}\in S\}$. $T$ is a consistent complete $\mathcal{L}$-theory implying that there is $\N^*\in \M$ that is the model of $T$. We can assume that $\N^*$ is generated by $c_1^{\N^*},..., c_n^{\N^*}$ and finite sequences from $\eta$. It follows that we have an embedding\footnote{$Hull_1$ is the $\Sigma_1$ Skolem hull.}
\begin{center}
$j: \N^*\rightarrow  Hull_1^{\pi_E^{\N}(\N_0)}(\pi_E^\N(p)\cup \pi_E^\N[\eta])$.
\end{center}
Clearly $j$ is surjective. As $\N_0$ is the transitive collapse of $Hull_1^{\pi_E^{\N}(\N_0)}(\pi_E^\N(p)\cup \pi_E^\N[\eta])$, we have that $\N^*=\N_0$, implying that $\N_0\in \M$. 

We now have that $\N\subseteq \M$. Similarly, $\M\subseteq \N$. It follows that $\lfloor \M \rfloor=\lfloor \N \rfloor$\footnote{$\lfloor K \rfloor$ is the universe of structure $K$.}, implying that $\pi^\M_E=\pi^\N_E$. Therefore, $\M=\N$ as premice.
\end{proof}

\begin{corollary}\label{cor to equality} Suppose $\M$ and $\N$ are two lbr premice or premice having the same largest cardinal $\eta$ such that
\begin{enumerate}
\item $\M, \N\models {\sf{ZFC-Powerset}}"$,
\item $\M, \N\models ``\eta$ is an inaccessible cardinal", and
\item $\M|\eta=\N|\eta$.
\end{enumerate}
Suppose $(\W_\a, E_\a, \pi_{\a, \b}: \a<\b<\iota)$ is an amenable linear iteration of $\M|\eta$ with a last model. Assume that the last models of $p^\M$ and $p^\N$ coincide. Then $\M= \N$.
\end{corollary}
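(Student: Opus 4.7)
The plan is to prove the corollary by induction on the length $\iota$ of the iteration $p$, applying Lemma \ref{equality from ultrapowers} at each successor stage. The base case $\iota = 1$ is immediate: there are no extenders, so the last models of $p^\M$ and $p^\N$ are $\M$ and $\N$ themselves, and the assumed coincidence yields $\M = \N$.

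Before the inductive step, I would verify that the hypotheses of Lemma \ref{equality from ultrapowers} propagate along the iteration. By induction on $\a$, each pair $\M_\a, \N_\a$ shares the common largest cardinal $\eta_\a = \pi^\M_{0,\a}(\eta) = \pi^\N_{0,\a}(\eta)$, both satisfy $\textsf{ZFC-Powerset}$, both think $\eta_\a$ is inaccessible, and $\M_\a \mathbin{|} \eta_\a = \W_\a = \N_\a \mathbin{|} \eta_\a$. The key observation is that $\pi^{\M_\a}_{E_\a}(\W_\a) = \W_{\a+1}$: any ultrapower representative $[a,f]$ with $f : [\cp(E_\a)]^{<\omega} \to \W_\a$ in $\M_\a$ has domain and range of rank below the inaccessible $\eta_\a$, hence $f$ itself lies in $V_{\eta_\a}^{\M_\a} = \W_\a$. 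The same inaccessibility argument shows that amenability of $E_\a$ to $\W_\a$ (built into the definition of amenable linear iteration) upgrades to amenability to $\M_\a$ and $\N_\a$, since any $\M_\a$-subset of $\powerset(\cp(E_\a))$ automatically lies in $\W_\a$.

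For the inductive step, first consider the case where the last index $\iota - 1 = \gamma + 1$ is a successor ordinal. Then the last models of $p^\M$ and $p^\N$ are the ultrapowers $\mathrm{Ult}(\M_\gamma, E_\gamma)$ and $\mathrm{Ult}(\N_\gamma, E_\gamma)$, and their coincidence together with the propagation above furnishes exactly the hypotheses of Lemma \ref{equality from ultrapowers} applied to $\M_\gamma, \N_\gamma, E_\gamma$; the lemma then gives $\M_\gamma = \N_\gamma$. The initial segment $p \upharpoonright (\gamma + 1)$ is a strictly shorter amenable linear iteration whose application to $\M$ and $\N$ has matching last models, and the inductive hypothesis yields $\M = \N$.

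The main obstacle is the case when $\iota - 1 = \l$ is a limit ordinal and $\M_\l = \N_\l$ is a common direct limit; here the naive continuation from a later stage need not shorten the iteration when $\l$ is indecomposable (as already happens for $\l = \omega$). I would handle this by studying the set $S = \{\b \leq \l : \M_\b = \N_\b\}$: it contains $\l$ by hypothesis, is closed under suprema of strictly increasing sequences (direct limits of equal directed systems agree), and is successor-downward-closed by Lemma \ref{equality from ultrapowers}. The crux is then to show $S \cap [0, \l) \neq \emptyset$, from which applying the inductive hypothesis to $p \upharpoonright (\b + 1)$ for any such $\b$ completes the proof. I anticipate this last step follows from a direct-limit coherence argument exploiting that both systems $(\M_\b)_{\b<\l}$ and $(\N_\b)_{\b<\l}$ share the base models $\W_\b$ and act by the same extenders $E_\b$ while producing the same colimit; structural rigidity of premice together with $\pi^\M_{\b,\l} \upharpoonright \W_\b = \pi^\N_{\b,\l} \upharpoonright \W_\b$ should force the two systems to agree at cofinally many stages of $\l$.
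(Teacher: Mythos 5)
Your induction on $\iota$ is a genuinely different strategy from the paper's, and it stalls precisely where you flagged it might. The successor step is fine: if $\iota - 1 = \gamma + 1$, then the matching last ultrapowers feed directly into Lemma~\ref{equality from ultrapowers} to give $\M_\gamma = \N_\gamma$, and the inductive hypothesis takes over on the shorter iteration. Your propagation observations (that $\eta_\a$ stays the largest cardinal and inaccessible, that the $\W_\a$'s carry the iteration, and that amenability to $\W_\a$ upgrades to amenability to $\M_\a$ and $\N_\a$) are also correct and needed.

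The gap is the limit case, and it is a real gap, not a detail. You define $S = \{\b \le \l : \M_\b = \N_\b\}$, observe it contains $\l$, is successor-downward-closed by Lemma~\ref{equality from ultrapowers}, and (you claim) is closed under suprema. But none of this produces an element of $S \cap [0,\l)$: from $\l \in S$ alone, downward closure under successor and closure under suprema give nothing, since $\l$ is a limit and there is no way to descend. The sentence beginning ``I anticipate this last step follows from a direct-limit coherence argument \dots structural rigidity of premice \dots should force the two systems to agree at cofinally many stages'' is exactly the missing proof, and as written it is a hope, not an argument. Without it the induction does not close, even at $\iota - 1 = \omega$.

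The paper sidesteps the whole issue by not arguing globally about when $\M_\b = \N_\b$. Instead it fixes a single proper initial segment $\N_0 \inseg \N$ with $\rho(\N_0) = \eta$ and considers the \emph{least} $\b$ such that $\pi^\N_{0,\b}(\N_0) \in \M_\b$; such $\b$ exists because the last models coincide. If $\b = \a+1$, Lemma~\ref{equality from ultrapowers} pulls the membership back to stage $\a$, contradiction. If $\b$ is a nonzero limit, one uses that $\M_\b$ is a direct limit to find $\a < \b$ and $X \in \M_\a$ with $\pi^\M_{\a,\b}(X) = \pi^\N_{0,\b}(\N_0)$, and then the crucial point — the one your sketch is circling around — is that $\pi^\N_{0,\b}(\N_0)$ is the transitive collapse of $\mathrm{Hull}_1$ of its first standard parameter together with $\pi^\M_{\a,\b}[\eta_\a] = \pi^\N_{\a,\b}[\eta_\a]$ (the two maps agree on $\W_\a$), which forces $X = \pi^\N_{0,\a}(\N_0)$, again contradicting minimality. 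So $\b = 0$ and $\N_0 \in \M$; ranging over all such $\N_0$ gives $\N \subseteq \M$, symmetry gives $\M \subseteq \N$, and agreement of the universes plus $\pi_{p^\M} = \pi_{p^\N}$ gives $\M = \N$ as premice. This per-segment minimality argument absorbs the limit stage at no extra cost, which is exactly what your global induction could not do. If you want to salvage your route, the hull-and-standard-parameter computation I just described is the content you would need to extract and prove as your ``coherence'' lemma — but at that point you have essentially reproduced the paper's argument inside your induction, so the induction is doing no work.
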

\begin{proof} Let $p^\M=(\M_\a, E_\a, \pi_{\a, \b}^\M: \a<\b<\iota)$ and $p^\N=(\N_\a, E_\a, \pi_{\a, \b}^\N: \a<\b<\iota)$. We again start by showing that $\N\subseteq \M$. Let $\N_0\inseg \N$ be such that $\rho(\N_0)=\eta$. We want to show that $\N_0\in \M$.

Let $\b<\iota$ be the least such that $\pi^\N_{0, \b}(\N_0)\in\M_\b$. We must have that $\b=0$ or $\b=\a+1$. Indeed, if $\b>0$ is a limit ordinal then there is $\a<\b$ such that $\pi^\N_{0, \b}(\N_0)\in \rge(\pi_{\a, \b}^\M)$. It follows that $\pi_{0, \a}^\N(\N_0)\in \M_\a$. Suppose then $\b=\a+1$. It follows from \rlem{equality from ultrapowers} that $\pi_{0, \a}^\N(\N_0)\in \M_\a$. Therefore, $\b=0$.

We now have that $\N\subseteq \M$. The same proof shows that $\M\subseteq \N$, implying that $\lfloor \M \rfloor=\lfloor \N \rfloor$ and hence, $\pi_{p^\M}=\pi_{p^\N}$. Therefore, $\M=\N$.
\end{proof}

\section{Computing the powerset operation: Towards a Woodin cardinal that is a limit of Woodin cardinals}

The ${\sf{Direct\ Limit\ Independence\ Hypothesis}}$  essentially says that $\M_\infty(\P, \Sigma)$ is independent of $(\P, \Sigma)$. Below we give a more precise definition.

Assume ${\sf{ZFC}}$, and suppose $(\P,\Sigma)$ is a pure extender pair such that 
\begin{enumerate}
\item ${\sf{Ord}}\cap \P=\omega_1$,
\item $\Sigma$ is an $\omega_2$-strategy, and
\item $\Sigma$ acts on $L[\P]$.
\end{enumerate} 
We let $\mathcal{F}(\P, \Sigma)=\{ \R: \R$ is a $\Sigma$-iterate of $\P$ via a countable tree\footnote{This means that $\lh(\T)<\omega_1$.} $\T$ such that $\pi^\T$ exists$\}$. We let $\M_\infty(\P, \Sigma)$ be the direct limit of $\mathcal{F}(\P, \Sigma)$. 

\begin{definition}[${\sf{Direct\ Limit\ Independence\ Hypothesis}}$]\label{dli} Assume $V\models {\sf{AD_{\mathbb{R}}}}+{\sf{NLE}}+V=L(\powerset(\bR))$. Suppose $\Delta\subseteq \powerset(\bR)$ and $g\subseteq Coll(\omega_1, \bR)$ is $V$-generic. We then say that ${\sf{DLIH}}$ holds for $\Delta$ in $V[g]$ if whenever $(\P, \Sigma), (\Q, \Lambda)\in V[g]$ are two generic generators for $\Delta$, either 
\begin{center}
$\M_\infty(\P, \Sigma)\insegeq \M_\infty(\Q, \Lambda)$ or $\M_\infty(\Q, \Lambda)\insegeq \M_\infty(\P, \Sigma)$.
\end{center}
We write $V[g]\models {\sf{DLIH}}(\Delta)$ to mean that ${\sf{DLIH}}$ holds for $\Delta$ in $V[g]$. 
\end{definition}

In this section, we show that the ${\sf{Direct\ Limit\ Independence\ Hypothesis}}$ (${\sf{DLIH}}$) can be used to compute the powerset operation in generic generators. We will use this result to show that ${\sf{DLIH}}$ implies that if there is a non-generated set of reals then the $L[\vec{E}]$ construction of an $\N^*_x$-model reaches a Woodin cardinal that is a limit of Woodin cardinals. 

\begin{remark}\label{gap 1}
\normalfont
Our results also carry over to hod pair generic generators: meaning assuming one can resolve the issue raised in \rrem{gap 0}, our results will show that if ${\sf{HPC}}$ fails then the hod pair construction of some $\N^*_x$-model reaches a Woodin cardinal that is a limit of Woodin cardinals.
\end{remark}

\begin{terminology}
\normalfont Suppose $(\P, \Sigma)$ is a mouse pair. Let $\nu_\P=\sup(W(\P))$ and $\P^b=\P|(\nu^+_\P)^\P$. We say $\eta$ is a \textbf{weak cutpoint} of $\P$ if $\eta>\nu_\P$ and all extenders overlapping $\eta$ have critical points $\leq\nu_\P$.
\end{terminology}

\rlem{comp of powerset} provides a definition of $\P|(\eta^+)^\P$ from $\P|\eta$, where $\eta$ is a weak cutpoint of $\P$. The aforementioned computation shows that $\P|(\eta^+)^\P$ is simple, and yields a contradiction when we also assume that $(\P, \Sigma)$ is backgrounded.


\begin{theorem}\label{comp of powerset} Assume $V\models {\sf{AD_{\mathbb{R}}}}+{\sf{NLE}}+V=L(\powerset(\bR))$. Suppose $(\Delta, (\P, \Sigma), g)$ is such that
\begin{enumerate}
\item $\Delta\subseteq \powerset(\bR)$ and $\Delta$ is closed under Wadge reducibility,
\item $g\subseteq Coll(\omega_1, \bR)$ is $V$-generic,
\item $(\P, \Sigma)\in V[g]$ is a pe-generic generator for $\Delta$ and
\item $V[g]\models {\sf{DLIH}}(\Delta)$.
\end{enumerate}
 Set $X=\pi^\Sigma_{\P, \infty}[\P^b]$, and let $\eta$ be a weak cutpoint of $\P$ that is an inaccessible cardinal of $\P$. Then $\P|(\eta^+)^\P\subseteq OD^V_{\P|\eta, X}$. 
\end{theorem}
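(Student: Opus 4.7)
The plan is to use ${\sf{DLIH}}(\Delta)$ together with the fact that $(\P, \Sigma)$ is a pe-generic generator for $\Delta$ to identify both $\M_\infty(\P, \Sigma)$ and the restriction of the direct limit embedding to $\P|\eta$ as objects canonically definable in $V$ from $\P|\eta$ and $X$. Once these are in place, $\P|(\eta^+)^\P$ is recovered by pulling back the corresponding initial segment of $\M_\infty(\P, \Sigma)$ under the embedding.

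First, I would show that $\M_\infty(\P, \Sigma)\in V$ and $\M_\infty(\P, \Sigma)\in OD^V_X$. By clause 4 of \rdef{generic generator def}, each fragment $\Sigma_{\P|\a}\rest HC^V$ lies in $V$, and by clause 5 its code is in $\Delta$. Moreover clause 6 tells us that every element of $\Delta$ is projective in some $Code(\Sigma_{\P|\a}\rest HC^V)$, and this data is encoded by $X$ together with $\P^b$. Thus the directed system $\mathcal{F}(\P, \Sigma)$ admits a cofinal subsystem computable in $V$ from $X$, and its direct limit agrees with $\M_\infty(\P, \Sigma)$. ${\sf{DLIH}}(\Delta)$ then forces this direct limit to be independent (up to end-extension) of the choice of pe-generic generator for $\Delta$, making $\M_\infty(\P, \Sigma)\in OD^V_X$.

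Second, I would argue that $\pi^\Sigma_{\P,\infty}\rest \P|\eta \in OD^V_{\P|\eta, X}$. We already have the embedding on $\P^b$ directly from $X$. The weak-cutpoint hypothesis on $\eta$ is the essential ingredient that extends this: any extender appearing in an iteration from $\P$ whose action overlaps the image of $\eta$ must have critical point at or below the corresponding image of $\nu_\P$, so its effect on $\eta$ is already captured by $X$; extenders with critical points strictly between $\nu_\P$ and $\eta$ (modulo iteration images) have length bounded by the image of $\eta$, hence live inside $\P|\eta$ and its iterates. Piecing these contributions together gives a uniform first-order definition of $\pi^\Sigma_{\P,\infty}\rest \P|\eta$ from $\P|\eta$ and $X$, perhaps mediated by a canonical factor system that compares arbitrary hulls of $\P|\eta$ using the $V$-fragments of $\Sigma$.

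Finally, for each $\N \inseg \P$ with $\eta \leq {\sf{Ord}}\cap \N \leq (\eta^+)^\P$, the image $\pi^\Sigma_{\P,\infty}(\N)$ is an initial segment of $\M_\infty(\P, \Sigma)$. By the first step $\M_\infty(\P, \Sigma)\in OD^V_X$ and by the second step $\pi^\Sigma_{\P,\infty}\rest \P|\eta$ is in $OD^V_{\P|\eta, X}$. Hence $\N$ is recoverable as the preimage of its image under this embedding, yielding $\N\in OD^V_{\P|\eta, X}$ and therefore $\P|(\eta^+)^\P \subseteq OD^V_{\P|\eta, X}$. The main obstacle is the second step: carrying out the canonical reconstruction of $\pi^\Sigma_{\P, \infty}\rest \P|\eta$ from $\P|\eta$ and $X$. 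This will require a careful bookkeeping of which extenders can act on material strictly between $\nu_\P$ and $\eta$ along the iteration, and will use the weak-cutpoint hypothesis together with the fact that the generator is backgrounded (so that enough condensation-type properties of $\Sigma$ are available) to make the reconstruction first-order over $V$.
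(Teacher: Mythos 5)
The proposal diverges sharply from the paper's proof, and the divergence is not cosmetic: it rests on a claim — that $\pi^\Sigma_{\P,\infty}\rest\P|\eta$ is $OD^V_{\P|\eta,X}$ — that the paper never proves and that is substantially stronger than what is actually needed. You flag this as the ``main obstacle,'' but it is more than an obstacle; it is the whole problem, and the sketch you give (``careful bookkeeping of which extenders can act on material strictly between $\nu_\P$ and $\eta$'') does not engage with why it is hard. The direct limit embedding is computed using \emph{all} of $\Sigma$, including its action far above $\eta$, and $X=\pi^\Sigma_{\P,\infty}[\P^b]$ carries no information about how $\Sigma$ moves the block $(\nu_\P,\eta)$. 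The weak cutpoint hypothesis does confine extenders overlapping $\eta$ to have critical point in $\P^b$, but that bounds which extenders can \emph{overlap} $\eta$, not how iteration trees above $\eta$ feed back into the embedding's values on $\P|\eta$ through cofinality and branch choices. Making the embedding ordinal-definable from $\P|\eta$ and $X$ is precisely the sort of statement that requires a comparison theorem, not bookkeeping.

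The paper's proof sidesteps defining the embedding entirely. It gives a \emph{forcing characterization of the set} $\P|(\eta^+)^\P$: a candidate $\W$ with $\P|\eta\insegeq\W$ is an initial segment of $\P|(\eta^+)^\P$ iff $Coll(\omega_1,\bR)$ forces the existence of a pe-generic generator $(\Q,\Lambda)$ for $\Delta$ satisfying a short list of conditions ($\Q^b=\P^b$, $\Q|\eta=\P|\eta$, $\eta$ a non-measurable inaccessible weak cutpoint of $\Q$, $\pi^\Lambda_{\Q,\infty}[\Q^b]=X$, $\M_\infty(\Q,\Lambda)=\N$, $\W\inseg\Q$). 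The left-to-right direction is where the real work is: given two generic generators $(\P,\Sigma)$ and $(\Q,\Lambda)$ meeting these conditions, one must show $\P|(\eta^+)^\P=\Q|(\eta^+)^\Q$. This is done by reflecting into a countable hull, tracking the comparison via the matching of the bottom-part images (facts (1)--(8)), showing by a $\mathcal{Q}$-structure argument that the two strategies pick the same cofinal branch at the crucial limit stage, and then — crucially — invoking the amenable linear iteration machinery of Section 5 (\rcor{cor to equality} and \rlem{equality from ultrapowers}) to pass from ``the iterates agree'' back to ``$\P_0=\Q_0$.'' None of this amounts to showing the direct limit embedding is $OD$; it establishes the weaker but sufficient extensional agreement of the powersets.

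There is also a secondary gap in your final step even if step 2 were granted: to recover $\N\inseg\P$ with $\eta\le{\sf{Ord}}\cap\N\le(\eta^+)^\P$ as a ``preimage of its image,'' you need the embedding on $\N$ itself, not merely on $\P|\eta$, and you need to know which initial segments of $\M_\infty(\P,\Sigma)$ actually lie in the range of $\pi^\Sigma_{\P,\infty}$ — which is again the powerset identification you are trying to establish. The paper avoids this circularity by characterizing the set directly rather than reconstructing it through the embedding. To repair your approach you would essentially have to reproduce the paper's comparison argument (including the amenable linear iteration lemma), at which point you would no longer need the intermediate claim about the embedding being $OD$.
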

\begin{proof} We can assume that $\eta$ is not a measurable cardinal of $\P$. If it is then we can change $\P$ with $Ult(\P, E)$ where $E\in \vec{E}^\P$ is the Mitchell order $0$ extender such that $\cp(E)=\eta$. We then have that $Ult(\P, E)|(\eta^+)^\P=\P|(\eta^+)^\P$, and (since $\eta>{\sf{Ord}}\cap \P^b$) setting $\P_E=Ult(\P, E)$, $X=\pi^{\Sigma_{\P_E}}_{\P_E, \infty}[\P^b]$ (notice that $\P^b=\P_E^b$).

Set $\N=\M_\infty(\P, \Sigma)$. Because $Coll(\omega_1, \bR)$ is a homogenous poset, ${\sf{DLIH}}(\Delta)$ implies that $\N\in \H^V$. We claim that $\P|\eta\insegeq\W\insegeq \P|(\eta^+)^\P$ if and only if it is forced by $Coll(\omega_1, \bR)$ that there is a generic generator $(\Q, \Lambda)$ for $\Delta$ such that 
\begin{enumerate}
\item $\Q^b=\P^b$, 
\item $\Q|\eta=\P|\eta$,
\item $\eta$ is a weak cutpoint of $\Q$, 
\item $\eta$ is a non-measurable inaccessible cardinal of $\Q$, 
\item $\pi^{\Lambda}_{\Q, \infty}[\Q^b]=X$,
\item $\M_\infty(\Q, \Lambda)=\N$ and
\item $\W\inseg \Q$, 
\end{enumerate}
To see the right-to-left direction, fix $q\in Coll(\omega_1, \bR)$. As $V[g]=V[g_q]$, we have that in $V[g_q]$ we can take $(\Q, \Lambda)=(\P, \Sigma)$. 

Fix then a condition $q\in Coll(\omega_1, \bR)$ and $(\dot{\Q}, \dot{\Lambda})\in V^{Coll(\omega_1, \bR)}$ such that $q\forces ``(\dot{\Q}, \dot{\Lambda})$ has properties 1-7". We want to see that $\W\insegeq \P$.  It is enough to work in $V[g]$ and show that if $(\Q, \Lambda)=(\dot{\Q}(g_q), \dot{\Lambda}(g_q))$ then $\P|(\eta^+)^\P=\Q|(\eta^+)^\Q$. 

Because ${\sf{AD_\bR}}$ holds, we can find a set of reals $B$ such that $\{(\Q, \Lambda), (\P, \Sigma)\}\in L(B, \bR)[g]$ and $(B, \bR)^\#$ exists. Fix then a countable $Z\prec (B, \bR)^\#$ such that letting $\pi: (B_\sigma, \sigma)^\#\rightarrow Z$ be the transitive collapse and $N=_{def}L(B_\sigma, \sigma)$,
\begin{enumerate}
\item $g_\sigma=_{def}g\cap Coll(\omega_1^N, \sigma)$ is $N$-generic,
\item $\pi$ extends to $\pi^+:N[g_\sigma]\rightarrow L(B, \bR)[g]$,
\item $\{(\P, \Sigma), (\Q, \Lambda)\}\subseteq \rge(\pi^+)$, 
\item $\eta<\omega_1^N$ and
\item $\omega_1^N$ is inaccessible in both $\P$ and $\Q$ (to ensure this condition we use clause 3 of Definition \ref{generic generator}).
\end{enumerate}
Let $\M$ be such that $\pi^+(\M)=\N$, and set $\P'=\P|\omega_1^N$ and $\Q'=\Q|\omega_1^N$. We have that $\M$ is a $\Sigma_{\P'}$-iterate of $\P'$ and a $\Lambda_{\Q'}$-iterate of $\Q'$. Let $\T'$ be $\P'$-to-$\M$ tree according to $\Sigma_{\P'}$ and $\U'$ be the $\Q'$-to-$\M$ tree according to $\Lambda_{\Q'}$. Because $\omega_1^N$ is inaccessible in both $\P$ and $\Q$, letting $\T$ and $\U$ be the copies of $\T'$ and $\U'$ onto $\P$ and $\Q$ respectively, and letting $\P^+$ and $\Q^+$ be the last models of $\T$ and $\U$ respectively, we have that $\P^+=\P|\xi$ and $\Q^+=\Q|\xi$ where $\xi={\sf{Ord}}\cap \M$. 

Set $\nu=\sup(W(\M))$. Let $\R$ be the least node of $\T$ such that $\R|\nu=\M|\nu$ and let $\S$ be the least node of $\U$ such that $\S|\nu=\M|\nu$. Notice that because $\P^+$ and $\Q^+$ have no Woodin cardinals above $\nu$, $\T_{\geq \R}$ and $\U_{\geq \S}$\footnote{These are the portions of $\T$ and $\U$ that come after $\R$ and $\S$.} are two normal trees on $\R$ and $\S$ respectively that are above $\nu$. 

We now have that\\\\
(1) $\pi^\Sigma_{\P, \infty}=\pi^{\Sigma_\R}_{\R, \infty}\circ \pi^{\Sigma}_{\P, \R}=\pi^{\Sigma_{\P^+}}_{\P^+, \infty}\circ \pi^{\Sigma_\R}_{\R, \P^+}\circ \pi^{\Sigma}_{\P, \R}$,\\
(2) $\pi^\Lambda_{\Q, \infty}=\pi^{\Lambda_\S}_{\S, \infty}\circ \pi^{\Lambda}_{\Q, \R}=\pi^{\Lambda_{\Q^+}}_{\Q^+, \infty}\circ \pi^{\Lambda_\S}_{\S, \Q^+}\circ \pi^{\Lambda}_{\Q, \S}$,\\
(3)  $\pi^{\Sigma}_{\P, \infty}[\P^b]=\pi^{\Lambda}_{\Q, \infty}[\Q^b]$ and $\P^b=\Q^b$,\\
(4) $\pi^{\Sigma_{\R}}_{\R, \P^+}[\M^b]=\pi^{\Lambda_{\S}}_{\S, \Q^+}[\M^b]=id$,\\
(5) $\pi^{\Sigma_{\P^+}}_{\P^+, \infty}[\M^b]=\pi[\M^b]=\pi^{\Sigma_{\Q^+}}_{\Q^+, \infty}[\M^b]$.\\\\
It follows that\\\\
(6) $\pi^{\Sigma}_{\P, \R}[\P^b]=\pi^{\Lambda}_{\Q, \S}[\Q^b]$.\\\\
Because the generators of $\T_{\leq \R}$ and $\U_{\leq \S}$ are contained in $\nu$, it follows from (6) that\\\\
(7) $\pi^{\Sigma}_{\P, \R}[\P|\eta]=\pi^{\Lambda}_{\Q, \S}[\Q|\eta]$.\\\\
Since $\eta$ is a non-measurable inaccessible cardinal both in $\P$ and $\Q$, it follows from (7) that\\\\
(8) $\pi^{\Sigma}_{\P, \R}(\eta)=\pi^{\Lambda}_{\Q, \S}(\eta)$.\\\\
Set $\xi=\pi^{\Sigma}_{\P, \R}(\eta)$ and $\V=\R|\xi=\S|\xi$. Notice now that both $\R$-to-$\M$ and $\S$-to-$\M$ iterations proceed by first iterating $\V$ to an initial segment of $\M$. Because $\M$ has no Woodin cardinals above $\nu$, the aforementioned iterations produce the same tree on both $\R$ and $\S$. We let $\K$ be this tree. It is based on $\V$. 

We must now have that $\Sigma_{\R}(\K)=\Lambda_\S(\K)$. Towards a contradiction assume that $\Sigma_\R(\K)\not =\Lambda_\S(\K)$. Let $b=\Sigma_\R(\K)$ and $c=\Lambda_\S(\K)$. Notice that since $\eta$ is a weak cutpoint, both $\Q(b, \K)$ and $\Q(c, \K)$ must exist and moreover, $\Q(b, \K)$ and $\Q(c, \K)\insegeq \M$. It follows that $\Q(b, \K)=\Q(c, \K)$, implying that $b=c$. 

To finish, we will use \rcor{cor to equality}. Set $\P_0=\P|(\eta^+)^\P$ and $\Q_0=\Q|(\eta^+)^\Q$. We first claim that $\pi^{\Sigma}_{\P, \R}(\P_0)=\pi^{\Lambda}_{\Q, \S}(\Q_0)$. To see this, let $\P_1=\pi^{\Sigma}_{\P, \R}(\P_0)$ and $\Q_1=\pi^{\Lambda}_{\Q, \S}(\Q_0)$. Let $d$ be the branch of $\K$ according to both $\Sigma_\R$ and $\Lambda_\S$. We then have that $\pi^\K_d(\P_1)=\pi^\K_d(\Q_1)$. It follows from \rcor{cor to equality} that $\P_1=\Q_1$. 

To conclude that $\P_0=\Q_0$, notice that because $\pi^{\Sigma}_{\P, \R}[\P^b]=\pi^{\Lambda}_{\Q, \S}[\Q^b]$ and the generators of both $\T_{\leq \R}$ and $\U_{\leq \S}$ are contained in $\M|\nu$, the extenders used on the main branch of $\P$-to-$\R$ coincide with the extenders used on the main branch of $\Q$-to-$\S$. It follows that there is an amenable linear iteration $p$ of $\P|\eta$ such that $p^{\P_0}$ and $p^{\Q_0}$ have the same last model. It follows that $\P_0=\Q_0$ (see \rcor{cor to equality}).
\end{proof}

\subsection{A Woodin cardinal that is a limit of Woodin cardinals}

In this subsection, we use our computation of powersets of generic generators to show that assuming ${\sf{DLIH}}$, the boundedness of Woodin cardinals of direct limits of generic generators and the existence of a non-generated set implies that there is a mouse with a Woodin cardinal that is a limit of Woodin cardinals in a somewhat stronger sense.

\begin{definition}\label{boundedness of bottom part} 
    Assume $V\models {\sf{AD_{\mathbb{R}}}}+{\sf{NLE}}+V=L(\powerset(\bR))$. Suppose $\Delta\subseteq \powerset(\bR)$ and $g\subseteq Coll(\omega_1, \bR)$ is $V$-generic. We then say that $\Delta$ has a \textbf{bounded bottom part} if  there is $\gg<\Theta^V$ such that whenever $(\P, \Sigma)\in V[g]$ is a generic generator for $\Delta$, $$\sup({\sf{Ord}}\cap \pi_{\P, \infty}^\Sigma(\P^b))<\gg.$$ 
    We then say that $\gg$ bounds the bottom part of $\Delta$.
\end{definition}

The proof of the next theorem uses standard arguments from descriptive inner model theory. These arguments are, by now, folkloric, but many of them originate in the work of John Steel and Hugh Woodin. Particularly important for us is Woodin's proof of the Mouse Set Conjecture in $L(\bR)$. The reader may wish to consider \cite[Chapter 4]{LSA} for notions such as Suslin capturing and other similar concepts. This material is also covered in \cite[Chapter 7.2]{SteelCom}. See also \cite[Theorem 10.3]{DMATM}, \cite[Theorem 11.3]{DMATM} and \cite[Lemma 2.4]{MSCR} (the last two results will use to establish (2) in the proof of \rthm{towards wlw}). 

\begin{theorem}\label{towards wlw} Suppose $V\models ``{\sf{AD_{\mathbb{R}}}}+{\sf{NLE}}+V=L(\powerset(\bR))+\neg {\sf{LEC}}$. Set $\Delta=\Delta_{gen, pe}^V$ and suppose that $Coll(\omega_1, \bR)$ forces that ${\sf{DLIH}}(\Delta)$ holds and that $\Delta$ has a bounded bottom part. Fix $\gg<\Theta^V$ that bounds the bottom part of $\Delta$, and suppose $\Gamma_0$ is a good pointclass such that $\Delta\subseteq \utilde{\Delta}_{\Gamma_0}$, and if $\gg'$ is the least member of the Solovay sequence such that $\gg<\gg'$ then $\gg'<\d_{\Gamma_0}$\footnote{This condition implies that there is a pre-well-ordering $\leq^*\in \Gamma$ whose length is at least $\gamma$.}. Let $\Gamma$ be a good pointclass such that $\Gamma_0\subseteq \utilde{\Delta}_{\Gamma}$ and let $(F, x_0, \Gamma^*, (N, \Psi), A)$ be as in the proof of \rthm{existence of gg} relative $\Gamma$. Then for any $x$ Turing above $x_0$, letting $F(x)=(\N^*_x, \M_x, \d_x, \Sigma_x)$, $\d_x$ is a limit of Woodin cardinals in the $L[\vec{E}]$-construction of $\N^*_x|\d_x$.
\end{theorem}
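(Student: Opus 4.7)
I would argue by contradiction. Suppose $\d_x$ is not a limit of Woodin cardinals in the $L[\vec E]$-construction $\N$ of $\N^*_x|\d_x$. By \rrem{gap 0}, $L[\N]\models\text{``}\d_x\text{ is a Woodin cardinal''}$, so the supremum $\nu$ of $W(\N)\cap\d_x$ is strictly below $\d_x$, and one can pick a strong cardinal $\k\in S(\N)$ with $\nu<\k<\d_x$; by \rcor{cor to strongs in the construction}, $\k$ is also a strong cardinal of $\N^*_x|\d_x$, so the requisite backgrounded extenders are available on $\vec{E}^{\N^*_x}$. Let $g\of Coll(\omega_1,\bR)$ be $V$-generic and carry out the fully backgrounded pe-gg-producing iteration of $(\N^*_x,\d_x,\Sigma_x)$ relative to $\k$ (\rdef{g.g. production}) to obtain a pe-generic generator $(\P,\Sigma)\in V[g]$ for $\Delta$.

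Because every extender used in the iteration has critical point $\k_\a\geq\k>\nu$, the ordinal $\nu$ is fixed throughout, and hence $W(\P)\of\nu$. It follows that $\P^b=\P|((\sup W(\P))^+)^\P$ has ordinal height at most $\nu^{+\P}$, and by the bounded-bottom-part hypothesis $X:=\pi^\Sigma_{\P,\infty}[\P^b]$ satisfies $\sup({\sf{Ord}}\cap X)<\gg$. Using clause 3 of \rdef{generic generator def}, fix a non-measurable inaccessible cardinal $\eta$ of $\P$ with $\eta>\nu^{+\P}$; $\eta$ is then a weak cutpoint of $\P$, so \rthm{comp of powerset} applies and yields
\[
\P|(\eta^+)^\P\of OD^V_{\P|\eta,X}.
\]

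To extract a contradiction, I would let $\eta$ range cofinally in $\omega_1^{V[g]}$ through such weak cutpoints and use the rigidity of amenable linear iterations (\rcor{cor to equality}) to glue the local $OD^V$-computations coherently: the premouse structure on each $\P|(\eta^+)^\P$ is pinned down by its image under a common amenable linear iteration of $\P|\eta$, so different weak cutpoints give compatible descriptions. This should assemble into a single $OD^V$-definition of $\P$ from a single real parameter $r\in V[g]$ (e.g.\ $\P|\nu^{+\P}$, which is countable in $V[g]$) together with the bounded-in-$\gg$ parameter $X$; by clause 4 of \rdef{generic generator def}, the $\omega_2$-strategy $\Sigma$ is the unique extension of its $HC^V$-restriction, and so $\Sigma$ inherits the same $OD^V$-status. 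By clause 6 of \rdef{generic generator def}, every $A\in\Delta$ is then projective in some $Code(\Sigma_{\P|\a}\rest HC^V)$, placing $\Delta$ inside the pointclass of sets of reals that are $OD^V$ from $r$ and $X$, whose Wadge rank is at most $\gg'$, the least Solovay ordinal above $\sup({\sf{Ord}}\cap X)$. On the other hand, $\N^*_x$ Suslin, co-Suslin captures $\Gamma\supseteq\Gamma_0$, and our choice $\gg'<\d_{\Gamma_0}$ guarantees a prewellordering of length $\geq\gg'$ in $\Gamma_0$; running \rthm{existence of gg} with ${\sf{NLE}}$-universality on the $\Gamma$-side shows that such a prewellordering is coded by a pure extender pair, hence lies in $\Delta$, giving $w(\Delta)\geq\gg'$ and contradicting the preceding bound.

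\textbf{Main obstacle.} The crux is the gluing step: converting the local $OD^V$-computations of \rthm{comp of powerset} at each weak cutpoint $\eta$ into a uniform $OD^V$-definition of all of $\P$ from a single real parameter together with $X$. \rcor{cor to equality} supplies the essential rigidity, since two premice with the same image under a shared amenable linear iteration must coincide; but additional care is required to verify that the local defining procedures commute with one another as $\eta$ varies, that a single real $r\in V[g]$ suffices to coordinate them (so the resulting pointclass is truly bounded below $\gg'$), and that $OD^V$-status is preserved in passing from $\Sigma\rest HC^V$ to the full $\omega_2$-strategy via clause 4. Once this gluing is in place, the Wadge-rank comparison with the prewellorderings captured by $\Gamma_0$ closes the argument.
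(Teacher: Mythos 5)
You correctly identify \rthm{comp of powerset} as the local engine and set up the gg-producing iteration exactly as the paper does, but the strategy you propose for closing the argument is genuinely different from the paper's, and it has gaps that would sink it. The paper does \emph{not} glue the local $OD^V$-computations into a global definition of $\P$ and then bound $w(\Delta)$; instead it uses the local computation at a carefully chosen Woodin $\beta$ inside a strong Suslin/co-Suslin capturing background $Q$ to get a club of hulls with the condensation property $\powerset(\nu_\alpha)\cap\P\subseteq\powerset(\nu_\alpha)\cap W_\alpha$, and then invokes the universality/covering machinery of \cite[Claim 4.1]{ESC} and \cite[Chapter 2.3]{HMMSC} to manufacture a superstrong extender on the sequence of $\P|\beta$ with critical point above $\nu$. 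That superstrong is a Woodin, contradicting $\sup W(\P)=\nu$ directly. No Wadge comparison is involved.

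Your final step is flatly wrong: you claim that a prewellordering of length $\geq\gg'$ lying in $\Gamma_0$ is, ``by \rthm{existence of gg} with ${\sf{NLE}}$-universality,'' coded by a pure extender pair and hence in $\Delta$. But under the standing hypothesis $\neg{\sf{LEC}}$ it is precisely \emph{not} the case that sets of reals beyond $\Delta$ are generated by countable pure extender pairs; that is the whole content of $\neg{\sf{LEC}}$. \rthm{existence of gg} produces a \emph{generic} generator of height $\omega_1$ living in $V[g]$, not a countable pure extender pair in $V$ whose strategy Wadge-dominates a given $\Gamma_0$ set. So the lower bound $w(\Delta)\geq\gg'$ is unsupported. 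The upper bound is also shaky: being $OD^V$ from a real $r$ and a set of ordinals $X\subseteq\gamma$ does \emph{not} obviously force Wadge rank $\leq\gg'$; the paper extracts from $X\cap{\sf{Ord}}\subseteq\gg$ only the much more modest fact (1.1) that for a cone of $y$, $OD^V_X(y)\cap\bR\subseteq C_{\Gamma_0}(y)$, and this is fed into the hull-condensation argument, not into a Wadge bound on $\Delta$. In short, replace the ``glue-and-bound-$w(\Delta)$'' plan with the ``find $\beta$ where $C_\Gamma(Q|\beta)\models\beta$ is Woodin, build the continuous chain of hulls closed under $C_{\Gamma_0}$, and run the superstrong-producing covering argument'' plan.
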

\begin{proof}
Let $g\subseteq Coll(\omega_1, \bR)$ be generic and fix $x\in \bR$ which is Turing above $x_0$. Set $M=M_x$, $\d=\d_x$ and suppose that $\d$ is not a limit of Woodin cardinals of the $L[\vec{E}]$-construction of $M|\d$. Let $\P^*$ be the output of the $L[\vec{E}]$-construction of $M|\d$ and let $\k$ be the least strong cardinal of $M|\d$ that is bigger than $\nu=_{def}\sup(W(\P^*))$. Working in $V[g]$, let $\vec{M}=(M_\a, j_{\a, \b}: \a<\b<\omega_1)$ be a pe-gg-producing iteration of $M$ relative to $\k$. Let $\P=j_{0, \omega_1}(\P^*)$ and $\k_\a=j_{0, \a}(\k)$. Recall that $\cp(j_{\a, \a+1})=\k_\a$. Let $\Sigma$ be the strategy of $\P$ induced by $(\Sigma_x)_{M_{\omega_1}}$.

Let $B\in \utilde{\Delta}_{\Gamma^*}$ be a set of reals coding a self-justifying system $(D_i: i<\omega)$ such that $D_0=\{ (x, y)\in \bR^2: x\in C_{\Gamma}(y)\}$.

Let $X=\pi^{\Sigma}_{\P, \infty}[\P^b]$. It follows from \rlem{comp of powerset} that for each inaccessible weak cutpoint $\eta$ of $\P$, $\P|(\eta^+)^\P\in OD^V_{\P|\eta, X}$. Because $\gg$ bounds the bottom part of $\Delta$, we have that $X\cap {\sf{Ord}}\subseteq \gg$. We use this fact to establish (1.1) below. It follows from our choice of $\Gamma$ that (see the paragraph before the statement of \rthm{towards wlw}).\\\\
(1) for a cone of $y\in \bR$, \\
 (1.1) $OD^V_X(y)\cap \bR\subseteq C_{\Gamma_0}(y)$\\ 
 (1.2) $C_{\Gamma_0}(y)\subseteq C_{\Gamma}(y)$\footnote{$C_{\Gamma_0}$ is the largest countable $\Gamma$ set. See \cite{Ma08}.}, and\\
 (1.3) $(\N^*_y, \d_y, \Sigma_y)$ Suslin, co-Suslin captures $B$.\\\\
 We let $z$ be a base of the cone that satisfies (1). Let $(y_\a: \a<\omega_1)$ be the generic enumeration used to produce $\vec{M}$. We have that for some $\a<\omega_1$, $z\in \N^*_{y_\a}$. Fix such an $\a$ and let $Q=\N^*_{y_{\a+1}}$ and $\d_{y_{\a+1}}=\xi$. 

We have that $M_{\a+1}$ is obtained via a variant of the usual backgrounded construction done inside $Q$ over $M_\a|\kappa_\a$ (see \rthm{existence of gg}). We also have that there is a club $U\in Q\cap \powerset(\xi)$ such that\\\\
(2) for every $\b\in U$, $C_{\Gamma}(Q|\beta)\models ``\beta$ is a Woodin cardinal".\footnote{The reader may wish to consult \cite[Theorem 11.3]{DMATM} and \cite[Lemma 2.4]{HMMSC}. The proof of (2) follows these proofs. Essentially, $\Q$ term captures a sjs for $\Gamma$, and any transitive Skolem hull containing these terms witnesses (2). See the discussion before \cite[Theorem 1.9]{ADRUB} and \cite[Chapter 2.4]{HMMSC}} \\\\
We fix now a club $E\in Q\cap \powerset(\xi)$ such that\\\\
(3) for all $\b\in E$, $M_{\a+1}|\b$ and $j_{0, \a+1}(\P^*)|\b$ are produced via the relevant constructions of $Q|\b$ and $M_{\a+1}|\b$ respectively.\\\\\
Fix then $\b<\kappa_{\a+1}$ such that\\\\
(4) $M_{\a+1}|\b$ and $j_{0, \a+1}(\P^*)|\b$ are produced via the relevant constructions of $Q|\b$ and $M_{\a+1}|\b$ respectively, and $C_{\Gamma}(Q|\beta)\models ``\beta$ is a Woodin cardinal".\\\\
Such a $\b$ exists because $\k_{\a+1}$ is a $<\xi$-strong cardinal of $Q$ and $j_{0, \a+1}(\P^*)$\footnote{See the proof outlined in the footnote appearing in the last paragraph of the proof of \rthm{existence of gg}.}. Because $$\P|(\b^+)^\P=j_{0, \a+1}(\P^*)|(\b^+)^{j_{0, \a+1}(\P^*)}$$ and $\b>{\sf{Ord}}\cap \P^b$, it follows from (1.1)-(1.4), (4) and Theorem \ref{comp of powerset} that\\\\
(5) there is continuous chain $(X_\a: \a<\b)\in C_{\Gamma}(Q|\b)$ such that\\
(5.1) for every $\a<\b$, $C_\Gamma(Q|\b)\models \card{X_\a}<\b$,\\
(5.2) for every $\a<\b$, $X_\a\prec C_{\Gamma_0}(Q|\b)$,\\
(5.3) there is a club $K\in \powerset(\b)\cap C_{\Gamma}(Q|\b)$ such that for every $\a\in K$, if $W_\a$ is the transitive collapse of $X_\a$, then letting $\nu_\a=X_\a\cap \b$, $\powerset(\nu_\a)\cap \P\subseteq \powerset(\nu_\a)\cap W_\a$.\footnote{To get the club $K$ and the containment $\powerset(\nu_\a)\cap \P\subseteq \powerset(\nu_\a)\cap W_\a$, we use (1.1)-(1.4). Because of our choice of $\Gamma_0$ and $\Gamma$, $Q$ captures a sjs for $\Gamma_0$, implying that for every set $a\in W_\a$, $C_{\Gamma_0}(W_\a|\nu_\a, a)\subseteq W_\a$. This is the standard condensation lemma for sjs, see \cite[Lemma 2.19]{HMMSC}. We get that $\powerset(\nu_\a)\cap \P\subseteq \powerset(\nu_\a)\cap W_\a$ because (1.1) implies that $\powerset(\nu_\a)\cap \P\subseteq C_{\Gamma_0}(W_\a|\nu_\a, \P|\nu_\a)$.}\\\\
It now follows from the results of \cite{ESC} (e.g. see the proof of Claim 4.1 of \cite{ESC}) that there is a superstrong extender on the sequence of $\P|\b$ whose critical point is $>\nu$.\footnote{Notice that we built $\P$ using the backgrounding condition of \cite[Definition 3.1]{SteelCom}, which is the same as the backgrounding condition of \cite{ESC}. This is where we use the fact that $\b$ is a Woodin cardinal in $C_\Gamma(Q|\b)$: the universality argument presented in \cite[Claim 4.1]{ESC} and\cite[Chapter 2.3]{HMMSC} uses a Woodin cardinal.} Hence, $\nu<\sup(W(\P))$. 
\end{proof}

\section{Computing the powerset operation: towards superstrong cardinals}

In this section, we show how ${\sf{DLIH}}$ and the ${\sf{Bounded\ Direct\ Limits}}$ (${\sf{BDL}}$) can be used to prove ${\sf{LEC}}$ assuming ${\sf{NLE}}$. We start by introducing (${\sf{BDL}}$). 

\begin{definition}[${\sf{Bounded\ Direct\ Limits}}$]\label{bdl} Suppose $V\models {\sf{AD_\mathbb{R}}}+{\sf{NLE}}+V=L(\powerset(\bR))$ and suppose $\Delta\subsetneq \powerset(\bR)$. Let $g\subseteq Coll(\omega_1, \bR)$ be generic. We say that $\Delta$ has ${\sf{Bounded\ Direct\ Limits}}$ and write $V[g]\models {\sf{BDL}}(\Delta)$ if there is $\gamma<\Theta^V$ such that whenever $(\P, \Sigma)\in V[g]$ is a generic generator for $\Delta$, ${\sf{Ord}}\cap \M_\infty(\P, \Sigma)\subseteq \gg$.
\end{definition}

\rthm{getting superstrongs} is the main theorem of this section. Before we state it, we collect some concepts. 

\begin{definition}\label{background wlw}
Suppose $A\subseteq \bR$.  We say that there is an \textbf{$A$-wlw background}  if there is an externally iterable background $\mathbb{M}=(M, \d, \vec{G}, \Sigma)$ in the sense of \cite[Definition 4.1.4]{LSA} such that 
\begin{enumerate}
\item $M\models ``\d$ is a limit of Woodin cardinals" and
\item $\mathbb{M}$ Suslin, co-Suslin captures the first order theory of $(HC, A, \in)$\footnote{More precisely, letting $T=\{(\phi, \vec{x}): \phi$ is a formula, $\vec{x}\in \bR^{<\omega}$ and $(HC, A, \in)\models \phi[\vec{x}]\}$, $T$ is Suslin, co-Suslin captured by $\mathbb{M}$.}.
\end{enumerate}
\end{definition}
A typical example of an $A$-wlw background is an lbr pair $(M, \Sigma)$ such that for some $\d$, $M\models ``\d$ is a Woodin cardinal that is a limit of Woodin cardinals" and $(M, \d, \Sigma)$ Suslin, co-Suslin captures the first order theory of $(HC, A, \in)$. 

\begin{theorem}\label{getting superstrongs} Assume $V\models {\sf{AD_{\bR}}}+{\sf{NLE}}+V=L(\powerset(\bR))+\neg{\sf{LEC}}$. Assume further that for every $A\subseteq \bR$, there is an $A$-wlw background. Let $\Delta=\Delta_{gen, pe}$ and $g\subseteq Coll(\omega_1, \bR)$ is $V$-generic. Then either $V\models \neg {\sf{DLIH}}(\Delta)$ or $V\models \neg {\sf{BDL}}(\Delta)$
\end{theorem}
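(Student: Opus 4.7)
Assume toward a contradiction that both ${\sf{DLIH}}(\Delta)$ and ${\sf{BDL}}(\Delta)$ hold, and fix $\gamma<\Theta^V$ witnessing ${\sf{BDL}}(\Delta)$, so that ${\sf{Ord}}\cap \M_\infty(\P,\Sigma)\subseteq\gamma$ for every pe-generic generator $(\P,\Sigma)\in V[g]$ for $\Delta$. Since ${\sf{BDL}}$ trivially implies that $\Delta$ has a bounded bottom part with the same bound, \rthm{towards wlw} applies and shows that for $x$ in a Turing cone, $\d_x$ is a limit of Woodin cardinals in the $L[\vec E]$-construction of $\N^*_x|\d_x$. The plan is to amplify this conclusion, using the $A$-wlw background hypothesis, all the way to the presence of a long extender on the sequence of an $\omega_1+1$-iterable countable (lbr) mouse, contradicting ${\sf{NLE}}$.

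The amplification proceeds by feeding progressively more descriptive set-theoretic data into the background. I would choose $A\subseteq \bR$ coding a pre-well-ordering of length $\gamma$, a universal member of a good pointclass $\Gamma$ with $\Delta\subseteq \utilde{\Delta}_\Gamma$, and a self-justifying system extending $C_\Gamma$. By the $A$-wlw hypothesis, fix $\mathbb{M}=(M,\delta,\vec{G},\Sigma_{\mathbb{M}})$ with $M\models{}$``$\delta$ is a limit of Woodin cardinals'' that Suslin, co-Suslin captures the first-order theory of $(HC,A,\in)$. Using $\mathbb{M}$ in place of the usual $\N^*_x$ machinery, the argument of \rthm{towards wlw} should upgrade clause (2) of its proof: on a club of $\beta<\delta$, $C_\Gamma(M|\beta)$ should satisfy not merely ``$\beta$ is a Woodin cardinal'' but ``$\beta$ is a Woodin cardinal that is a limit of Woodin cardinals''. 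Running a pe-gg-producing iteration relative to $\mathbb{M}$ then produces a backgrounded pe-generic generator $(\P,\Sigma)\in V[g]$ for $\Delta$ whose extender sequence arises from certified extenders in $\mathbb{M}$.

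Applying \rthm{comp of powerset}, for any weak-cutpoint inaccessible $\eta$ of $\P$ one has $\P|(\eta^+)^\P\subseteq {\rm OD}^V_{\P|\eta,X}$ with $X=\pi^{\Sigma}_{\P,\infty}[\P^b]$, and ${\sf{BDL}}$ gives $X\cap {\sf{Ord}}\subseteq\gamma$. Iterating the universality analysis of \cite[Claim 4.1]{ESC} at one of the upgraded $\beta$'s, the wlw structure in $C_\Gamma(M|\beta)$ together with the Woodin cardinals of $M$ lying cofinally in $\delta$ above $\beta$ should yield not merely a superstrong extender on the sequence of $\P$ (as in \rthm{towards wlw}) but a long extender; a countable $\omega_1+1$-iterable hull of $\P$ would then violate ${\sf{NLE}}$. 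The main obstacle is precisely the passage from superstrongness to longness: one must show that the cofinally many background Woodin cardinals of $M$ above $\beta$ force the certified extender extracted from condensation plus self-justifying-system capture to measure genuinely long sequences, rather than producing only successive superstrong extenders whose critical points march up towards $\delta$. Tracking the $A$-coded descriptive data through the hulls $C_\Gamma(M|\beta)$ and through the backgrounded construction that assembles $\P$ is the technical heart of the argument.
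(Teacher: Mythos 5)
Your proposal takes a genuinely different route from the paper, and unfortunately the route has a structural gap that the ``main obstacle'' paragraph at the end does not locate correctly.

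First, two smaller points. The detour through \rthm{towards wlw} is unnecessary: you already have an $A$-wlw background by hypothesis, so re-deriving a Woodin-limit-of-Woodins via \rthm{towards wlw} buys nothing. And your announced ``upgrade'' of clause (2) in that proof --- club-many $\beta$ with $C_\Gamma(M|\beta)\models$ ``$\beta$ is a Woodin limit of Woodins'' --- is unjustified; the club in \rthm{towards wlw} comes from term-capturing a self-justifying system, which only certifies Woodinness of $\beta$, not that $\beta$ is a limit of Woodins of the hull. Finally, your worry about passing from a superstrong extender to a long extender is a false problem: a superstrong extender on the sequence of a countably iterable mouse already has a generator at $j_E(\cp(E))$ and is long in the sense of \rdef{nle}, so the superstrong is itself the contradiction (this is exactly why the paper stops there).

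The real gap is earlier. You apply \rthm{comp of powerset} to a backgrounded generic generator $(\P,\Sigma)\in V[g]$ and get $\P|(\eta^+)^\P\subseteq OD^V_{\P|\eta,X}$. But the universality argument of \cite[Claim 4.1]{ESC} that produces a superstrong must be run \emph{inside the background model} $P$ (what you call $M$), applied to the $L[\vec E]$-construction $\K$ of $V^P_\d$, and what it needs as input is $\powerset(\k)\cap\K\subseteq C_\Gamma(P|\k)$. That is an entirely different statement from the one \rthm{comp of powerset} gives: it concerns $\K$ rather than $\P$, and it asserts membership in the hull $C_\Gamma(P|\k)$, not mere ordinal-definability in $V$ from $X$ and $\P|\eta$. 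Bridging this requires machinery your proposal does not touch: the directed system of $A$-full $\sigma$'s and structures $R_\sigma$ with embeddings $k_{\sigma,\tau}$; the claim (using ${\sf{DLIH}}$ via a genericity iteration of $P$) that the direct limit $\M$ of $\Lambda$-iterates of $\K|\k$ inside $P[h]$ is an initial segment of $\mH_\sigma$; the discontinuity $\sup \pi_U[\nu]<\pi_U(\nu)$ for a $\k$-complete ultrafilter $U$ coherent with $\Sigma$; and the definability computation that recovers $\powerset(\nu)\cap\M^+$ in $R_\tau$ from $k_{\sigma,\tau}\rest\M$ and $k_{\sigma,\tau}(\M)$. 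Without that chain of facts there is no way to place $\powerset(\k)\cap\K$ inside $C_\Gamma(P|\k)$, and hence no way to trigger the universality argument. So the proposal as stated does not yield the superstrong extender, and the admitted obstacle at the end is not where the argument actually breaks.
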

\begin{proof} Towards a contradiction we assume that $V\models  {\sf{DLIH}}(\Delta) \wedge {\sf{BDL}}(\Delta)$. Let $\gg<\Theta$ be such that whenever $(\P, \Sigma)\in V[g]$ is a pe-generic generator for $\Delta$, ${\sf{Ord}}\cap \M_\infty(\P, \Sigma)\subseteq \gg$. Let then $A\subseteq \bR$ be such that $\gg<\Theta^{L(A, \bR)}$, $\Delta\subseteq \utilde{\Sigma^2_1(A)}^{L(A, \bR)}$ and $A$ is Wadge above all sets in $\Delta$. It follows from ${\sf{AD_{\bR}}}$ that $(A, \bR)^\#$ exists.

Working in $V[g]$, let $\mH$ be the union of all $\M_\infty(\P, \Sigma)$ where $(\P, \Sigma)$ is a pe-generic generator for $\Delta$. Note that it follows from ${\sf{DLIH}}(\Delta)$ that $\mH\in V$, and it follows from Coding Lemma, ${\sf{BDL}}(\Delta)$ and our choice of $A$ that $\mH\in L(A, \bR)$.

 Given a countable $\sigma\subseteq \bR$, we say $\sigma$ is \textit{almost} $A$-\textit{full} if there is an elementary $j: (A\cap \sigma, \sigma)^\#\rightarrow (A, \bR)^\#$, and we say $\sigma$ is $A$-\textit{full} if there is an elementary $j: (A\cap \sigma, \sigma)^\#\rightarrow (A, \bR)^\#$ such that $(\gg, \Delta, \mH)\in \rge(j)$. It follows that the set of almost $A$-full $\sigma\subseteq \bR$ contains a club in $V[g]$ (we give an explonation below)\footnote{In $V$, the set of such $\sigma$ has measure one with respect to the supercompactness measure on $\powerset_{\omega_1}(\bR)$.}. For an almost $A$-full $\sigma$, set $R^+_\sigma=L(A\cap\sigma, \sigma)$ and $R'_\sigma=L_{\Theta^{R^+_\sigma}}(A\cap \sigma, \sigma)$.

 If $\sigma$ is almost $A$-full then there is a unique embedding  $j: R'_\sigma\rightarrow L_{\Theta^{L(A, \bR)}}(A, \bR)$ with the property that $j(A\cap \sigma)=A$ and $j$ extends to an embedding $j^+: R^+_\sigma\rightarrow L(A, \bR)$. We let $j_\sigma:R'_\sigma\rightarrow L_{\Theta^{L(A, \bR)}}(A, \bR)$ be this unique embedding. Let $x_0\in \bR$ be such that \\\\
(a) there is a $\Delta^1_1(A, x_0)$ set of reals whose Wadge rank is $w(\Delta)=\sup\{w(Z): Z\in \Delta\}$, and\\
(b) there is a $\Delta^1_1(A, x_0)$ set of reals coding $\mH$.\\\\ 
Given an almost $A$-full $\sigma$ with $x_0\in \sigma$, we have that $\{\gg, \Delta, \mH\}\subseteq \rge(j_\sigma)$. Thus, any almost $A$-full $\sigma$ with $x_0\in \sigma$ is $A$-full, and therefore, the set of $A$-full $\sigma$ contains a club in $V[g]$.

Given an $A$-full $\sigma$, we set $\gg_\sigma=j_{\sigma}^{-1}(\gg)$, $\mH_\sigma=j_\sigma^{-1}(\mH)$ and $\Delta_\sigma=j_{\sigma}^{-1}(\Delta)$. Given $A$-full $\sigma\subseteq \tau$, we let $j_{\sigma, \tau}:R'_\sigma\rightarrow R'_\tau$ be the embedding $j_{\tau}^{-1}\circ j_\sigma$, and also we let $\beta_\sigma$ be the least $\b$ such that 
\begin{itemize}
\item $\gg_\sigma<\Theta^{L_\b(A\cap \sigma, \sigma)}$, $\Delta_\sigma\subseteq (\utilde{\Sigma^2_1(A\cap \sigma)})^{L_\b(A\cap \sigma, \sigma)}$ and $\mH_\sigma\in L_\b(A\cap \sigma, \sigma)$,
\item there is a subset  of $\sigma$ definable over $L_{\b}(A\cap \sigma, \sigma)$.
\end{itemize}
Set $R_\sigma=L_{\b_\sigma}(A\cap \sigma, \sigma)$ and let $k_\sigma=j_\sigma\rest R_\sigma$. If $\sigma\subseteq \tau$ are $A$-full then set $k_{\sigma, \tau}=j_{\sigma, \tau}\rest R_\sigma$. Notice that if $\sigma\subseteq \tau$ are $A$-full and $\sigma\in R'_\tau$ then \begin{itemize}
    \item $k_{\sigma, \tau}\in R'_\tau$ and
    \item $k_{\sigma, \tau}(\mH_\sigma, \gg_\sigma, \Delta_\sigma)=(\mH_\tau, \gg_\tau, \Delta_\tau)$.  
\end{itemize} 
We thus have the following data\footnote{The embedding $j^+_{\sigma, \tau}$ is not unique, but the others are. It is not really important how we pick $j^+_{\sigma, \tau}$, but we can just as well let $j^+_{\sigma, \tau}$ be the minimal embedding, i.e. the one that is determined by $j^+(A\cap \sigma)=A\cap \tau$, $j^+\rest \sigma=id$ and $\rge(j^+)$ contains every $A\cap \tau$-indiscernible.}.
\begin{itemize}
\item $R^+_\sigma=L(A\cap \sigma, \sigma)$, $R'_\sigma=L_{\Theta^{R^+_\sigma}}(A\cap \sigma, \sigma)$ and $R_\sigma=L_{\b_\sigma}(A\cap \sigma, \sigma)$,
\item $j^+_{\sigma, \tau}: R_\sigma^+\rightarrow R_\tau^+$, $j_{\sigma, \tau}=j^+_{\sigma, \tau}\rest R'_{\sigma}$ and $k_{\sigma, \tau}=j_{\sigma, \tau}\rest R_\sigma$.
\end{itemize}
Let $B$ be the set of reals that codes the set $\{\sigma\in \powerset_{\omega_1}(\bR): \sigma$ is $A$-full and $x_0\in \sigma\}$, and let $C$ be the set of reals that codes the set $\{(\sigma, \tau)\in B^2: \sigma\subseteq \tau$ and there is $x\in \tau\cap B$ that codes $\sigma\}$. 

Next we let $\Gamma$ be a good pointclass such that $(A, B, C, x_0, \bR)^\#\in \utilde{\Delta}_{\Gamma}$ and let $U\subseteq \omega\times \bR\times \bR$ be a universal $\Gamma$ set. Also, let $(e, x_1)\in \omega\times \bR$ be such that $U_{e, x_1}=(A, B, C, x_0, \bR)^\#$. Finally let $(P, \d, \vec{G}, \Sigma)$ be a $(U, x_1)$-wlw background. Thus, $(x_0, x_1)\in P$. 

Let $\K$ be the $L[\vec{E}]$-construction of $V_\d^P$ and $\Lambda$ be the strategy of $\K$ induced by $\Sigma$. Because $\Sigma$ is term captured over $P$, we have that $\Lambda$ is also term captured over $P$. Our goal is to show that if $\kappa$ is a strong cardinal of $\K$ then $\powerset(\k)\cap \K$ is contained in $C_\Gamma(P|\k)$. This then will imply, as in \cite[Chapter 3, Chapter 4, Claim 4.1]{ESC}, that $\K$ has a superstrong extender. We will elaborate on this point at the of the current proof.

Suppose $\kappa$ is an inaccessible cardinal of $V_\d^P$. Given a $P$-generic $h\subseteq Coll(\omega, <\k)$ let $\sigma_h=\bR^{P[h]}$. 
\begin{claim}\label{claim 1} Suppose $\k<\d$ is an inaccessible cardinal of $P$. Then  for any $P$-generic $h\subseteq Coll(\omega, <\k)$, $\sigma_h$ is $A$-full.
\end{claim}
\begin{proof} Because $P[h]$ Suslin, co-Suslin captures the theory of $(HC, B, \in)$, we have that $(HC^{P[h]}, B\cap P[h], \in)\prec (HC, B, \in)$. Therefore, for any $x\in P[h]\cap \bR$ there is $y\in B\cap P[h]$ such that if $\sigma$ is the set coded by $y$ then $x\in \sigma$. Thus, we can find $(\sigma_\a: \a<\omega_1^{P[h]})\in P[h]$ such that 
\begin{itemize}
    \item $\sigma_\a$ is $A$-full, 
    \item for $\a<\b<\omega_1^{P[h]}$, $\sigma_\a$ is coded by some real in $B\cap \sigma_\b$,
    \item $\bR^{P[h]}=\cup_{\a<\omega_1^{P[h]}}\sigma_\a$.
\end{itemize}
It then follows that $L(A\cap \bR^{P[h]}, \bR^{P[h]})$ is the direct limit of $(L(A\cap \sigma_\a, \sigma_\a),j_{\sigma_\a, \sigma_\b}: \a<\b<\omega_1^{Ph]})$. It follows that $A\cap \bR^{P[h]}$ is $A$-full. 
\end{proof}

Fix a strong cardinal $\k$ of $\K$ and a $P$-generic $h\subseteq Coll(\omega, <\k)$. Set $\sigma=\sigma_h$. Working in $P[h]$, let $\mathcal{G}$ be the set of $\R$ such that $\R$ is a $\Lambda$-iterate of $\K$ via $\T$ such that $\lh(\T)<\kappa$, $\T$ is based on $\K|\k$ and $\pi^\T$ is defined (i.e, no drop on the main branch). We have that $\mathcal{G}$ is a directed set of iterates of $\K$ and hence, we can let $\M$ be the direct limit of $\mathcal{G}$.

\begin{claim}\label{nu is in hsigma} $\M\insegeq \mH_\sigma$.
\end{claim}
\begin{proof} Working in $V[g]$, we find a $\Sigma$-iteration $j:P\rightarrow N$ of $P$ of length $\omega_1$ such that $j(\k)=\omega_1$ and
$\bR$ is symmetrically generic over $N|\omega_1$.
This can be done by just enumerating $\bR$ in length $\omega_1$ and making the reals generic one by one and using the least normal measure with critical point $\kappa$ to stretch the sequence of Woodins. We now also have that $(j(\K)|\omega_1, \Psi)$ is a generic generator for $\Delta$ where $\Psi$ is the strategy of $j(\K)$ induced by $\Sigma_{N}$ (we verify this claim below). It then follows that $\M_\infty(j(\K)|\omega_1, \Psi)\insegeq \mathcal{H}$. As $\mH=j(\mH_\sigma)$\footnote{Here we use the fact that $\mH$ has a $\Delta^1_1(A, x_0)$ code.} and $\M_\infty(j(\K)|\omega_1, \Psi)=j(\M)$, the claim follows. Thus it remains to prove the following subclaim.

\begin{subclaim} $(j(\K)|\omega_1, \Psi)$ is a generic generator for $\Delta$
\end{subclaim}
\begin{proof} It is easy to verifying that $(j(\K)|\omega_1, \Psi)$ satisfies clauses 1-5 of \rdef{generic generator def}. For example, the club $C$ in clause 3 consist of stages where we used the measure to stretch our sequence of Woodin cardinals. Clause 5 follows from our choice of $\Delta$. We verify clause 6. 

It is enough to verify clause 6 for $A$ which have the form $Code(\Phi)$ where $(\S, \Phi)$ is a pure mouse pair. Let $(z_\a: \a<\omega_1)$ be the generic enumeration of the reals in $V[g]$ and let $(P_\a:\a<\omega_1)$ be the iterates of $P$ that our genericity iteration produces. Let $p_{\a, \b}:P_\a\rightarrow P_\b$ be the iteration embedding according to $\Sigma$. Because we are assuming $\neg {\sf{LEC}}$, we must have that $Code(\Phi)\in \Delta$, and therefore, there is $\a<\omega_1$ and $\b<\omega_1$ such that for some $\chi<p_{0, \b}(\k)$, $z_\a\in P_\b[h]$ where $h\subseteq Coll(\omega, \chi)$ is $P_\b$-generic. It follows that $(P_\b[h], \Sigma_{\P_\b})$ Suslin, co-Suslin capture $Code(\Phi)$. 

We now use universality of backgrounded constructions (see \cite[Chapter 2.3]{HMMSC} or \cite[Chapter 8.4]{SteelCom}). It follows from universality that for some $\tau$ smaller than the largest Woodin cardinal of $P_\b$ (call it $\d_\b$), the $L[\vec{E}]$ construction of $p_{0, \b}(\K)|\tau$ that is done using background extenders with critical points $>\chi$ reaches an iterate of $(\S, \Phi)$. Because $p_{0, \b}(\k)$ is $<\d_\b$-strong in $p_{0, \b}(\K)$, we can find such a $\tau$ below $p_{0, \b}(\k)$.\footnote{It is important that $p_{0, \b}(\k)$ is a $<\d_\b$-strong cardinal in $p_{0, \b}(\K)$ not just in $P_\b$. This is because to reflect $\tau$ below $p_{0, \b}(\k)$ we need to find an extender $E\in \vec{E}^{P_\b}$ with critical point $p_{0, \b}(\k)$ such that $\pi_E(p_{0, \b}(\K))|\tau=p_{0, \b}(\K)|\tau$ and $\lh(E)>\tau$. Because $p_{0, \b}(\k)$ is a $<\d_\b$-strong cardinal in $p_{0, \b}(\K)$, there are many such extenders.} It then follows that for some $\tau'<p_{0, \b}(\k)$, $Code(\Phi)$ is projective in $Code(\Psi_{p_{0, \b}(\K)|\tau'}$.
\end{proof}
\end{proof}

Suppose that $U\in P$ is such that $P\models ``U$ is a normal, $\k$-complete ultrafilter on $\k"$ and $U$ coheres $\Sigma$.  Then\\\\
(1) $\pi_U\rest \M$ is the iteration embedding according to $\pi_U(\Lambda\rest P)=\Phi\rest Ult(P, U)$ where $\Phi$ is the strategy of $\pi_U(\K)$ induced by $\Sigma_{Ult(P, U)}$.\\\\
Fix now $U$ as above and set $i=\pi_U$.  Let $h'\subseteq Coll(\omega, <i(\k))$ be generic over $Ult(P, U)$ such that $h=h'\cap Coll(\omega, <\k)$. Set $\tau=\bR^{Ult(P, U)[h']}$. It follows from the proof of \rcl{claim 1} that $\tau$ is $A$-full. Notice that $i$ lifts to $i^+:P[h]\rightarrow Ult(P, U)[h']$. We now have that $i^+\rest R_\sigma=k_{\sigma, \tau}$. Let $\nu={\sf{Ord}}\cap \M$.

\begin{claim} $\sup i[\nu]<i(\nu)$.
\end{claim}
\begin{proof} We have that $i(\M)$ is the direct limit of all countable $\Phi$-iterates of $i(\K)|i(\k)$ in $Ult(P, U)[h]$, where $\Phi$ is the strategy of $i(\K)$ induced by $\Sigma_{Ult(P, U)}$. Let $m:\K|\k\rightarrow \M$ be the iteration  embedding according to $\Lambda$. It follows that $\nu=\sup(m[\k])$ and hence, $i(\nu)=\sup i(m)[i(\k)]$. Because $i$ is discontinuous  at $\k$, we have that $i(\nu)>\sup \{ i(m(\a)): \a<\k\}$.
\end{proof}

Let $\T$ be the $\K|\k$-to-$\M$ tree according to $\Lambda$ and let $\M^+$ be the last model of $\T$ when we apply it to $i(\K)|i(\k)$\footnote{Notice that (1) implies that this makes sense.}. We now have that\\\\
(2.1) $i(\M)$ is a $\Phi_{\M^+}$-iterate of $\M^+$ as $\T$ is an iteration of $i(\K)|i(\k)$ that can be used in the direct limit converging to $i(\M)$, and\\
(2.2) If $n:\M^+\rightarrow i(\M)$ is the iteration embedding then $n\rest \M=k_{\sigma, \tau}$.\\\\
Because $k_{\sigma, \tau}\rest \M\in R_\tau$\footnote{This is a general fact about determinacy models. $k_{\sigma, \tau}\rest \M\in R^+_\tau$ because $k_{\sigma, \tau}:R_\sigma\rightarrow R_\tau$ is in $R^+_\tau$. Notice then that because $R_\tau$ and $R^+_\tau$ have the same reals, $k_{\sigma, \tau}\rest \M\in R_\tau$ (in fact if for some $\iota<\Theta^{R_\sigma}$, $a\in R_\tau^+$ is a countable subset of $L_\iota(A\cap \tau, \tau)$ then $a\in R_\tau$).} and $k_{\sigma, \tau}\rest \M$ is discontinuous at $\nu$,\footnote{It follows from \rcl{nu is in hsigma} that $\nu\leq {\sf{Ord}}\cap \mH_\sigma$. Since $H_\sigma\in \dom(k_{\sigma, \tau})$, $\nu\in \dom(k_{\sigma, \tau})$.} we can define $\powerset(\nu)\cap \M^+$ in $R_\tau$ from $k_{\sigma, \tau}\rest \M$ and $k_{\sigma, \tau}(\M)$. Indeed, the next claim verifies this.

\begin{claim}\label{computing powerset from k} $\powerset(\nu)\cap \M^+$ is definable in $R_\tau$ from $k_{\sigma, \tau}\rest \M$ and $j_{\sigma, \tau}(\M)$
\end{claim}
\begin{proof}
We have that $\M|\nu$ is just the transitive collapse of $k_{\sigma, \tau}[\M]$. Let $\zeta=\sup k_{\sigma, \tau}[\nu]$. Notice that $\nu$ is an inaccessible cardinal in $\M$ and $\zeta$ is an inaccessible cardinal in $k_{\sigma, \tau}(\M)$. Suppose now $E\subseteq \nu$. We then have that the following are equivalent:\\\\
(c) $E\in \M^+$.\\
(d)  For every $\iota<\nu$, $E\cap \iota \in \M$ and there is $\Q\insegeq k_{\sigma, \tau}(\M)$ and $F\in \Q$ such that $\rho_1(\Q)=\zeta$ and letting\begin{itemize}
\item $p$ be the first standard parameter of $\Q$, 
\item for $\a<\zeta$, $\Q_\a$ be the transitive collapse of the $\Sigma_1$ hull of $\Q$ with parameters from $\{p\}\cup \a$ and \item $q_\a: \Q_\a\rightarrow \Q$ be the inverse of the transitive collapse,
\end{itemize}
for some $\a_0<\nu$, for all $\a\in k_{\sigma, \tau}[\nu]$ such that $\a\geq k_{\sigma, \tau}(\a_0)$, $\Q_\a\in \rge(k_{\sigma, \tau})$, $q_\a^{-1}(F)\in \rge(k_{\sigma, \tau})$ and $k_{\sigma, \tau}^{-1}(q_\a^{-1}(F))\cap k_{\sigma, \tau}^{-1}(\a)=E\cap k_{\sigma, \tau}^{-1}(\a)$.\\\\
It is not hard to see that if $E\in \M^+$ then there is a pair $(\Q, F)$ with the above properties: $F=i^+(E)\cap \zeta$ and $\Q$ is the least initial segment of $k_{\sigma, \tau}(\M)$ with the property that $\rho_1(\Q)=\zeta$ and $F\in \Q$. Assume now that (d) holds. We can find some $\N\insegeq \M^+$ with the property that letting $l: \M^+\rightarrow i(\M)$ be the iteration embedding according to $\Phi_{\M^+}$,
\begin{itemize}
\item $\rho_1(\N)=\nu$,
\item letting $r$ be the first standard parameter of $\N$ and $\R$ be the transitive collapse of the $\Sigma_1$-hull of $l(\N)$ with parameters from $\{l(r)\}\cup \zeta$, $\Q\insegeq \R$.
\end{itemize}
We can find such an $\N$ because $i(\M)|\zeta$ is a $\Phi_\M$-iterate of $\M^+|\nu=\M$ (see (2.2)). For $\a<\nu$, let $\N_\a$ be the $\Sigma_1$-hull of $\N$ with parameters from $\{ r\}\cup \a$ and let $n_\a:\N_\a\rightarrow \N$ be the inverse of the transitive collapse. Pick $\a_0$ witnessing (d). We then have that for some $\a_1\in [\a_0, \nu)$ for all $\a\in [\a_1, \nu)$\footnote{We just need to find $\a_1$ large enough such that $\Q$ is definable over $\R$ from parameters in $\{p_1(\R)\}\cup k_{\sigma, \tau}(\a_1)$ where $p_1(\R)$ is the first  standard parameter of $\R$.}, $\Q_\a\insegeq k_{\sigma, \tau}(\N_\a)$. It now follows that we have some $E'\in \N$ such that for $\a\in [\a_0, \nu)$, $n_\a^{-1}(E')=k_{\sigma, \tau}^{-1}(q_\a^{-1}(F))$. It then follows that for every $\a\in [\a_1, \nu)$, $E'\cap \a=E\cap \a$. Hence, $E'=E$.
\end{proof}

We now have that in $R^+_\tau$, 
\begin{itemize}
    \item $k_{\sigma, \tau}(\M)$ is ordinal definable from $A\cap \tau$, $x_0$ and $\sigma$\footnote{Recall that $\mH$ is $\Delta^1_1(A, x_0)$}, 
    \item $k_{\sigma, \tau}\rest \M$ is ordinal definable from $\sigma$ and $A\cap \tau$.
\end{itemize} 
It follows that in $R^+_\tau$, $\powerset(\nu)\cap \M^+$ is ordinal definable from $A\cap \tau$, $x_0$ and $\sigma$. It then follows that if $\sigma'$ is $A$-full, there is $y\in B\cap \sigma'$ that codes $\sigma$ and there is $y'\in B\cap \tau$ that codes $\sigma'$ then\\\\
(A) in $R^+_{\sigma'}$, $\powerset(\nu)\cap \M^+$ is ordinal definable from $A\cap \sigma'$, $x_0$ and $\sigma$.\footnote{Fixing such a $\sigma'$, we have embeddings $k_{\sigma, \sigma'}: R_\sigma\rightarrow R_{\sigma'}$ and $k_{\sigma', \tau}: R_{\sigma'}\rightarrow R_\tau$. Because $k_{\sigma, \tau}=k_{\sigma', \tau}\circ k_{\sigma, \sigma'}$ and $k_{\sigma, \tau}\rest \M\in R_\tau$, we have that $k_{\sigma, \sigma'}\rest \M\in R_{\sigma'}$ and $k_{\sigma', \tau}(\k_{\sigma, \sigma'}\rest \M)=\k_{\sigma, \tau}\rest \M$. Because $k_{\sigma', \tau}(\sigma)=\sigma$, $k_{\sigma', \tau}(x_0)=x_0$, $k_{\sigma', \tau}(A\cap \sigma')=A\cap \tau$ and $k_{\sigma', \tau}^{-1}(\powerset(\nu)\cap \M^+)=\powerset(\nu)\cap \M^+$ (as $\powerset(\nu)\cap \M^+$ countable in $R_\tau$), (A) follows.}\\\\
Notice now because the set $C$ is in $\utilde{\Delta}_{\Gamma}$ and because of our choice of $x_1$ we have that\\\\
(B) $\powerset(\nu)\cap \M^+\in C_\Gamma(\sigma)$.\footnote{Let $T$ be the tree projecting to $U$ that comes from a $\Gamma$-scale. We then have that $C_\Gamma(\sigma)=\bR\cap L(T, \sigma)$. For example, see Apendix A of \cite{HMMSC}. Notice now that if $t\subseteq Coll(\omega, \sigma)$ is $L(T, \sigma)$-generic then by our choice of $\Gamma$, there is $\sigma'\in L(T, \sigma)[t]$ such that $\sigma'$ is $A$-full and there is $y\in B\cap \sigma'$ that codes $\sigma$. We assume now that $t\in Ult(P, U)[h']$. It then follows that in $L(T, \sigma)[t]$, whenever $\sigma'$ has the above properties, $\powerset(\nu)\cap \M^+$ is ordinal definable in $R_{\sigma'}$ from $A\cap \sigma'$, $x_0$ and $\sigma$. It is now not hard to turn what we have into a definition of $\powerset(\nu)\cap \M^+$ over $L(T, \sigma)$.}\\\\
We now have that $\powerset(\k)\cap \K\in C_{\Gamma}(P|\k, m)[h]$ as $C_\Gamma(\sigma)\subseteq C_{\Gamma}(P|\k, m)[h]$\footnote{ $m:\K\rightarrow \M$ is the iteration  embedding according to $\Lambda$. We can compute $\powerset(\k)\cap \K$ from $m$ and $\powerset(\nu)\cap \M^+$. This is similar to \rcl{computing powerset from k}.}. However, our calculations are independent of $h$, implying that in fact, $\powerset(\k)\cap \K\in C_{\Gamma}(P|\k, m)$.

Everything we have done above depended on $\k$. So we let $(\M_\k, m_\k)$ be $(\M, m)$ defined above. We thus have that $\K|(\k^+)^\K\in C_\Gamma(P|\k, m_\k)$.

To get a superstrong extender in $\K$, and hence a contradiction, we follow the general set up of the universality argument (see \cite[Chapter 2.3]{HMMSC}). Let $\Q$ be the direct limit of all $<\d$-iterates of $\K$ that are in $P$ and let $q: \K\rightarrow \Q$ be the iteration embedding according to $\Lambda$. Notice that $(\Q, q)$ is definable over $P|\d$. Also, because of our choice of $(P, \Sigma, \Gamma)$, $C_\Gamma(P|\d)\in P$ and $(\K, \Q, q)\in C_\Gamma(P|\d)$. 

Let now $\k$ be a strong cardinal of $\K$ and $\pi: N\rightarrow P|(\d^+)^\P$ be an elementary embedding such that $\cp(\pi)=\k$, and $(\Q, q)\in \rge(\pi)$. It follows that $\pi^{-1}(\Q, q)=(\M_\k, m_\k)$. Using condensation of sjs (see \cite[Lemma 2.19]{HMMSC}), we can find such a pair $(\pi, N)$ such that $C_\Gamma(P|\k, m_\k)\in N$. It then follows that $\K|(\k^+)^\K\in N$. 

Working in $P$, we now find a continuous chain $(X_\a: \a<\d)$ of elemenetary substructures of $P|(\d^+)^\P$ such that 
\begin{enumerate}
\item $X_\a$ has cardinality $<\d$,
\item for every $\a<\d$, $\k_\a=_{def}X_\a\cap \d\in \d$,
\item letting $N_\a$ be the transitive collapse of $X_\a$ and $\pi_\a: N_\a\rightarrow X_\a$ be the inverse of the transitive collapse, $C_\Gamma(P|\k_\a)\in N_\a$. 
\end{enumerate}
We can then find a $\k$ which witnesses Woodiness of $\d$ for $((X_\a: \a<\d), \K)$. For this $\k$, we have that $\K|(\k^+)^\K\in C_\Gamma(P|\k)\in N_\k$\footnote{For such $\k$, $m\in C_\Gamma(P|\k)$.}. We cam apply the argument used in \cite[Chapter 3, Chapter 4, Claim 4.1]{ESC} or the arguments used in \cite[Chapter 2.3]{HMMSC} to argue that $\k$ is a superstrong cardinal in $\K$.
\end{proof}

\begin{remark}
    \normalfont We remark that as it was the case with several other theorems proven in this paper, the corresponding version of \rthm{getting superstrongs} can also be established for hod pairs assuming $\neg {\sf{HPC}}$.
\end{remark}

\section{Concluding remarks}

The question that the reader must be asking is whether ${\sf{DLIH}}$ and ${\sf{BDL}}$ are at all plausible. Here we give some justifications. Recall Notation \ref{strong cardinals}.

\begin{definition}
    Suppose $F$ is as in \rthm{n*x}. We say $F$ is \textbf{pe-bad} if for a cone of $x\in \bR$, letting $\M$ be the $L[\vec{E}]$-construction of $\N^*_x|\d_x$, 
    $\sup(W(\M))=\d_x$. We define \textbf{hp-bad} similarly.  
\end{definition}

\begin{definition}
    Suppose $F$ is as in \rthm{n*x}. We let $Base(F)$ be the pointclass $\Gamma^*$ of \rthm{n*x}. We say $F$ is \textbf{projective-like} if for some set $C\subseteq \bR$ with $C\in\utilde{\Delta}_{Base(F)}$, $n\in [1, \omega)$ and a real $x$, $\Gamma^*=\Sigma^1_{2n}(C, x)$.
\end{definition}

\begin{conjecture}\label{important conjecture}
    Assume ${\sf{AD^+}}$. Suppose $F$ is as in \rthm{n*x} and is projective-like. Then $F$ is neither pe-bad nor hp-bad.
\end{conjecture}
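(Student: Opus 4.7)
The plan is to derive a contradiction from the assumption that $F$ is pe-bad; the hp-bad case is parallel, modulo the technical gap discussed in \rrem{gap 0}. The idea is that pe-badness extracts an $\omega_1$-iterable mouse $\N$ with a Woodin cardinal that is a limit of Woodin cardinals, whose derived model has pointclass complexity that cannot fit inside any projective-like $\utilde{\Delta}_{\Gamma^*}$.

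First, I would fix a real $x$ on the pe-bad cone and let $\M$ be the $L[\vec{E}]$-construction of $\N^*_x|\d_x$. Using $\sup(W(\M))=\d_x$ together with an $S$-construction of the type carried out in the proof of \rlem{never breaks down} (and used there to translate the construction back into a mouse with the prescribed Woodin structure), one produces an $\omega_1$-iterable lbr mouse $\N$ extending $\M$ whose height is a Woodin cardinal that is a limit of Woodin cardinals. Its strategy $\Psi$ is a restriction of $\Sigma_x$; since $(\N^*_x,\d_x,\vec{G},\Sigma_x)$ is a self-capturing background at $\Gamma^*$, we obtain $Code(\Psi)\in\utilde{\Delta}_{\Gamma^*}$. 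Now apply \rthm{generic generator for dm} to $(\N,\Psi)$: after forcing with $Coll(\omega_1,\bR)$, there is an hp-generic generator $(\P,\Lambda)$ for $\Delta':=\powerset(\bR)\cap DM(\N,\Psi)$, where $DM(\N,\Psi)$ is the derived model. Tracing through the construction in the proof of \rthm{generic generator for dm}, each tail strategy $\Lambda_{\P|\alpha}$ is read off from $\Psi$-iterates and so is projective in $Code(\Psi)$; by clause~6 of \rdef{generic generator def}, every $A\in\Delta'$ then lies in the projective closure of $Code(\Psi)$.

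The projective-like hypothesis furnishes $C\in\utilde{\Delta}_{\Gamma^*}$, $n\in[1,\omega)$, and $x_1\in\bR$ with $\Gamma^*=\Sigma^1_{2n}(C,x_1)$. Combined with $Code(\Psi)\in\utilde{\Delta}_{\Gamma^*}$, this Wadge-bounds the projective closure of $Code(\Psi)$ by a fixed projective-like level, and hence so is $\Delta'$. On the other hand, $DM(\N,\Psi)$ is the derived model of a mouse with a Woodin cardinal that is a limit of Woodin cardinals, and hence satisfies ${\sf{AD_\bR}}+\Theta$ is regular; consequently $\Theta^{DM(\N,\Psi)}$ occurs at a limit index of the Solovay sequence, and $\Delta'=\powerset(\bR)^{DM(\N,\Psi)}$ exceeds every projective-in-a-real Wadge rank computed over $C$. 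This contradicts the Wadge-boundedness of $\Delta'$.

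The main obstacle is making the final complexity inequality fully rigorous: one needs on the one hand to verify that iterating a projective-like strategy by the process of \rthm{generic generator for dm} cannot inflate pointclass complexity beyond what the projective-like background permits, and on the other to verify that $\Theta^{DM(\N,\Psi)}$ genuinely exceeds every projective-in-a-real rank over the fixed parameter $C$. The cleanest approach will likely invoke the comparison theorem for generic generators promised in \cite{CWLD}, together with \cite[Theorem 1.9]{ADRUB} and the sjs machinery of \cite[Chapter 2.4]{HMMSC}; for the hp-bad case, one must additionally navigate the gap flagged in \rrem{gap 0}, either by working with backgrounded constructions of \cite[Definition 3.2]{FarmBC} or by the translation argument sketched there.
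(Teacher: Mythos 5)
This statement is \emph{Conjecture}~\ref{important conjecture} in the paper: the author does not prove it, and in fact it is stated precisely because it is open. The only evidence the paper offers is the heuristic that $\sf{HPC}$ implies the conjecture ``for many projective $F$'' — a conditional partial result resting on a hypothesis ($\sf{HPC}$) whose resolution is the main goal of the whole program. Your proposal is therefore attempting to close an explicitly open problem, and you should treat it that way.

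That said, your strategy is broadly aligned with the author's stated intuition: pe-badness should yield strategies of Woodin segments at cofinally many Wadge levels, while projective-likeness of $\Gamma^*$ only permits the finitely many ``jumps'' between $C$ and $\Sigma^1_{2n}(C,x_1)$. Your route through the derived model and \rthm{generic generator for dm} is an indirect version of this, and it has a concrete gap at the crucial point. You want to contradict the Wadge-boundedness of $\Delta'=\powerset(\bR)^{DM(\N,\Psi)}$ by appealing to $\Theta^{DM}$ sitting at a limit of the Solovay sequence. The only way that bites is roughly: if $C\in DM$, then the sequence $\bigl(w(\utilde{\Sigma}^1_m(C))\bigr)_{m<\omega}$ is $OD_C$ in $DM$ and cofinal in $\Theta^{DM}$, giving $\cf^{DM}(\Theta^{DM})=\omega$, contradicting $\Theta$ regular in $DM$. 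But nothing you have arranged guarantees $C\in DM$; indeed $Code(\Psi)\notin DM$ automatically (the strategy of the mouse is never in its own derived model), and $C$ sits at a nearby Wadge level. If $w(C)\geq\Theta^{DM}$, then $\Delta'\subseteq\utilde{\Delta}^1_1(C)$ already, and $\Delta'$ being bounded below the projective closure of $C$ is perfectly consistent with $DM\models{\sf{AD_\bR}}+\Theta$ regular — there is no tension. So the contradiction does not materialize from the facts you invoke.

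You flag the complexity inequality as the obstacle, and you are right to, but the issue is not a technical detail to be filled in later: without localizing a set witnessing the projective-likeness of $\Gamma^*$ \emph{inside} $DM(\N,\Psi)$ (or running the more direct ``count the Wadge jumps between consecutive Woodin segments of $\M$'' argument, which avoids the derived-model detour and this pitfall entirely, but needs its own justification that $Code(\Sigma_x)\in\utilde{\Delta}_{\Gamma^*}$ and that each Woodin segment produces a genuine jump), the proof does not close. In short: the argument as proposed would not constitute a proof of \rcon{important conjecture}, and the paper itself leaves the conjecture open.
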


It is not hard to show that ${\sf{HPC}}$ implies \rcon{important conjecture} for many projective $F$. Indeed, suppose $(\P, \Sigma)$ is a hod pair and $\Gamma^*$ is a good pointclass projective in $\Sigma$. Fix $F$ such that $Base(F)=\Gamma^*$ and suppose $x\in \dom(F)$ is such that $Code(\Sigma)$ is Suslin, co-Suslin captured by $(\N^*_x, \d_x, \Sigma_x)$. Let $\mH$ be the hod pair construction of $\N^*_x$ and let $\Lambda$ be the strategy of $\mH$ induced by $\Sigma_x$. We then have that for some $\xi$, $Code(\Sigma)$ is $\utilde{\Delta}^1_1(Code(\Lambda_{\mH|\xi}))$\footnote{This follows from the comparison argument of \cite{SteelCom}.}. It then follows that $\mH$ can have at most finitely many Woodin cardinals above $\xi$ as otherwise $\Gamma^*$ cannot be projective in $\Sigma$. 

Next we conjecture the following:
\begin{conjecture}\label{another con}
    Assume ${\sf{AD_{\bR}}}$. Suppose that for every set $B\subseteq \bR$ there is $F$ as in \rthm{n*x} such that $F$ is not pe-bad and $B\in \utilde{\Delta}_{Base(F)}$. 
    \begin{enumerate}
        \item (Pure Extender Version) Assume that ${\sf{LEC}}$ fails and set $\Delta=\Delta_{gen, pe}$. Then both ${\sf{DLIH}}(\Delta)$ and ${\sf{BDL}}(\Delta)$ hold.
        \item (Hod Pair Version) Assume that ${\sf{HPC}}$ fails and set $\Delta=\Delta_{gen, hp}$. Then both ${\sf{DLIH}}(\Delta)$ and ${\sf{BDL}}(\Delta)$ hold.
    \end{enumerate}
\end{conjecture}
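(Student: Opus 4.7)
The plan is to exploit the non-pe-bad (respectively non-hp-bad) hypothesis to obtain a uniform ``stopping ordinal'' for all generic generators associated to $\Delta$, and then to push a comparison argument through using those generators' inherited background structure. Both parts follow the same template, with the pure extender case (1) established first and the hod pair case (2) handled analogously once the comparison theory of hp-generic generators is developed (as is promised in the forthcoming \cite{CWLD}) and once the gap noted in \rrem{gap 0} is addressed.

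For ${\sf{BDL}}(\Delta)$, I would first fix $B\subseteq\bR$ that Wadge-codes $\Delta$ together with a canonical counterexample to ${\sf{LEC}}$, then apply the hypothesis to pick $F$ as in \rthm{n*x} with $B\in\utilde{\Delta}_{Base(F)}$ and $F$ not pe-bad. For $x$ in the cone of $F$, letting $\M_x$ be the $L[\vec{E}]$-construction of $\N^*_x|\d_x$, non-pe-badness gives $\nu_x=\sup(W(\M_x))<\d_x$ for a cone of $x$, and I set $\M^b_x=\M_x|(\nu_x^+)^{\M_x}$. The key is to show that for any pe-generic generator $(\P,\Sigma)$ for $\Delta$, the direct limit $\M_\infty(\P,\Sigma)$ embeds into the direct limit of the system of iterates of $\M^b_x$ under the maps induced by $\Sigma_x$. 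This system has ordinal rank bounded by the Wadge rank of $\Sigma_x$, which is strictly below $\Theta^V$ uniformly in $x$. The requisite embedding should be extracted by running the fully backgrounded gg-producing iteration of \rthm{existence of gg} on $\N^*_x$ and identifying its output with $(\P,\Sigma)$ up to an internal comparison, invoking \rthm{comp of powerset} to control the cofinal behavior of the iteration.

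For ${\sf{DLIH}}(\Delta)$, given two generic generators $(\P_1,\Sigma_1)$ and $(\P_2,\Sigma_2)$ in $V[g]$, I would background them simultaneously over a common triple $(\N^*_x,\d_x,\Sigma_x)$ coming from an $F$ (not pe-bad) whose $Base(F)$ is Wadge-cofinal enough to code both strategies. The decisive step is then a comparison theorem for backgrounded generic generators over the same $\N^*_x$, asserting that their direct limits form an initial-segment chain. The non-pe-bad condition provides termination of this comparison by capping the Woodin spectrum of $\M_x$, so no disagreement can propagate arbitrarily high. Once the comparison yields compatibility of the two backgrounded generators at the level of their $\Sigma_x$-induced iterates, the full direct limits inherit the same compatibility.

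The principal obstacle is precisely the comparison theory for uncountable generic generators: the classical comparison machinery uses countable fragments of strategies, whereas here one must work with generators whose extender-and-branch data depend on a $V$-generic collapse of $\bR$. My expectation is that one can carry out the comparison inside $V[g]$ by taking countable hulls, then use the homogeneity of $Coll(\omega_1,\bR)$ and the ${\sf{AD^+}}$-definability of $\Delta$ to push the conclusion back to $V$; this is the content that \cite{CWLD} is meant to supply. A secondary obstacle, specific to (2), is the gap flagged in \rrem{gap 0}: one must first verify that $\d_x$ remains Woodin in the hod pair construction of $\N^*_x$, possibly by translating between the backgrounding conventions of \cite[Definition 3.1.1]{SteelCom} and \cite[Definition 3.2]{FarmBC} as outlined there. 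Resolving these two points appears to be the substantive work; once in hand, the uniform bound ${\sf{BDL}}$ and the comparability ${\sf{DLIH}}$ should drop out together from the non-badness of $F$.
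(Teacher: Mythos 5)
This statement is a \emph{conjecture} in the paper, not a theorem: the paper offers no proof of it, only the remark immediately following that it can be established under the stronger hypothesis ${\sf{NWLW}}$ via a comparison theorem for generic generators, with the details deferred to the forthcoming \cite{CWLD}. So there is no paper proof against which to measure your attempt. What can be said is whether your sketch is a plausible route and whether it has visible gaps beyond the one the paper itself flags.

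You correctly identify the central obstruction — a comparison theory for uncountable generic generators without assuming ${\sf{NWLW}}$ — and you correctly note the secondary issue for the hod-pair version coming from \rrem{gap 0}. On those two points your assessment matches the paper's own. But there is an additional gap in your ${\sf{DLIH}}$ sketch that you gloss over. You propose to take two arbitrary generic generators $(\P_1,\Sigma_1)$ and $(\P_2,\Sigma_2)$ in $V[g]$ and ``background them simultaneously over a common triple $(\N^*_x,\d_x,\Sigma_x)$.'' However, ${\sf{DLIH}}(\Delta)$ quantifies over \emph{all} generic generators for $\Delta$ in the sense of \rdef{generic generator def}, and there is no reason a given generic generator should be realizable as a backgrounded one (\rdef{backgrounded g.g.}); the backgrounded ones are a special subclass produced by \rthm{existence of gg}. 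Showing that an arbitrary generic generator has the same direct limit as some backgrounded one over a fixed $\N^*_x$ is itself a form of the comparison theorem you are trying to avoid proving, so this step is circular as stated. The paper's intended route, as described in the discussion after the conjecture, is structurally different: prove a direct comparison theorem asserting that any two generic generators with bounded Woodin spectrum have a common normal iterate $(\R,\Psi)$ with $i(\omega_1)=j(\omega_1)=\omega_1$, and then deduce both ${\sf{DLIH}}$ and ${\sf{BDL}}$ from that comparison plus the argument of \cite[Theorem 3.8]{CCM} — rather than routing ${\sf{BDL}}$ through an embedding of $\M_\infty(\P,\Sigma)$ into a direct limit of iterates of $\M^b_x$, which is not discussed in the paper and would itself need the same comparison machinery to verify.
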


The main motivation behind \rcon{another con} is the fact that we are able to prove it assuming ${\sf{NWLW}}$.  Indeed, assuming ${\sf{NWLW}}$, we are able to establish a comparison theorem for generic generators. Namely, if $(\P, \Sigma)$ and $(\Q, \Lambda)$ are generic generators for $\Delta$ such that $\sup(W(\P)\cup W(\Q))<\omega_1$ then there is a normal iterate $(\R, \Psi)$ of $(\P, \Sigma)$ and $(\Q, \Lambda)$ such that letting $i: L[\P]\rightarrow L[\R]$ and $j:L[\Q]\rightarrow L[\R]$ be the iteration embeddings, $i(\omega_1)=\omega_1=j(\omega_1)$. ${\sf{DLIH}}(\Delta)$ and ${\sf{BDL}}(\Delta)$ now follow from this comparison theorem and the proof of \cite[Theorem 3.8]{CCM}. All of this will appear in \cite{CWLD}. 
To prove \rcon{another con}, we need to prove a comparison theorem for generic generators without assuming ${\sf{NWLW}}$.

Our last remark is then to prove ${\sf{LEC}}$ or ${\sf{HPC}}$ it is enough to establish \rcon{important conjecture} and the following conjecture.

\begin{conjecture}
    Suppose $V\models {\sf{AD_{\mathbb{R}}}}+{\sf{NLE}}+V=L(\powerset(\bR))$. Suppose further that 
    \begin{itemize}
        \item $\Delta\subseteq \powerset(\bR)$,
        \item $g\subseteq Coll(\omega_1, \bR)$,
        \item in $V[g]$, $(\P, \Sigma)$ and $(\Q, \Lambda)$ are two generic generators for $\Delta$ of the same type\footnote{Either both are pure extender pairs or both are hod pairs.} such that $\sup(W(\P)\cup W(\Q))<\omega_1$.
    \end{itemize}
    Then, in $V[g]$, there is a common iterate $(\R, \Psi)$ of $(\P, \Sigma)$ and $(\Q, \Lambda)$ such that letting $i: L[\P]\rightarrow L[\R]$ and $j:L[\Q]\rightarrow L[\R]$ be the iteration embeddings, $i(\omega_1)=\omega_1=j(\omega_1)$
\end{conjecture}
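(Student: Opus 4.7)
The plan is to perform a staged coiteration of $(\P,\Sigma)$ and $(\Q,\Lambda)$ in $V[g]$, exploiting the hypothesis $\sup(W(\P)\cup W(\Q))<\omega_1$ to reduce the problem to a coiteration ``above the Woodin cardinals,'' where strategy behavior is simplest.

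First, I would align the bottom parts. Fix $\nu_0<\omega_1$ larger than $\sup(W(\P)\cup W(\Q))$ and chosen so that $\nu_0$ is an inaccessible weak cutpoint of both $\P$ and $\Q$. By clause (4) of \rdef{generic generator def}, $\Sigma_{\P|\nu_0^+}$ and $\Lambda_{\Q|\nu_0^+}$ lie in $V$ and are the unique extensions in $V[g]$ of their $V$-restrictions. Coiterate $\P|\nu_0^+$ against $\Q|\nu_0^+$ in $V$ by least disagreement. This is a coiteration of countable premice; by the standard comparison theorem, using ${\sf{NLE}}$ for universality (\cite[Chapter 8.1]{SteelCom}), it terminates successfully. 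Copying the resulting trees up to $\P$ and $\Q$ via $\Sigma$ and $\Lambda$ produces iterates $(\P_0,\Sigma_0)$ and $(\Q_0,\Lambda_0)$ with $\P_0|\nu=\Q_0|\nu$ for some common $\nu<\omega_1$ above all Woodin cardinals on both sides. Throughout I use the coherence between the internal strategy (acting on $L[\P]$) and the external $V[g]$-strategy to certify that the lifted trees remain according to $\Sigma$ and $\Lambda$.

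Next, I would run the main coiteration above $\nu$. Simultaneously construct iteration trees $\T$ on $\P_0$ and $\U$ on $\Q_0$ according to $\Sigma_{\P_0}$ and $\Lambda_{\Q_0}$, using only extenders with critical point $\geq\nu$ and iterating at the least disagreement. For each countable stage $\alpha<\omega_1$, the trees $\T\rest\alpha$ and $\U\rest\alpha$ are on countable initial segments, so by clause (4) of \rdef{generic generator def} they live in $V$, and every branch taken is the unique $V[g]$-branch. Because $\nu>\sup(W(\P_0)\cup W(\Q_0))$, no Woodin cardinal is ever overlapped in the coiteration, so limit nodes should admit $Q$-structure analysis, yielding branch uniqueness in the expected range.

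The central obstacle is to show that this coiteration terminates at some stage $\alpha^{*}<\omega_1$, for then all extenders used have length $<\omega_1$, the iteration embeddings $i:L[\P]\to L[\R]$ and $j:L[\Q]\to L[\R]$ are continuous at $\omega_1$, and $i(\omega_1)=j(\omega_1)=\omega_1$ follows. Under ${\sf{NWLW}}$ the author reports this termination is achievable: each limit node, being below the level of a Woodin-limit-of-Woodins, admits a genuine $Q$-structure, so branches are unique and a reflection to a countable hull yields a failure of comparison between countable mice, contradicting ${\sf{NLE}}$. Without ${\sf{NWLW}}$, however, one can face limit nodes at which no $Q$-structure exists (the model of the putative branch itself has a Woodin-limit-of-Woodins), so $\Sigma$ and $\Lambda$ may pick incompatible branches, and the two sides can drift apart in a way no countable reflection detects. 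Overcoming this requires the new comparison theory the author promises in \cite{CWLD}, likely proceeding by showing that the generic generator hypothesis (generation of the same $\Delta$, together with clauses (5)--(6) of \rdef{generic generator def}) forces $\Sigma$ and $\Lambda$ to agree on the relevant branches via a ``direct limit'' identification of branch choices, modulo a further iteration to absorb backgrounded branch disagreements. Once termination at some $\alpha^{*}<\omega_1$ is secured, both the common iterate $(\R,\Psi)$ and the preservation of $\omega_1$ follow immediately from the bound on extender lengths.
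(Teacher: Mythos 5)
This statement is a \emph{conjecture} in the paper, not a theorem: the paper states it in the ``Concluding remarks'' and has no proof of it. Indeed, immediately before stating it the author explains that the comparison theorem for generic generators \emph{is} available under the additional hypothesis ${\sf{NWLW}}$ (to appear in \cite{CWLD}), and that to prove this conjecture ``we need to prove a comparison theorem for generic generators without assuming ${\sf{NWLW}}$.'' So there is no proof in the paper to compare your attempt against.

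Your proposal is therefore correctly read as an outline rather than a proof, and you are candid about this: you identify termination of the above-$\nu$ coiteration at some countable stage $\alpha^*<\omega_1$ as the crux, note that under ${\sf{NWLW}}$ each limit node admits a $Q$-structure so branches are unique and countable reflection contradicts ${\sf{NLE}}$, and then explicitly flag that without ${\sf{NWLW}}$ one may hit limit nodes with no $Q$-structure and hence potentially divergent branch choices by $\Sigma$ and $\Lambda$. That is precisely the obstruction the paper itself names, and deferring its resolution to ``the new comparison theory'' from \cite{CWLD} is exactly what the author does. In short: your diagnosis of the gap is accurate, but the gap remains open, so the proposal does not constitute a proof of the conjecture --- and there is no argument in the paper for it to be checked against.

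A smaller caution about your first stage: you choose $\nu_0<\omega_1$ to be an inaccessible weak cutpoint of both $\P$ and $\Q$ above $\sup(W(\P)\cup W(\Q))$, but if (say) $\P$ has a strong cardinal $\kappa$ above its Woodin cardinals, then above $\kappa$ every ordinal is overlapped by extenders with critical point $\kappa>\nu_\P$, so weak cutpoints in the sense of the paper's terminology need not exist beyond $\kappa$. One also needs to be careful that the bottom-part coiteration actually lines $\P$ and $\Q$ up (one side may simply be an initial segment of the other rather than producing literally identical iterates), and that the lift of that coiteration through $\Sigma$ and $\Lambda$ respects the uniqueness clause (4) of Definition~\ref{generic generator def} at every countable stage. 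These are fixable bookkeeping issues, but they deserve explicit care; the genuine gap, as you say, is the limit-stage branch convergence in the absence of $Q$-structures.
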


\bibliographystyle{plain}
\bibliography{main}

\begin{thebibliography}{10}

\bibitem{FuchsTM}
Gunter Fuchs.
\newblock {{\(\lambda\)}}-structures and {{\(s\)}}-structures: translating the
  models.
\newblock {\em Ann. Pure Appl. Logic}, 162(4):257--317, 2011.

\bibitem{GappoSarg}
Takehiko Gappo and Grigor Sargsyan.
\newblock On the derived models of self-iterable universes.
\newblock {\em Proc. Am. Math. Soc.}, 150(3):1321--1329, 2022.

\bibitem{JSSS}
R.~B. Jensen, E.~Schimmerling, R.~Schindler, and J.~R. Steel.
\newblock Stacking mice.
\newblock {\em {J. Symb. Log.}}, 74(1):315--335, 2009.

\bibitem{JS13}
R.~B. Jensen and J.~R. Steel.
\newblock {K without the measurable}.
\newblock {\em J. Symb. Log.}, 78(3):708--734, 2013.

\bibitem{KKMW}
Alexander~S. Kechris, Eugene~M. Kleinberg, Yiannis~N. Moschovakis, and W.~Hugh
  Woodin.
\newblock The axiom of determinacy, strong partition properties and nonsingular
  measures.
\newblock In {\em Games, scales, and Suslin cardinals. The Cabal Seminar, Vol.
  I. Reprints of papers and new material based on the Los Angeles Caltech-UCLA
  Logic Cabal Seminar 1976--1985}, pages 333--354. Cambridge: Cambridge
  University Press; Urbana, IL: Association for Symbolic Logic (ASL), 2008.

\bibitem{LaSa21}
P.~B. Larson and G.~Sargsyan.
\newblock Failures of square in $\mathbb{P}_{\max}$ extensions of {C}hang
  models.
\newblock 2021, available at https://arxiv.org/abs/2105.00322.

\bibitem{Ma08}
D.~A. Martin.
\newblock {The largest countable this, that, and the other}.
\newblock In A.~S. Kechris, B.~Löwe, and J.~R. Steel, editors, {\em Games,
  Scales and Suslin Cardinals}. Cambridge University Press, 2008.

\bibitem{MSch95}
W.~J. Mitchell and E.~Schimmerling.
\newblock {Covering without countable closure}.
\newblock {\em Mathematical Research Letters}, 2(5):595--609, 1995.

\bibitem{CLUW}
W.~J. Mitchell, E.~Schimmerling, and J.~R. Steel.
\newblock The covering lemma up to a {Woodin} cardinal.
\newblock {\em Ann. Pure Appl. Logic}, 84(2):219--255, 1997.

\bibitem{FSIT}
W.~J. Mitchell and J.~R. Steel.
\newblock {\em Fine structure and iteration trees}, volume~3 of {\em Lec. Not.
  Log.}
\newblock Springer-Verlag, Berlin, New York, 1994.

\bibitem{MitSch}
William Mitchell and Ralf Schindler.
\newblock A universal extender model without large cardinals in {{\(V\)}}.
\newblock {\em J. Symb. Log.}, 69(2):371--386, 2004.

\bibitem{Mo09}
Y.~N. Moschovakis.
\newblock {\em Descriptive set theory, second edition}, volume 155 of {\em
  Mathematical Surveys and Monographs}.
\newblock AMS, 2009.

\bibitem{ESC}
Itay Neeman and John Steel.
\newblock Equiconsistencies at subcompact cardinals.
\newblock {\em Arch. Math. Logic}, 55(1-2):207--238, 2016.

\bibitem{DIMT}
G.~Sargsyan.
\newblock Descriptive inner model theory.
\newblock {\em Bull. Symbolic Logic}, 19(1):1--55, 2013.

\bibitem{HMMSC}
G.~Sargsyan.
\newblock {\em {Hod Mice and the Mouse Set Conjecture}}, volume 236 of {\em
  Memoirs of the Amer. Math. Soc.}
\newblock 2015.

\bibitem{CCM}
G.~Sargsyan.
\newblock Covering with {C}hang models over derived models.
\newblock {\em Adv. Math.}, 384:Paper No. 107717, 21, 2021.

\bibitem{CWLD}
G.~Sargsyan.
\newblock Comparison with least disagreements.
\newblock 2021, to appear.

\bibitem{ESSealing}
G.~Sargsyan and N.~D. Trang.
\newblock The exact strength of generic absoluteness for the universally
  {B}aire sets, 2021, available at https://arxiv.org/abs/2110.02725.

\bibitem{ADRUB}
Grigor Sargsyan.
\newblock {{\(AD_{\mathbb{R}}\)}} implies that all sets of reals are
  {{\(\Theta\)}} universally {Baire}.
\newblock {\em Arch. Math. Logic}, 60(1-2):1--15, 2021.

\bibitem{NegRes}
Grigor Sargsyan.
\newblock Negative results on precipitous ideals on {{\(\omega_1\)}}.
\newblock {\em J. Symb. Log.}, 88(2):490--509, 2023.

\bibitem{MSCR}
Grigor Sargsyan and John Steel.
\newblock The mouse set conjecture for sets of reals.
\newblock {\em J. Symb. Log.}, 80(2):671--683, 2015.

\bibitem{LSA}
Grigor Sargsyan and Nam Trang.
\newblock {\em The {Largest} {Suslin} {Axiom}}, volume~56 of {\em Lect. Notes
  Log.}
\newblock Cambridge: Cambridge University Press, 2024.

\bibitem{FarmBC}
Farmer Schlutzenberg.
\newblock Background construction for $\lambda$-indexed mice, 2021, available
  at https://arxiv.org/abs/2101.00889.

\bibitem{CMIP}
J.~R. Steel.
\newblock {\em {The core model iterability problem}}, volume~8 of {\em Lec.
  Not. Log.}
\newblock Springer-Verlag, Berlin, New York, 1996.

\bibitem{St05}
J.~R. Steel.
\newblock {PFA implies $\operatorname{AD}^{L(\mathbb{R})}$}.
\newblock {\em {J. Symb. Log.}}, 70(4):1255–1296, 2005.

\bibitem{DMATM}
J.~R. Steel.
\newblock {Derived models associated to mice}.
\newblock In {\em {Computational Prospects of Infinity - Part I: Tutorials}},
  volume~14, pages 105--193. World Scientific, 2008.

\bibitem{St09DMT}
J.~R. Steel.
\newblock {The derived model theorem}.
\newblock In S.~B. Cooper, H.~Geuvers, A.~Pillay, and J.~Väänänen, editors,
  {\em {Logic Colloquium 2006}}, page 280–327. 2009.

\bibitem{OIMT}
J.~R. Steel.
\newblock {\em {An Outline of Inner Model Theory}}, pages 1595--1684.
\newblock Springer, 2010.

\bibitem{MPSC}
J.~R. Steel.
\newblock Mouse pairs and {S}uslin cardinals, 2022, available at
  https://math.berkeley.edu/$\sim$steel/.

\bibitem{SteelCom}
John~R. Steel.
\newblock {\em A comparison process for mouse pairs}, volume~51 of {\em Lect.
  Notes Log.}
\newblock Cambridge: Cambridge University Press; Ithaca, NY: Association for
  Symbolic Logic (ASL), 2023.

\end{thebibliography}
\end{document}